\documentclass[10pt,fleqn]{article}
\usepackage[paperwidth=192mm,paperheight=262mm,left=13.5mm,right=13.5mm,
 top=19mm,bottom=18mm,headheight=12pt,headsep=7pt,footskip=18pt]{geometry}
\usepackage[T1]{fontenc}
\usepackage{amsmath,amssymb,amsfonts,mathtools,amsthm}
\usepackage{txfonts}
\usepackage{booktabs,array,enumitem,microtype,needspace,float,mathrsfs}
\usepackage[numbers,sort&compress]{natbib}
\usepackage{etoolbox,xcolor,url,fancyhdr}
\usepackage[hyperfootnotes=false,hypertexnames=false,hidelinks]{hyperref}
\hypersetup{bookmarksnumbered=true,bookmarksdepth=2,
 pdfauthor={Yangcheng Li},
 pdftitle={Descent, Seminormalization, and Conductors of Sparse Profile Images}}
\numberwithin{equation}{section}
\AtBeginEnvironment{theorem}{\Needspace{6\baselineskip}}
\AtBeginEnvironment{proposition}{\Needspace{5\baselineskip}}
\AtBeginEnvironment{corollary}{\Needspace{5\baselineskip}}
\AtBeginEnvironment{lemma}{\Needspace{5\baselineskip}}
\AtBeginEnvironment{example}{\Needspace{5\baselineskip}}
\newtheorem{theorem}{Theorem}[section]
\newtheorem{proposition}[theorem]{Proposition}
\newtheorem{lemma}[theorem]{Lemma}
\newtheorem{corollary}[theorem]{Corollary}

\newtheorem{remark}[theorem]{Remark}
\newtheorem{example}[theorem]{Example}

\newcommand{\PP}{\mathbb P}
\newcommand{\Gm}{\mathbb G_{\mathrm m}}

\newcommand{\Spec}{\operatorname{Spec}}
\newcommand{\rank}{\operatorname{rank}}
\newcommand{\Hilb}{\operatorname{Hilb}}

\newcommand{\Span}{\operatorname{span}}
\newcommand{\sym}{\operatorname{Sym}}

\newcommand{\Res}{\operatorname{Res}}

\begin{document}
\title{Descent, Seminormalization, and Conductors of Sparse Profile Images}
\author{Yangcheng Li\thanks{Corresponding author. Email: \texttt{liyc@m.scnu.edu.cn}.}\\[3pt]
\small School of Mathematical Sciences, South China Normal University\\
\small Guangzhou 510631, Guangdong, China}
\date{}
\maketitle
\begin{abstract}
We develop a descent theory for sparse factorization-profile images, asking
which functions on a normalization descend to the actual image. Whole-fiber
completions yield a Fourier-zero
classification for homogeneous masks, with a sharp stable converse, and a
Fourier-line classification for mixed aperiodic profiles in the stated marked
stable ranges. For homogeneous profiles, bounded integer relations determine
geometric fibers and seminormalization, while explicit semigroup models compute
actual image algebras, conductors, and depth in substantial families. For
singleton--complement profiles, exact one-jet conditions determine the
finite-output algebra, conductor, and projective descent through the first
nonnormal boundary; the local type is computed in the smooth-boundary range and
at the total-zero point of the first nonnormal layer. We also prove a saturated
root-of-unity specialization theorem and give separating examples showing that
normalization, natural polarizations, point fibers, seminormalization, and even
the source conductor need not determine the actual image.
\end{abstract}
\noindent\textit{Keywords:} sparse factorization; Fourier spectra; descent;
seminormalization; conductor; Krylov strata.

\noindent\textit{2020 MSC:} 14M05, 13F45, 13B22, 14B05.
\section{Introduction and main results}\label{sec:introduction}

This paper studies the descent problem for sparse factorization-profile
images.  The companion reconstruction paper \cite{LiProfileGeometry}
constructs the reduced profile branches and, for complete progressions,
identifies the normalizations of their images as products of projective
spaces with explicit polarizations.  Here that normalization is taken as
known.  The question is no longer which normal variety maps to the image, but
which functions on that normal variety actually descend to the image.

This distinction is essential.  A finite normalization can identify points,
fail to be immersive on individual branches, or carry additional
infinitesimal gluing invisible from the set-theoretic fibers.  Accordingly we
work with completed images over the entire normalization fiber rather than a
single selected branch.  The resulting embedded completed subalgebra records
both the reduced identifications and the infinitesimal conditions that the
actual coefficient functions satisfy.  The analysis passes through four layers of structure
\[
 \begin{gathered}
 \text{normalization and polarization}\\[-1pt]
 \Downarrow\\[-1pt]
 \text{geometric fibers and seminormalization}\\[-1pt]
 \Downarrow\\[-1pt]
 \text{completed whole-fiber image algebra}\\[-1pt]
 \Downarrow\\[-1pt]
 \text{conductor and defect modules}
 \end{gathered}
\]
Fourier invariants govern the normalization-side layers; semigroup and one-jet
calculations recover the actual descended algebras, from which conductors and
defect modules are extracted.  The separating examples show that the coarser
data do not, in general, recover the actual descended algebra.

For a homogeneous aperiodic mask \(S\subset C_d\), the monic image algebra is
generated by those power sums whose Fourier coefficients do not vanish.
Whole-fiber completion upgrades this elementary observation to a global
classification: equality of Fourier zero spectra gives the same image with
the normalization identified at every multiplicity, while a sharp stable
sampling bound gives the converse.  For mixed profiles the invariant is no
longer a zero set but a projective Fourier line at each frequency.  Equality
of these lines gives image isomorphisms compatible with the fixed
normalization; additional markings, such as a calibrated singleton or the
normalized relation map, recover progressively finer mask data.

Point fibers alone lead naturally to seminormalization.  In the homogeneous
setting, we show that bounded integer relations in the circulant mask matrix
determine exactly the geometric normalization fibers and therefore the
seminormal image.  The
actual ring contains more information.  Under a spectral condition it is a
semigroup boundary algebra plus the interior ideal, from which the conductor,
Hilbert polynomial, and local depth follow.  A separate finite-output
calculation treats the entire first singular interval for the mask
\(\{0,1\}\subset C_{2L}\).

The mixed singleton--complement family displays the infinitesimal descent most
sharply.  After dividing once by \(x-u\), membership in the actual output
algebra is governed by one transverse jet.  We prove finite free
presentations below the critical multiplicity and at the critical
multiplicity, globalize them through the full normalization fiber, compute the
projective conductor and the local Cohen--Macaulay type, and then pass through
the first multiplicity at which the reduced boundary itself becomes
nonnormal.  At that boundary the ambient image remains locally
Cohen--Macaulay, while the boundary acquires an explicit normalization defect
and the infinitesimal module ceases to be invertible along its nonnormal
locus.

Section~\ref{v7:sec:separations} tests how much normalization-side data
suffice to recover an image.  The examples show successive failures: mixed degrees need not
determine the normalization; the normalization with both hyperplane pullbacks
need not determine the image; all geometric point fibers and the
seminormalization need not determine conductor thickness; and even a fixed
source conductor need not determine the boundary subalgebra.  Thus the
completed whole-fiber image algebra is not merely a technical device but the
natural descent datum in the families treated here.

\subsection{Main theorem chain}

Section~\ref{v6:spec:homogeneous-section} develops the homogeneous completed
fiber model.  Theorem \ref{v6:spec:completed-model} identifies the completed
image as the closed algebra generated by exact moment tuples over the entire
normalization fiber.  Theorem \ref{v6:spec:homogeneous-classification} then
gives the Fourier-zero classification and its stable converse.

Section~\ref{v6:spec:mixed-section} replaces zero spectra by Fourier lines.
Theorem \ref{v6:spec:mixed-classification} gives the mixed marked
classification, and Theorem \ref{v6:spec:relation-rigidity} gives rigidity
when a singleton is present and the normalized relation map is fixed.

Section~\ref{v7:sec:descent} separates reduced gluing from infinitesimal
descent.  Theorem \ref{v7:descent:seminormal-classification} classifies the homogeneous
seminormal image by bounded integer relations.  Theorems \ref{v7:cond:semigroup}
and \ref{v7:cond:first-interval} compute two substantial homogeneous
families explicitly.

Section~\ref{v7:sec:mixed-trace} is the mixed one-jet core.  Theorems
\ref{v7:mixed:affine} and \ref{v8:mixed:critical} give the affine descent
algebras, Theorems \ref{v7:mixed:projective} and
\ref{v8r11:mixed:projective-first} globalize the conductor picture, and
Theorem \ref{v7:mixed:type} computes the local type in the smooth-boundary
range.  Theorem \ref{v8r9:mixed:first-excluded} supplies the finite-output
certificate at the first nonnormal boundary.

\subsection{Relation with preceding work and novelty boundaries}

The two earlier papers \cite{LiYuanOrbits,LiYuanSchur} use the same
remainder and Schur--Pl\"ucker coordinates to study the
rational-normal-curve/GRS locus and the MDS open set.  The companion
reconstruction paper \cite{LiProfileGeometry} moves to the closed rank loci:
it reconstructs the defining linear system, resolves the arithmetic profile
structure, and determines the relevant normalizations and polarizations.
The present paper begins after that normalization step.  Its subject is the
actual descended image algebra and the information lost when one passes from
that algebra to successively coarser normalization-side data.

Generalized power sums and root-string restriction algebras have direct
precedents in
\cite{BrooknerCorwinEtingofSam,EtingofRains,SergeevVeselovMR,KasataniMultiwheel},
and root-of-unity regularization has substantial existing theory
\cite{FJMMTUnity,CherednikNonsemisimple}.  Under the parameter dictionary of
Appendix~\ref{r17:mixed:comparisons}, the nonroot, nonresonant
difference-kernel description is due to Sergeev--Veselov, as recorded in
\cite[Proposition~3.5(i)]{EtingofRains}; the generic quasi-invariance
mechanism is therefore not claimed as new.  Reduced-relation descent and
conductor pinching are classical \cite{Manaresi,Ferrand}.

The contributions emphasized here occur at the prescribed root-of-unity and
actual-image level.  They include whole-fiber completed image algebras;
exact marked Fourier classifications; the bounded-relation description of
homogeneous seminormal images; finite-output generation of the prescribed
root-of-unity algebras with explicit membership conditions and free bases in
the proved ranges; one-jet descent; projective conductors and defect modules;
and separating examples showing that point fibers, seminormalization, and
even the source conductor can be too coarse.  Appendix~\ref{r17:mixed:comparisons}
records the detailed parameter identifications and the boundary of the
generic and ambient precedents; the finite-output and whole-fiber arguments
themselves do not depend on those comparisons.

\subsection{Organization and imported inputs}

Section~\ref{sec:normalization-input} records the small imported
normalization package from \cite{LiProfileGeometry} and derives the
homogeneous Fourier threshold that starts the descent theory.  Sections
\ref{v6:spec:homogeneous-section} and \ref{v6:spec:mixed-section} treat
homogeneous and mixed Fourier descent.  Section~\ref{v7:sec:descent}
studies seminormalization and homogeneous conductor models.  Section
\ref{v7:sec:mixed-trace} develops one-jet descent.  Section
\ref{v7:sec:separations} gives the separating examples, and Section
\ref{sec:outlook-b} records the precise proved ranges and open extensions.
The two appendices contain the squarefree linear models and the parameter
comparisons used by the one-jet theory.
\section{Normalization input and Fourier thresholds}\label{sec:normalization-input}
We work over an algebraically closed field of characteristic zero whenever
Fourier masks are used.

\paragraph{Imported normalization package.}
Only the following results are imported from the companion reconstruction
paper.  First, \cite[Proposition~5.1]{LiProfileGeometry} classifies the normal
relation covers.  Second, \cite[Theorem~5.4]{LiProfileGeometry} gives, for
complete progressions, the product normalizations of the relation branches
and their images together with the two natural polarizations.  Third,
\cite[Theorems~6.1--6.2]{LiProfileGeometry} gives the monic and projective
normality criteria used below.  No completed-image, seminormalization,
image-ring, conductor, or other descent result is imported.

Concretely, for an occupied necklace type \(\nu\) with multiplicity
\(n_\nu\), weight \(w_\nu\), and rotation-stabilizer order \(h_\nu\), the
image normalization has a factor \(\PP^{n_\nu}\); if \(H_\nu\) is its
hyperplane class, the two natural pullbacks are
\[
 \eta=\sum_\nu \frac{w_\nu}{h_\nu}H_\nu,
 \qquad
 \xi=\sum_\nu \frac{d}{h_\nu}H_\nu.
\]
The normality criteria are expressed in terms of the weighted generator
degrees, the relevant Fourier coefficients, and the endpoint allocation.
We use this package as input and do not repeat its proofs.

\subsection{The first vanishing Fourier coefficient}
A profile is \emph{homogeneous} if all its occupied blocks have the same
necklace.  For an aperiodic nonempty necklace \(S\subseteq\mathbb Z/d\mathbb
Z\), repeated \(m\) times, write \(Y_{S,m}\) for its projective image after
removing the fixed factor, and put
\[
 w=|S|,\qquad \lambda_j=\sum_{b\in S}\zeta^{bj},\qquad
 M_S(T)=\sum_{b\in S}T^b.
\]
\begin{corollary}[Fourier and cyclotomic thresholds]
\label{v6:norm:homogeneous-threshold}
The normalization of $Y_{S,m}$ is $\PP^m$, with
$\eta=wH$ and $\xi=dH$, and its complete vector of mixed degrees is
\begin{equation}\label{v6:norm:homogeneous-degrees}
 \delta_j=d^{m-j}w^j\qquad(0\le j\le m).
\end{equation}
Moreover,
\begin{equation}\label{v6:norm:homogeneous-normality}
 Y_{S,m}\text{ is normal}\quad\Longleftrightarrow\quad
 \lambda_1\cdots\lambda_m\ne0
 \quad\Longleftrightarrow\quad Y_{S,m}\simeq\PP^m.
\end{equation}
Let
\[
 \mathcal Q_S=
 \{q:q\mid d,\ q>1,\ \Phi_q(T)\mid M_S(T)\}.
\]
If $\mathcal Q_S$ is empty, every admissible $m$ gives a smooth
image.  Otherwise, with $q_{\max}=\max\mathcal Q_S$,
\begin{equation}\label{v6:norm:cyclotomic-threshold}
 Y_{S,m}\text{ is normal}\quad\Longleftrightarrow\quad
 m<d/q_{\max}.
\end{equation}
\end{corollary}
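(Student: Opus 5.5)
The plan is to specialize the imported normalization package to a single occupied necklace type and then reduce the normality statement to cyclotomic arithmetic. With one type $\nu=S$ of multiplicity $m$ and weight $w=|S|$, aperiodicity gives rotation stabilizer $h_\nu=1$. The product normalization of \cite[Theorem~5.4]{LiProfileGeometry} then has the single factor $\PP^m$ once the fixed factor is removed. The displayed formulas for the two pullbacks give $\eta=wH$ and $\xi=dH$.

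The mixed degrees follow by intersection theory on $\PP^m$. Interpreting $\delta_j$ as $\int_{\PP^m}\xi^{m-j}\eta^j$ (the normalization map is birational onto $Y_{S,m}$, so degrees are computed upstairs), we get $\delta_j=d^{m-j}w^jH^m=d^{m-j}w^j$. This gives \eqref{v6:norm:homogeneous-degrees}.

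For \eqref{v6:norm:homogeneous-normality} I would invoke the monic and projective normality criteria \cite[Theorems~6.1--6.2]{LiProfileGeometry} in the homogeneous case. There the weighted generators are the power sums of degrees $1,\dots,m$ weighted by the Fourier coefficients $\lambda_j$. The criterion should say that the image is normal exactly when all of $p_1,\dots,p_m$ appear, i.e.\ when $\lambda_1\cdots\lambda_m\neq0$; in that case the image coordinate ring is the full polynomial ring and the image is $\PP^m$ itself. Conversely, if $Y_{S,m}\simeq\PP^m$ it is certainly normal, which closes the chain of equivalences. The one point to check carefully is that in the homogeneous case the endpoint-allocation condition in the imported criterion is vacuous, or is automatically satisfied; this is the step I expect to need the most care.

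The cyclotomic threshold is elementary. Since $\zeta$ is a primitive $d$-th root of unity, $\lambda_j=M_S(\zeta^j)$, and $\zeta^j$ is a primitive $q$-th root of unity with $q=d/\gcd(d,j)$. Because $M_S$ has rational coefficients, $M_S(\zeta^j)=0$ if and only if $\Phi_q\mid M_S$. Note that $q=1$ never occurs for $1\le j<d$, and in any case $M_S(1)=w\ne0$. Hence $\lambda_j=0$ for some $1\le j\le m$ exactly when some $q\in\mathcal Q_S$ equals $d/\gcd(d,j)$ for such a $j$. For a given $q\mid d$, the smallest $j\ge1$ with $d/\gcd(d,j)=q$ is $j=d/q$. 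So the first vanishing index is $\min_{q\in\mathcal Q_S}d/q=d/q_{\max}$, and normality holds if and only if $m<d/q_{\max}$. If $\mathcal Q_S$ is empty, no $\lambda_j$ with $j<d$ vanishes, so every admissible $m$ gives $Y_{S,m}\simeq\PP^m$, which is smooth.
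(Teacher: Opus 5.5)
Your proposal follows the paper's proof essentially step for step: \cite[Theorem~5.4]{LiProfileGeometry} gives the normalization $\PP^m$ with $\eta=wH$, $\xi=dH$; integration on $\PP^m$ gives the mixed degrees; the imported criteria give the normality equivalence; and the order-of-$\zeta^j$ cyclotomic argument gives the threshold $d/q_{\max}$. The endpoint check you flag as delicate is immediate: with a single occupied type, the paper notes that condition (A) of the criterion is automatic and the endpoint-allocation condition (C) is just injectivity of $k\mapsto wk$, which holds because $w>0$.
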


\begin{proof}
The normalization and polarizations follow from \cite[Theorem~5.4]{LiProfileGeometry}, and integration on $\PP^m$ gives
\eqref{v6:norm:homogeneous-degrees}.  Empty blocks would preserve the image but add relation factors and
change its incidence vector.  With one occupied type, (A) is automatic
and (C) is injectivity of $k\mapsto wk$.
Condition (B) is precisely the middle assertion in
\eqref{v6:norm:homogeneous-normality}.  The projective criterion
proves that equivalence.

The order of $\zeta^j$ is $d/\gcd(d,j)$.  Since $M_S$ has rational
coefficients, its vanishing at $\zeta^j$ is equivalent to
divisibility by the corresponding cyclotomic polynomial.  It never
vanishes at $1$, because $M_S(1)=w>0$.  For a fixed divisor $q>1$
the least positive index with $\zeta^j$ of order $q$ is $d/q$.
The first positive vanishing index is therefore
$\min_{q\in\mathcal Q_S}d/q=d/q_{\max}$, proving
\eqref{v6:norm:cyclotomic-threshold}.
\end{proof}

\begin{remark}\label{v6:norm:periodic-reduction}
If $S$ has a stabilizer of order $h>1$, it is a union of the orbits
of that subgroup and descends to an aperiodic necklace of length
$d/h$.  On the quotient root parameter the relevant Fourier
coefficients are $\lambda_h,\lambda_{2h},\ldots,\lambda_{mh}$.
Writing $D=d/h$, $S=T+D\{0,\ldots,h-1\}$ and
$\theta=\zeta^h$, we have
$\lambda_{hj}=h\sum_{t\in T}\theta^{tj}$.
The cyclotomic set in the threshold is computed from $T$
at length $D$.
Thus the same threshold statement applies after this reduction,
with polarizations $(w/h)H$ and $(d/h)H$.
\end{remark}
\section{Descent I: completed fibers and Fourier zero spectra}
\label{v6:spec:homogeneous-section}

The first descent problem is to determine what the normalization map itself remembers.  Completed images over entire normalization fibers give the Fourier criterion needed for that comparison.  Throughout this section and the next,
$K$ is algebraically closed of characteristic zero, $d\ge2$, and
$\zeta$ is a fixed primitive $d$th root.  All images are reduced.

\subsection{The homogeneous profile and its monic algebra}

Let $S\subset C_d=\mathbb Z/d\mathbb Z$ be nonempty and aperiodic, meaning
that no nonidentity translation fixes $S$, and set $w=|S|$.  The profile
consisting of $m$ copies of this necklace has normalization map
\begin{equation}\label{v6:spec:homogeneous-map}
 \nu_S:\sym^m\PP^1=\PP^m\longrightarrow Y_{S,m},\qquad
 \{[u_i:v_i]\}_{i=1}^m\longmapsto
 \left[\prod_{i=1}^m\prod_{a\in S}(v_iX-\zeta^a u_iZ)\right].
\end{equation}
The ample pullback $\mathcal O_{\PP^m}(w)$ gives finiteness;
aperiodicity recovers the general input from its nonzero $\mu_d$-orbits
and gives birationality, as in \cite[Theorem~5.4]{LiProfileGeometry}, for
every $m\ge1$.  The sparse Krylov interpretation additionally
requires $I=d\{0,\ldots,m\}$, $r=mw$, and $m+1\le mw<dm$;
those inequalities are not imposed below.  Restoring a fixed factor
$X^e$ only gives a linear embedding of the image.

Write
\begin{equation}\label{v6:spec:fourier-coefficients}
 \lambda_j(S)=\sum_{a\in S}\zeta^{aj},\qquad
 Z(S)=\{j\in C_d:\lambda_j(S)=0\}.
\end{equation}
On the monic chart the normalization ring is
$B_m=K[e_1,\ldots,e_m]=K[u_1,\ldots,u_m]^{\mathfrak S_m}$.
Put $p_j=\sum_i u_i^j$.

\begin{proposition}[The monic subalgebra]\label{v6:spec:monic-algebra}
Inside the fixed normalization ring $B_m$, the coordinate ring of the monic
image is
\begin{equation}\label{v6:spec:monic-ring}
 A_{S,m}=K[p_j:j\ge1,\ \lambda_j(S)\ne0].
\end{equation}
Consequently, equal zero spectra give equal monic image subalgebras, conductor
ideals, and normalization quotient modules under this identification.
\end{proposition}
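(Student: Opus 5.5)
On the monic chart $v_i=1$, set $Z=1$. The image point is the polynomial
\[
F(X)=\prod_{i=1}^m\prod_{a\in S}(X-\zeta^a u_i),
\]
and the coordinate ring of the monic image is, by definition, the $K$-subalgebra of $B_m$ generated by the coefficients of $F$. The plan is to show that these coefficients generate the same algebra as the power sums $P_k$ of the $mw$ roots $\{\zeta^a u_i\}$, and then to compute those power sums in terms of the $p_j$.

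\textbf{Step 1: coefficients versus power sums.} In characteristic zero, Newton's identities express the power sums $P_1,\dots,P_{mw}$ of the roots as polynomials in the elementary symmetric functions of the roots. Conversely, they express the elementary symmetric functions as polynomials in the power sums, since one may divide by $1,\dots,mw$. The elementary symmetric functions are the coefficients of $F$ up to sign. Hence the coordinate ring equals $K[P_1,\dots,P_{mw}]$. For $k>mw$, Newton's identities also show that $P_k$ already lies in this algebra, so the coordinate ring equals $K[P_k:k\ge1]$.

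\textbf{Step 2: compute the power sums.} Directly,
\[
P_k=\sum_i\sum_{a\in S}\zeta^{ak}u_i^k=\lambda_k(S)\,p_k .
\]
So $P_k$ is zero when $\lambda_k(S)=0$ and is a nonzero scalar multiple of $p_k$ otherwise. This gives \eqref{v6:spec:monic-ring}, with $\lambda_k$ depending only on $k \bmod d$.

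\textbf{Step 3: the finiteness subtlety.} The generator set is infinite, and it contains $p_j$ for every $j\ge 1$ with $\lambda_j\ne0$, not only for $j\le m$. The point to check is that Step 1 allows all $k\ge1$. This holds because the algebra generated by the coefficients is closed under forming $P_k$ for every $k$. Nothing more is needed for \eqref{v6:spec:monic-ring}. One should still note that $p_j$ with $j>m$ need not lie in $K[p_j: j\le m,\ \lambda_j\ne0]$, so the full index range must be kept. I also have to confirm that the monic chart of the reduced image is exactly $\operatorname{Spec}$ of this coefficient subalgebra inside $B_m$. This follows because the image is reduced and the map $\nu_S$ is finite and birational, so the image ring embeds in the normalization ring $B_m$. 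I expect this identification of the embedded ring to be the only point needing care.

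\textbf{Consequence.} The right-hand side of \eqref{v6:spec:monic-ring} depends only on the set $Z(S)$, viewed as a set of residues mod $d$. Suppose $Z(S)=Z(S')$ with $|S|$ and $|S'|$ arbitrary. Then $A_{S,m}=A_{S',m}$ as subrings of the same $B_m$. The conductor $\{b\in B_m: bB_m\subseteq A\}$ and the quotient module $B_m/A$ are defined purely from the pair $A\subseteq B_m$, so they coincide as well.
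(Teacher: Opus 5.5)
Your proposal is correct and is essentially the paper's argument: Newton's identities in characteristic zero identify the coefficient algebra with $K[Q_1,\ldots,Q_{mw}]$, the output recurrence supplies all higher $Q_j$, and $Q_j=\lambda_j(S)p_j$ lets you discard the nonzero scalars, after which the conductor and quotient depend only on the inclusion in $B_m$. One small correction: the ring of the monic image is the subalgebra of $B_m$ generated by the coefficients because the map on the monic chart is finite, so its reduced image is closed; birationality is needed only to identify $B_m$ as the normalization.
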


\begin{proof}
The output power sums are $Q_j=\lambda_j(S)p_j$.  Newton's identities
give its coefficient algebra $K[Q_1,\ldots,Q_{mw}]$; the output
polynomial recurrence puts all higher $Q_j$ in the same algebra.
Removing the nonzero scalars $\lambda_j(S)$ proves
\eqref{v6:spec:monic-ring}.  The conductor and quotient depend on
this inclusion in $B_m$.
\end{proof}

The projective calculation must also include fibers meeting both zero
and infinity.

\subsection{All branches of the completed image}

Let
\begin{equation}\label{v6:spec:circulant}
 C_S=(1_S(a-c))_{a,c\in C_d}.
\end{equation}
\emph{Point fibers.}
For an output form $G$, record the multiplicities $wn_0$ and $wn_\infty$
of its roots at zero and infinity.  For each nonzero $\mu_d$-orbit appearing
among its roots, choose a representative $v$ and write
$b_a=\operatorname{mult}_{\zeta^a v}(G)$.  The possible input multiplicities
on this orbit form the finite set
\begin{equation}\label{v6:spec:feasible-fiber}
 \mathcal F_S(b)=\{(n_c)_{c\in C_d}\in\mathbb Z_{\ge0}^d:C_Sn=b\}.
\end{equation}
As a set, the normalization fiber is the product of these
sets over the nonzero orbits; every input divisor also contains
the fixed endpoint part $n_0[0]+n_\infty[\infty]$.
Indeed, root multiplicities add as in
\eqref{v6:spec:homogeneous-map}; $\sum_a b_a=w\sum_c n_c$ recovers
the input size, and $w>0$ makes every occupied orbit visible.

\emph{Exact moments on each configuration.}
For a chosen feasible configuration, write the input roots formally as
$u_{c,i}=\zeta^cv(1+t_{c,i})$, $1\le i\le n_c$, and put
$p_{c,j}=\sum_i t_{c,i}^j$.  If $q_{a,j}$ is the $j$th power sum of the
relative deviations of the output roots in the cluster at $\zeta^av$, then
\begin{equation}\label{v6:spec:exact-moments}
 q_{a,j}=\sum_c(C_S)_{a,c}p_{c,j}\qquad(j\ge1).
\end{equation}
At zero and infinity the exact formulas are
\begin{equation}\label{v6:spec:endpoint-moments}
 Q_j^{(0)}=\lambda_j(S)p_j^{(0)},\qquad
 Q_j^{(\infty)}=\lambda_{-j}(S)p_j^{(\infty)},
\end{equation}
where reciprocal parameters are used at infinity.

\begin{theorem}[Completed image model]\label{v6:spec:completed-model}
Let $G$ be a closed point of $Y_{S,m}$, let $x_1,\ldots,x_h$ be all
points of $\nu_S^{-1}(G)$, and put
$B_i=\widehat{\mathcal O}_{\PP^m,x_i}$.  In the product
$\prod_{i=1}^h B_i$, interpret
\eqref{v6:spec:exact-moments}--\eqref{v6:spec:endpoint-moments} as tuples,
one entry for every feasible configuration in the fiber.  Then
$\widehat{\mathcal O}_{Y_{S,m},G}$ is the closed subalgebra generated by
these moment tuples.

Equivalently, let $R=\widehat{\mathcal O}_{\PP^{mw},G}$ and let $I_i$ be
the kernel of the coefficient map $R\longrightarrow B_i$, expressed by
these formulas after formal factorization into root clusters.  Then
\begin{equation}\label{v6:spec:kernel-intersection}
 \widehat{\mathcal O}_{Y_{S,m},G}=R\big/\bigcap_{i=1}^h I_i.
\end{equation}
\end{theorem}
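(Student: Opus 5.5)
The plan is to reduce to the standard description of the completed local ring of the image of a finite map in terms of completions along the fiber, and then make the coefficient map explicit through formal factorization into root clusters.

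\emph{Step 1: finite-map formalism.} Since $\nu_S$ is finite and $\PP^m$ is reduced, $\mathcal O_{Y_{S,m}}\to\nu_{S*}\mathcal O_{\PP^m}$ is injective; here $Y_{S,m}$ is the reduced scheme-theoretic image. Completion at $G$ is exact on finite modules, and for a finite morphism
\[
 (\nu_{S*}\mathcal O_{\PP^m})^\wedge_G\simeq\prod_{i=1}^h B_i .
\]
Hence $\widehat{\mathcal O}_{Y_{S,m},G}$ embeds in $\prod_i B_i$, and its image is the closure of the image of $\mathcal O_{Y_{S,m},G}$, which is a quotient of $\mathcal O_{\PP^{mw},G}$.

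\emph{Step 2: the kernel.} Completing the surjection $\mathcal O_{\PP^{mw},G}\to\mathcal O_{Y,G}$ gives $R\to\widehat{\mathcal O}_{Y,G}$, still surjective, and the composite $R\to\prod_iB_i$ is the completed coefficient map. Its kernel is $\bigcap_i I_i$, which gives \eqref{v6:spec:kernel-intersection}. The one point to check is that this kernel is the completion of the kernel before completion. This follows from flatness of completion: the kernel of $\mathcal O_{\PP^{mw},G}\to(\nu_*\mathcal O)_G$ completes to the kernel of the completed map.

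\emph{Step 3: identifying generators.} This is the main obstacle. By Hensel's lemma, $G$ factors over $R$ uniquely into monic factors, one for each root cluster at $\zeta^av$, $0$, or $\infty$. The coefficients of each cluster factor are formal functions on $R$, and $R$ is the power-series ring in the deviation coordinates of all clusters.

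The relative power sums $q_{a,j}$ for $j\le b_a$, together with the endpoint power sums, are then a regular system of parameters of $R$ in characteristic zero, by Newton's identities applied cluster by cluster. Each $x_i$ corresponds to a feasible configuration $n\in\mathcal F_S(b)$. By the multiplicativity in \eqref{v6:spec:homogeneous-map}, the map $R\to B_i$ sends each output cluster coordinate to the corresponding combination \eqref{v6:spec:exact-moments}--\eqref{v6:spec:endpoint-moments} of the input cluster moments. The input moments $p_{c,j}$ generate $B_i$ topologically: it is the completed symmetric-power local ring, a product over $c$ of the power series in the symmetric functions of the $t_{c,i}$.

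The image of $R$ is therefore the closed subalgebra topologically generated by the images of these parameters, namely the moment tuples. The higher $q_{a,j}$ with $j>b_a$ lie in the closed algebra generated by the lower ones, again by Newton's identities, so including all $j\ge1$ as in the statement changes nothing.

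To finish, I will verify that the power-sum expansions are exactly the ones stated, i.e.\ the output deviation at $\zeta^av$ coming from input cluster $c$ is $t$ itself, up to the exact reparametrization. The check is that $\zeta^{a'}\zeta^cv(1+t)$ lands at $\zeta^av(1+t)$ when $a'+c=a$. The endpoint formulas use $\sum_{a\in S}(\zeta^au)^j=\lambda_j p_j$ at $0$, and reciprocal coordinates give $\lambda_{-j}$ at $\infty$.
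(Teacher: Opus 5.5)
Your proposal is correct and takes essentially the same route as the paper. Finiteness and flat completion embed $\widehat{\mathcal O}_{Y_{S,m},G}$ in $\prod_i B_i$ with kernel $\bigcap_i I_i$, and Hensel factorization into root clusters together with Newton's identities identifies the images of the regular parameters of $R$ with the exact moment tuples. You leave implicit two points the paper states: the image is closed because the cokernel is a finite, hence separated, module over the complete local ring $\widehat{\mathcal O}_{Y_{S,m},G}$, and the $\mathfrak m_G$-adic topology on $\prod_iB_i$ agrees with the product topology because the closed fiber is Artinian.
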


\begin{proof}
\emph{Root-cluster coordinates.}
Use a local affine chart at each output root and the reciprocal chart at
infinity.  Distinct root factors are coprime, so Hensel factorization
identifies the completed ambient coefficient ring with the completed
tensor product of their cluster coefficient rings.  For a multiplicity-$b$ cluster, Newton's identities and polynomial
recurrence replace its coefficients by the first $b$ moments and
generate all higher moments.  Multiplication gives the exact identities
\eqref{v6:spec:exact-moments}--\eqref{v6:spec:endpoint-moments}.
Each $B_i$ is the power series ring in the elementary symmetric
functions of the input deviations at $x_i$.

\emph{The entire fiber.}
Put $A_G=\mathcal O_{Y_{S,m},G}$ and
$\widetilde A_G=((\nu_S)_*\mathcal O_{\PP^m})_G$.  Finite normalization
and flat completion give
\[
 \widehat A_G\lhook\joinrel\longrightarrow
 \widetilde A_G\otimes_{A_G}\widehat A_G
 \simeq\prod_i B_i
\]
\cite[Tags~00MA and~07N9]{Stacks}.  Its image is closed: the quotient
is a finite module over the complete local ring $\widehat A_G$ and
is separated.  The topology on the finite algebra is also the product
of the maximal-ideal topologies, since its closed fiber is Artinian.
The ambient coefficient map thus has kernel $\bigcap_i I_i$; the exact
cluster moments topologically generate its image.  Repeated roots
remain inside their clusters, proving both descriptions without a
squarefree restriction.
\end{proof}

\subsection{Local tests for the normalization map}
\label{r20:spec:local-tests}
The same model determines differential rank, nonnormal support,
and, for immersive branches, the conductor.

\begin{proposition}[Differential rank]\label{v6:spec:differential-rank}
For a feasible nonzero-orbit configuration $n$, set
$J_j(n)=\{c:n_c\ge j\}$.  Its contribution to the rank of the differential
of $\nu_S$ is
\begin{equation}\label{v6:spec:rank-formula}
 \sum_{j\ge1}\rank C_S[:,J_j(n)].
\end{equation}
The zero and infinity contributions are respectively
\begin{equation}\label{v6:spec:endpoint-ranks}
 \#\{1\le j\le n_0:\lambda_j(S)\ne0\},\qquad
 \#\{1\le j\le n_\infty:\lambda_{-j}(S)\ne0\}.
\end{equation}
The nonzero-orbit map is immersive exactly when the active columns
$C_S[:,\operatorname{supp}n]$ are linearly independent.
\end{proposition}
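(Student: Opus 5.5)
The plan is to compute the differential of $\nu_S$ at a point $x_i$ of the fiber directly from the root-cluster coordinates of Theorem~\ref{v6:spec:completed-model}. Hensel factorization splits both the ambient completed ring and $B_i$ as completed tensor products over the nonzero orbits and the two endpoints, so the rank of the differential is the sum of the ranks of the separate cluster maps. It therefore suffices to treat one nonzero orbit with configuration $n$, and then the two endpoints.

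\emph{Nonzero orbit.} For each $c$, I will take as coordinates on the input side the first $n_c$ power sums $p_{c,1},\ldots,p_{c,n_c}$ of the deviations $t_{c,i}$. In characteristic zero these are regular parameters of the completed symmetric-power ring at the origin. On the output side, the coordinates of the cluster at $\zeta^a v$ are $q_{a,1},\ldots,q_{a,b_a}$, and every higher $q_{a,j}$ lies in the maximal ideal. I then examine \eqref{v6:spec:exact-moments} modulo $\mathfrak m^2$. For $j\le n_c$, the function $p_{c,j}$ is a coordinate, hence linear. For $j>n_c$, Newton's identities make $p_{c,j}$ a polynomial without linear term in $p_{c,1},\ldots,p_{c,n_c}$, so it lies in $\mathfrak m^2$. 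Hence the linear part of $q_{a,j}$ is
\[
\sum_{c\in J_j(n)}(C_S)_{a,c}\,dp_{c,j}.
\]
The same argument applied to the output cluster, using $b_a$ in place of $n_c$, shows that the cotangent image is spanned by the $dq_{a,j}$ with $j\le b_a$. If $n_c\ge j$ and $(C_S)_{a,c}\neq 0$, then $b_a\ge n_c\ge j$, so no linear part is lost by this truncation. The cotangent map is therefore block diagonal in $j$. The $j$th block sends $(dp_{c,j})_{c\in J_j(n)}$ through the matrix $C_S[:,J_j(n)]$, and summing the block ranks gives \eqref{v6:spec:rank-formula}.

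\emph{Endpoints.} The same computation at zero uses \eqref{v6:spec:endpoint-moments}: for $j\le n_0$, the $j$th block is multiplication by $\lambda_j(S)$. At infinity the block is multiplication by $\lambda_{-j}(S)$, using the reciprocal parameters.

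\emph{Immersivity.} The sets $J_j(n)$ are nested, decreasing in $j$, and $J_1(n)=\operatorname{supp}n$. The map is immersive exactly when each block has full column rank $|J_j(n)|$. Since every $C_S[:,J_j(n)]$ is a column subset of $C_S[:,J_1(n)]$, full column rank for $J_1$ implies it for all $j$. This gives the stated criterion.

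The main obstacle is the truncation bookkeeping: one has to verify that the cotangent space of the output cluster is spanned by exactly $dq_{a,1},\ldots,dq_{a,b_a}$ and that no linear contribution is dropped. This is settled by the inequality $b_a\ge n_c$ whenever $(C_S)_{a,c}\ne0$.
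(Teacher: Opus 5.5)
Your proposal is correct and follows essentially the same route as the paper. Like the paper, you linearize the exact moment identities \eqref{v6:spec:exact-moments} modulo $\mathfrak m^2$, split the cotangent map by moment order $j$ into the blocks $C_S[:,J_j(n)]$, use $b_a\ge n_c$ to show that output clusters with fewer than $j$ roots contribute only zero rows, and obtain the immersivity criterion from the nesting $J_j(n)\subseteq J_1(n)=\operatorname{supp}n$. The differences are minor: you take the power sums $p_{c,1},\dots,p_{c,n_c}$ directly as source coordinates, whereas the paper uses $e_{c,j}$ together with $p_{c,j}\equiv(-1)^{j-1}je_{c,j}$. The paper also adds a remark, not needed for the statement, that a constant left inverse combined with Newton's identities gives formal immersivity directly.
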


\begin{proof}
If $e_{c,j}$ denotes the $j$th elementary symmetric function in the
$n_c$ deviations, then modulo the square of the source maximal ideal
\[
 p_{c,j}\equiv(-1)^{j-1}j e_{c,j}\quad(j\le n_c),
 \qquad p_{c,j}\equiv0\quad(j>n_c).
\]
Different $j$ use disjoint cotangent variables.  Formula
\eqref{v6:spec:exact-moments} hence gives the direct sum of the displayed
matrix blocks.  If an output cluster has fewer than $j$ roots, its row in
the indicated submatrix is zero: a contributing column would force that
cluster to contain at least $n_c\ge j$ roots.  Thus no extraneous output
coordinate is being counted.  The endpoint argument is identical.
Dependence of the active columns already causes a rank loss for $j=1$;
independence implies independence for every smaller column set $J_j(n)$.
Moreover, a constant left inverse of the active-column matrix recovers
every $p_{c,j}$ from all $q_{a,j}$, and Newton's identities recover all
source coordinates.  This also proves formal immersivity directly.
\end{proof}

Let $\mathfrak c$ denote the conductor of
$\mathcal O_{Y_{S,m}}\subset(\nu_S)_*\mathcal O_{\PP^m}$, regarded
also as an ideal on the normalization.  Put
$\tau(S)=\min\{j\ge1:\lambda_j(S)=0\}$, with value $\infty$ if the
set is empty.  Since the mask polynomial has integer coefficients,
$\lambda_j(S)$ and $\lambda_{-j}(S)$ are Galois conjugate, so they have
the same vanishing behavior.

\begin{corollary}[The nonnormal locus]\label{v6:spec:conductor-locus}
The point $G$ belongs to the nonnormal locus $V(\mathfrak c)$ if and only
if at least one of the following occurs:
\begin{enumerate}[label=\textup{(\roman*)}]
\item $n_0\ge\tau(S)$ or $n_\infty\ge\tau(S)$;
\item for some nonzero orbit, $\mathcal F_S(b)$ contains at least two
elements;
\item some feasible configuration has linearly dependent active columns
of $C_S$.
\end{enumerate}
\end{corollary}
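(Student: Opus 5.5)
The plan is to use the standard criterion for a finite birational map from a normal variety. The point $G$ lies outside $V(\mathfrak c)$ exactly when $\mathcal O_{Y_{S,m},G}\to\widetilde A_G$ is an isomorphism. After completion, and using the embedding $\widehat A_G\hookrightarrow\prod_i B_i$ from Theorem~\ref{v6:spec:completed-model}, this says that the completed image equals the whole product. That equality in turn holds exactly when two things are true: the fiber $\nu_S^{-1}(G)$ is a single point $x$, and the induced map $\widehat{\mathcal O}_{Y,G}\to B_x$ is surjective. Since the target is a complete local ring and the source is complete, surjectivity is equivalent to surjectivity on cotangent spaces, i.e.\ formal immersivity at $x$. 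So I would prove: $G\notin V(\mathfrak c)$ iff the fiber is one point and $\nu_S$ is immersive there. Then I would translate each condition into (i)--(iii).

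\emph{Fiber cardinality.} By the point-fiber description preceding Theorem~\ref{v6:spec:completed-model}, the fiber is the product over nonzero orbits of the sets $\mathcal F_S(b)$, times the fixed endpoint part. It is therefore a singleton exactly when no $\mathcal F_S(b)$ has two elements. Each $\mathcal F_S(b)$ is nonempty because $G$ is in the image. Hence a multi-point fiber is exactly condition (ii). In that case $\prod_i B_i$ has at least two idempotents while the local ring $\widehat A_G$ has none besides $0,1$, so $G$ is nonnormal.

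\emph{Immersivity at a singleton fiber.} When the fiber is a single configuration, the cotangent map decomposes into the nonzero-orbit blocks and the two endpoint blocks, because Hensel factorization makes the ambient ring a completed tensor product over the clusters. Proposition~\ref{v6:spec:differential-rank} handles both kinds of block.
\begin{itemize}
\item On a nonzero orbit, the block has full rank $\sum_j\#J_j(n)$ iff the active columns are independent. Failure of this is condition (iii).
\item At zero, the block has full rank $n_0$ iff $\lambda_j(S)\ne0$ for all $1\le j\le n_0$, i.e.\ $n_0<\tau(S)$. At infinity it has full rank iff $\lambda_{-j}(S)\neq 0$ for $j\le n_\infty$. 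By the Galois-conjugacy remark, this is again $n_\infty<\tau(S)$. Failure of either gives condition (i).
\end{itemize}
Conversely, if (i)--(iii) all fail, the differential has full rank equal to $\dim B_x$. Surjectivity of $\widehat{\mathcal O}_{Y,G}\to B_x$ then follows by the complete Nakayama lemma. Combined with injectivity, this shows the completion of the conductor is the unit ideal, so $G\notin V(\mathfrak c)$.

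\emph{Main obstacle.} The delicate point is the reverse direction in the singleton case. I must show that a rank drop genuinely prevents equality $\widehat A_G=B_x$, not merely that the particular moment generators fail to span. This holds because the cotangent space of $\widehat A_G$ is the image of that of $R$. That image is spanned by the differentials of the generating moment tuples, since higher moments lie in the square of the maximal ideal modulo lower ones via Newton's identities. A rank deficiency therefore means $\mathfrak m_A B_x+\mathfrak m_x^2\neq\mathfrak m_x$. This forces $\widehat A_G\subsetneq B_x$, which gives nonnormality at $G$ because $\widehat A_G$ is then a proper subring of its normalization.
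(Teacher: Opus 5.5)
Your proposal is correct and follows essentially the same route as the paper: normality at $G$ is reduced, via the whole-fiber completed model and faithful flatness, to the fiber being a singleton with a differential of full rank there, and the product description of the fiber together with Proposition~\ref{v6:spec:differential-rank} translates these conditions into (i)--(iii). The only cosmetic difference is that you obtain surjectivity onto $B_x$ from the cotangent criterion and Nakayama, whereas the paper recovers the source elementary symmetric coordinates directly from left inverses of the active columns and the nonzero endpoint Fourier coefficients.
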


\begin{proof}
If none occurs, the normalization fiber is a singleton.  Left inverses
in the nonzero clusters, and the nonzero endpoint coefficients for
$1\le j\le n_0,n_\infty$, recover every source elementary symmetric
coordinate.  Theorem~\ref{v6:spec:completed-model} then identifies the
completed image ring with the regular normalization ring.  Faithful
flatness of completion for the finite normalization quotient gives an
isomorphism near $G$.  Conversely, a multiple fiber or a differential
rank loss is incompatible with normalization being an isomorphism.
Proposition~\ref{v6:spec:differential-rank} detects exactly these losses.
\end{proof}

There is also a scheme-theoretic conductor formula when all formal
branches are immersive.

\begin{proposition}[Conductor of immersive formal branches]
\label{v6:spec:immersive-conductor}
Suppose every branch over $G$ is immersive.  With the notation of
Theorem~\ref{v6:spec:completed-model}, one has $B_i=R/I_i$, and the
conductor in $\prod_i B_i$ is
\begin{equation}\label{v6:spec:branch-conductor}
 \widehat{\mathfrak c}\,\prod_iB_i
 =\prod_{i=1}^h
 \frac{I_i+\bigcap_{j\ne i}I_j}{I_i}.
\end{equation}
For $h=1$, the empty intersection is understood to be $R$.
\end{proposition}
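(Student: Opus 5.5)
Immersivity of each formal branch first identifies each $B_i$ as a quotient of $R$. Proposition~\ref{v6:spec:differential-rank} says the coefficient map $R\to B_i$ is surjective on cotangent spaces exactly when the branch is immersive. Concretely, a constant left inverse of the active-column matrix recovers every $p_{c,j}$, and Newton's identities then recover the source elementary symmetric coordinates. A local homomorphism of complete Noetherian local rings that is surjective on $\mathfrak m/\mathfrak m^2$ is surjective, so $B_i=R/I_i$. By Theorem~\ref{v6:spec:completed-model}, the completed image is then $A=R/\bigcap_i I_i$, embedded diagonally in $\prod_i R/I_i$.

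Next I compute the conductor $\mathfrak C=\{f\in\prod_iB_i: f\prod_iB_i\subset A\}$. Since $\prod_iB_i$ is a product of rings, any ideal of it is a product $\prod_i\mathfrak a_i$ with $\mathfrak a_i\subset B_i$. The conductor is the largest ideal of $\prod_iB_i$ contained in $A$, so it suffices to describe $\mathfrak a_i=\{b\in B_i: b\,e_i\in A\}$, where $e_i$ is the $i$th idempotent. Suppose $b e_i\in A$ and lift it to $r\in R$ with image $(0,\dots,b,\dots,0)$. Then $r\in I_j$ for every $j\ne i$, and $b=r\bmod I_i$, so $b\in(I_i+\bigcap_{j\ne i}I_j)/I_i$. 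Conversely, any $r\in\bigcap_{j\ne i}I_j$ maps to $(0,\dots,\bar r,\dots,0)\in A$. Hence $\mathfrak a_i$ is exactly the displayed factor. For $h=1$ the condition is vacuous, giving $\mathfrak a_1=B_1$, the ring being normal there; this matches the convention that the empty intersection is $R$.

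It remains to match this with $\widehat{\mathfrak c}\prod_iB_i$, where $\mathfrak c$ is the global conductor on $Y_{S,m}$. This is the step needing care. The conductor is $\operatorname{Ann}_A(\widetilde A/A)$, and for finite modules annihilators commute with flat base change, since $\operatorname{Hom}$ of finitely presented modules does. Completion of the local ring $A_G$ is flat, and $\widetilde A_G\otimes\widehat A_G\simeq\prod_iB_i$ as in the proof of Theorem~\ref{v6:spec:completed-model}. So the completed conductor $\widehat{\mathfrak c}$ equals $\mathfrak C$. Because $\mathfrak C$ is already an ideal of $\prod_iB_i$, extending it to the normalization leaves it unchanged.
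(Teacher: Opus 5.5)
Your proposal is correct and follows the paper's proof essentially step for step. Both arguments use the same four ingredients: annihilators of finite modules commute with flat completion, immersivity gives $R\twoheadrightarrow B_i$, an element supported on one factor lies in the image exactly when it lifts to $\bigcap_{j\ne i}I_j$, and multiplying by the idempotents gives the reverse inclusion. Your justification of the surjection via the cotangent criterion for complete local rings (valid here because all residue fields are $K$) simply spells out what the paper calls formal immersivity.
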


\begin{proof}
Put $D=\widetilde A_G/A_G$ and
$\mathfrak c_G=\operatorname{Ann}_{A_G}(D)$.
Since $D$ is a finite $A_G$-module, its annihilator commutes
with flat completion.  Thus the conductor of the completed
inclusion is $\mathfrak c_G\widehat A_G$.
Formal immersivity gives the surjection $R\twoheadrightarrow B_i$.
An element supported only on factor $i$ of $\prod_iB_i$ lies in the
image of $R/\bigcap_jI_j$ precisely when it has a lift in
$\bigcap_{j\ne i}I_j$.  Its possible values are therefore the $i$th
ideal in \eqref{v6:spec:branch-conductor}.  The product of these ideals
is contained in the image and is an ideal of its normalization, hence
belongs to the conductor.  Conversely, multiplying a conductor element
by each idempotent of the product isolates such an element and proves
the reverse inclusion.
\end{proof}

For a nonimmersive branch, $B_i$ need not equal $R/I_i$; one must
retain the full completed subalgebra instead of using
\eqref{v6:spec:branch-conductor}.

\subsection{Classification with the normalization identified}
\label{r20:spec:classification-route}

\paragraph{The finite graph argument.}
For the finite normalization maps in this section and the next, let
$f_1:N\to Y_1$ and $f_2:N\to Y_2$ have the same geometric fibers
on the identified normalization.  Suppose their completed image rings
coincide inside $\prod_{x\in F}\widehat{\mathcal O}_{N,x}$ for
every common fiber $F$.  Take the reduced image $Z$ of $(f_1,f_2)$, with projections
$\pi_i:Z\to Y_i$.  These and $N\to Z$ are proper with finite fibers,
hence finite \cite[Tag~02LS]{Stacks}.  Their function fields are all $K(N)$, so $N\to Z$
is the normalization.  A common fiber gives one graph point $z$ over each $y_i$.  Its
coefficient ring is generated by both outputs.  The whole-fiber
argument of Theorem~\ref{v6:spec:completed-model} identifies
$\widehat{\mathcal O}_{Z,z}$ with the closed algebra generated by
both completed image rings inside the same product.  Their equality makes both projections
isomorphisms on completions.  The finite quotients
$(\pi_i)_*\mathcal O_Z/\mathcal O_{Y_i}$ then vanish by
faithful flatness of completion \cite[Tag~00MC]{Stacks}; closed points detect
their support since the images are of finite type over $K$.
Thus both projections are isomorphisms.  The induced isomorphism $Y_1\to Y_2$ lifts the identity on $N$;
uniqueness follows on the dense normalization-isomorphism open.
The hypothesis concerns embedded completed rings.

\paragraph{The converse sampling bound.}
Put $b(d)=d/p_{\min}(d)$, with $p_{\min}(d)$ the smallest prime
factor.  Cyclotomic conjugacy reduces nonzero frequencies to
$j=d/q\le b(d)$, $q\mid d$, $q>1$.  The bound below is needed
only to recover the spectrum from the image, not for sufficiency.

\begin{theorem}[Homogeneous Fourier classification]
\label{v6:spec:homogeneous-classification}
Let $S,T\subset C_d$ be nonempty aperiodic masks.  If $Z(S)=Z(T)$,
then, for every $m\ge1$, there is a unique isomorphism
\begin{equation}\label{v6:spec:marked-isomorphism}
 \varphi:Y_{S,m}\xrightarrow{\ \sim\ }Y_{T,m},\qquad
 \varphi\nu_S=\nu_T,
\end{equation}
under the fixed identification of their normalizations with $\PP^m$.
If $m\ge b(d)$, the converse holds.
\end{theorem}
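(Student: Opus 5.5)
For sufficiency I will apply the finite graph argument of \S\ref{r20:spec:classification-route} to $f_1=\nu_S$ and $f_2=\nu_T$ on $N=\PP^m$. This requires two checks: the two maps have the same geometric fibers, and at every common fiber $F$ the completed image rings coincide inside $\prod_{x\in F}\widehat{\mathcal O}_{\PP^m,x}$.

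\emph{Step 1: the circulants have the same column space.} The circulant $C_S$ is diagonalized by the characters of $C_d$, with eigenvalues $\lambda_j(S)$ (up to the sign convention on $j$). Hence $Z(S)=Z(T)$ gives $C_S$ and $C_T$ the same kernel and the same column space. Because both are diagonal in the same Fourier basis, there is a matrix $P$, invertible over $K$ and commuting with translations, such that $C_T=PC_S$: on frequencies with nonzero eigenvalue take the ratio $\lambda_j(T)/\lambda_j(S)$, and put $1$ elsewhere.

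\emph{Step 2: same fibers.} The fiber of $\nu_S$ over the image of an input divisor consists of the feasible configurations $n'$ with $C_Sn'=C_Sn$, together with the fixed endpoint parts. This condition is equivalent to $n'-n\in\ker C_S=\ker C_T$. So the sets $\mathcal F_S(C_Sn)$ and $\mathcal F_T(C_Tn)$ agree as sets of input divisors, and the orbit structure is read off the same input. The endpoint parts $n_0,n_\infty$ are recovered in both cases by dividing by $w$. Note that $|S|$ and $|T|$ may differ, but $\lambda_0\ne0$ for both, so nothing changes.

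\emph{Step 3: same completed rings.} By Theorem~\ref{v6:spec:completed-model}, the completed ring is the closed algebra generated by the moment tuples $q_{a,j}=\sum_c(C_S)_{a,c}p_{c,j}$, indexed over the whole fiber, together with the endpoint tuples $\lambda_{\pm j}p_j^{(0/\infty)}$. The constant matrix $P$ is applied uniformly to every configuration in the fiber: the orbit representative $v$ is chosen once per output orbit, and the same $v$ serves for every configuration. Therefore the $T$-tuples are constant $K$-linear combinations of the $S$-tuples, and conversely via $P^{-1}$. The endpoint tuples differ by the nonzero scalars $\lambda_j(T)/\lambda_j(S)$ exactly where these are defined. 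The closed subalgebras therefore coincide. The finite graph argument then yields $\varphi$ and its uniqueness.

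\emph{Converse.} Assume $m\ge b(d)$ and that an isomorphism $\varphi$ with $\varphi\nu_S=\nu_T$ exists. Then $\varphi$ identifies the completed rings at every point, and in particular at the point where all $m$ input roots sit at $0$. There the fiber is a single point, since all input roots lie at $0$, and the completed ring is the completion at the origin of $A_{S,m}=K[p_j:\lambda_j(S)\ne0]$ from Proposition~\ref{v6:spec:monic-algebra}. For this step, the key point is that for $j\le m$ the power sum $p_j$ lies in this subalgebra of $B_m$ if and only if $\lambda_j(S)\ne0$. I will prove this by a grading/cotangent argument: $B_m$ is graded with $p_1,\dots,p_m$ algebraically independent generators. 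An element of degree $j\le m$ of the subalgebra is a polynomial in the allowed $p_i$ with $i\le j$, so $p_j$ itself can occur only when $j$ is allowed. This recovers $Z(S)\cap\{1,\dots,m\}$. Because the subalgebras are identified by the same identification of $B_m$, we get $Z(S)\cap[1,m]=Z(T)\cap[1,m]$. By cyclotomic conjugacy, membership of $j$ in $Z$ depends only on the order of $\zeta^j$, and every order $q>1$ is attained by $d/q\le b(d)\le m$. Combined with $\lambda_0\ne0$, this gives $Z(S)=Z(T)$.

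I expect the main obstacle to be the grading claim. The completed ring lives in power series and the identification need not be graded a priori, so I will pass to associated graded rings, or use the $\Gm$-equivariance of both normalization maps, to reduce to the graded statement.
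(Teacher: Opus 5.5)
Your proposal is correct and follows the paper's proof: Fourier diagonalization gives equal kernels and a constant circulant $L$ with $C_T=LC_S$, which transports the whole-fiber moment tuples and endpoint moments into the finite graph argument, and the converse is read off at the total-zero point using cyclotomic conjugacy and $\lambda_0\ne0$. The grading obstacle you anticipate is not real, because the identity lift makes the two completed rings equal subsets of the same $K[[e_1,\ldots,e_m]]$. The paper also avoids it by reading off $\Span_K\{e_j:1\le j\le m,\ \lambda_j(S)\ne0\}$ as the image of the maximal ideal in $\mathfrak m/\mathfrak m^2$, which handles completion automatically.
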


\begin{proof}
\emph{Sufficiency: fibers and completed subalgebras.}
Fourier diagonalization identifies equality of zero spectra with equality
of both kernels and row spaces of $C_S,C_T$.  Kernel equality gives
the same equivalence relation on each nonzero orbit in
\eqref{v6:spec:feasible-fiber}.  Endpoint multiplicities divided by $|S|$ or $|T|$ recover the input
counts.  Thus both maps induce the same equivalence relation on
$\PP^m$, even when their output degrees differ.

Row-space equality gives constant matrices $L,M$ with $C_T=LC_S$
and $C_S=MC_T$.  Applied to \eqref{v6:spec:exact-moments}, they
express either set of output moment tuples in terms of the other, simultaneously
on all branches of a common fiber.  Absent output positions are padded
by zero moments.  Different cluster multiplicities change the finite
Newton truncation, not the algebra topologically generated by all moments.
The endpoint algebras agree by \eqref{v6:spec:monic-ring} and its reciprocal version.
Theorem~\ref{v6:spec:completed-model} thus gives equal completed
subalgebras in the common product.  The graph argument above proves
\eqref{v6:spec:marked-isomorphism} and uniqueness for every $m\ge1$.

\emph{Necessity: the total-zero collision.}
At the unique normalization point with all parameters zero, put
$\widehat B=K[[e_1,\ldots,e_m]]$.  The image of the completed
image maximal ideal in $\mathfrak m_{\widehat B}/\mathfrak m_{\widehat B}^2$ is
\begin{equation}\label{v6:spec:homogeneous-cotangent}
 \Span_K\{e_j:1\le j\le m,\ \lambda_j(S)\ne0\}.
\end{equation}
Newton's identities give these linear terms; higher $p_j$ and
products add none.  The identity lift fixes the $e_j$ and hence
equates the embedded subspaces.
For $m\ge b(d)$, they detect every $j=d/q$, $q\mid d$, $q>1$.
The integral mask polynomial vanishes there exactly when $\Phi_q$
divides it, which determines vanishing at all frequencies of order $q$.
These recover the nonzero spectrum; $\lambda_0=|S|>0$.
\end{proof}

\begin{remark}\label{v6:spec:classification-scope}
The theorem classifies images with a fixed normalization identification.
It does not classify arbitrary abstract isomorphisms allowing an arbitrary
automorphism of $\PP^m$ on the normalization.  Nor does equality of the
pulled-back hyperplane bundles prove that their descents to a nonnormal
image are identified.  In particular, complementary masks have equal zero
spectra and canonically isomorphic images, although their hyperplane
pullbacks are $\mathcal O(|S|)$ and $\mathcal O(d-|S|)$.
\end{remark}

The next examples separate forgotten necklace shape from sharpness of
the converse bound.

\begin{example}[A mask forgotten by its homogeneous image]
\label{v6:spec:forgotten-mask}
In $C_{12}$, put
\[
 S=\{0,1,2,3\},\qquad T=\{0,1,2,7\}.
\]
The cyclic gap multisets are respectively $\{1,1,1,9\}$ and
$\{1,1,5,5\}$, so the masks are inequivalent even after reflection.
Both are aperiodic and have weight four.  Their mask polynomials have
exactly the cyclotomic divisors $\Phi_2$ and $\Phi_4$ among the
$\Phi_q$ with $q\mid12$, $q>1$, and hence
$Z(S)=Z(T)=\{3,6,9\}$.  Theorem~\ref{v6:spec:homogeneous-classification}
gives the same marked normalization image for every $m$.  For $m\ge3$
these images are nonnormal at the total-zero collision by
Corollary~\ref{v6:spec:conductor-locus}, so the identification also
retains nontrivial boundary structure.
\end{example}

\begin{example}[Sharpness of the cyclotomic sampling bound]
\label{v7:spec:sharp-bound}
Let $d$ be composite, $p=p_{\min}(d)$, $S=\{0,\ldots,p-1\}$, and
$T=\{0\}$.  Both masks are aperiodic.  The geometric-series formula
shows that the first positive zero of $\lambda_j(S)$ is $d/p=b(d)$,
whereas $T$ has no zero.  For $m<b(d)$, both marked images are
isomorphic to their normalization $\PP^m$ by
Corollary~\ref{v6:norm:homogeneous-threshold}, although their spectra
differ.  Thus no smaller uniform bound works in the full parameterized
class of Theorem~\ref{v6:spec:homogeneous-classification}.

Sharpness occurs with equal weights within the Krylov range for
$d=2L$, $L\ge3$ odd, using $S=\{0,1\}$ and $T=\{0,2\}$.
The first zero of $S$ is $L=b(d)$, while the odd order of $\zeta^2$
prevents any zero for $T$.  For $1\le m<L$, both have $r=2m$ and
$s=m+1\le r<dm$, but identical marked smooth images despite different
spectra.  For prime $d$, all proper nonempty masks have empty zero
spectrum; no nontrivial sharpness claim is made in that case.
\end{example}

\section{Descent II: mixed Fourier lines and marked rigidity}
\label{v6:spec:mixed-section}

The homogeneous zero spectrum records only one Fourier coefficient at a time.  For mixed types the corresponding invariant is a projective Fourier line.  The successive markings considered here are an identified normalization, a calibrated singleton, and the normalized relation map; their converse bounds are different.

\subsection{Projective Fourier lines}

Choose two ordered lists of nonempty aperiodic masks
$\boldsymbol S=(S_1,\ldots,S_t)$ and
$\boldsymbol T=(T_1,\ldots,T_t)$ in $C_d$.  Within each list assume that
no two masks are rotation-equivalent.  Fix positive multiplicities
$n_1,\ldots,n_t$, and put $m=\sum_\nu n_\nu$ and
\[
 \mathcal N=\prod_{\nu=1}^t\sym^{n_\nu}\PP^1
           =\prod_{\nu=1}^t\PP^{n_\nu}.
\]
These are the occupied-type multiplicities in the product normalization of
\cite[Theorem~5.4]{LiProfileGeometry}.  Identify the normalization
coordinates on both sides and define
\begin{equation}\label{v6:spec:mixed-map}
 f_{\boldsymbol S}:\mathcal N\longrightarrow Y_{\boldsymbol S},
 \qquad
 ([u_{\nu,i}:v_{\nu,i}])\longmapsto
 \left[\prod_{\nu,i}\prod_{a\in S_\nu}
 (v_{\nu,i}X-\zeta^a u_{\nu,i}Z)\right].
\end{equation}
The ample pullback $\mathcal O_{\mathcal N}(|S_1|,\ldots,|S_t|)$
gives finiteness.  The distinct aperiodic types recover a general
input from distinct nonzero orbits, so $\mathcal N$ is the normalization
as in \cite[Theorem~5.4]{LiProfileGeometry}.
Write $\lambda^{\boldsymbol S}_{\nu,j}=\sum_{a\in S_\nu}\zeta^{aj}$
and define, with the zero subspace allowed,
\begin{equation}\label{v6:spec:fourier-lines}
 \ell_{\boldsymbol S}(j)
 =K(\lambda^{\boldsymbol S}_{1,j},\ldots,
       \lambda^{\boldsymbol S}_{t,j})\subset K^t,
 \qquad j\in C_d.
\end{equation}
On the monic chart put $P_{\nu,j}=\sum_i u_{\nu,i}^j$ and
$Q_j^{\boldsymbol S}=\sum_\nu\lambda^{\boldsymbol S}_{\nu,j}P_{\nu,j}$.
Newton's identities and the output recurrence give the actual algebra
$K[Q_1^{\boldsymbol S},\ldots,Q_{R_{\boldsymbol S}}^{\boldsymbol S}]
=K[Q_j^{\boldsymbol S}:j\ge1]$, where
$R_{\boldsymbol S}=\sum_\nu n_\nu|S_\nu|$.
This is the generalized power-sum form of
\cite[Section~1]{EtingofRains}, but allows zero coefficients,
unlike the convention there; no specialization theorem is used here.
The endpoint weight line is
$\ell_{\boldsymbol S}(0)=K(|S_1|,\ldots,|S_t|)$.
Detecting it accounts for the two converse bounds below.

\begin{theorem}[Mixed Fourier classification]
\label{v6:spec:mixed-classification}
If $\ell_{\boldsymbol S}(j)=\ell_{\boldsymbol T}(j)$ for every
$j\in C_d$, then there is a unique isomorphism
\[
 \varphi:Y_{\boldsymbol S}\xrightarrow{\ \sim\ }Y_{\boldsymbol T},
 \qquad \varphi f_{\boldsymbol S}=f_{\boldsymbol T}.
\]
If every $n_\nu\ge d$, the converse holds.  If the weight vectors
$(|S_\nu|)_\nu$ and $(|T_\nu|)_\nu$ are already known to be
proportional, the converse requires only $n_\nu\ge b(d)$ for all $\nu$.
\end{theorem}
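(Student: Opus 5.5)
The proof will mirror Theorem~\ref{v6:spec:homogeneous-classification}: sufficiency via the completed model and the finite graph argument, necessity via cotangent data at the total-zero point.

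\emph{Sufficiency.} First I extend Theorem~\ref{v6:spec:completed-model} to the mixed map \eqref{v6:spec:mixed-map}. For a nonzero orbit $\zeta^{C_d}v$, the input multiplicities are $n_{\nu,c}$, recording how many inputs of type $\nu$ sit at $\zeta^cv$. The output cluster multiplicities are $b=\sum_\nu C_{S_\nu}n_\nu$, and the exact moments are
\[
 q_{a,j}=\sum_{\nu,c}(C_{S_\nu})_{a,c}\,p_{\nu,c,j}.
\]
The proof of Theorem~\ref{v6:spec:completed-model} applies verbatim, because Hensel factorization and Newton's identities do not see the type. At the endpoints the moments are $Q_j^{(0)}=\sum_\nu\lambda_{\nu,j}P^{(0)}_{\nu,j}$, and reciprocally at infinity.

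Next I Fourier-diagonalize the block matrix $\mathbf C_{\boldsymbol S}=[C_{S_1}|\cdots|C_{S_t}]\colon (K^d)^t\to K^d$. At frequency $j$ it acts on the $t$ Fourier components by the row vector $(\lambda_{\nu,j})_\nu$. Hence its kernel and row space are determined frequency-by-frequency by the lines $\ell_{\boldsymbol S}(j)$. Equality of all lines therefore gives equal kernels, hence equal feasible fibers. It also gives equal row spaces, so $\mathbf C_{\boldsymbol T}=L\mathbf C_{\boldsymbol S}$ and $\mathbf C_{\boldsymbol S}=M\mathbf C_{\boldsymbol T}$ for constant matrices $L,M$.

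Two cases of fiber recovery need separate care. At the endpoints the line $\ell(0)$, i.e.\ proportional weights, lets me recover the total count. I recover the individual counts $n_{\nu,0}$ as well, because they are fixed parameters of the point on $\mathcal N$, and the fiber is computed on $\mathcal N$ itself. I will check that the endpoint algebras $K[Q_j]$ coincide: $Q_j^{\boldsymbol T}$ is a nonzero scalar multiple of $Q_j^{\boldsymbol S}$, or both vanish, by line equality. The zero-orbit contribution of the equivalence relation is handled through the kernel at frequency $0$. Then the moment tuples generate the same closed subalgebra on every fiber, and the finite graph argument gives the unique $\varphi$.

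\emph{Necessity.} I work at the total-zero point of $\mathcal N$ with $\widehat B=\widehat\bigotimes_\nu K[[e_{\nu,1},\ldots,e_{\nu,n_\nu}]]$. By Newton's identities, the image of the completed maximal ideal in the cotangent space is
\[
 \Span_K\Bigl\{\sum_\nu\lambda_{\nu,j}\,(-1)^{j-1}j\,e_{\nu,j}\ :\ j\ge1\Bigr\},
\]
where $e_{\nu,j}:=0$ for $j>n_\nu$. Since the lift fixes the $e_{\nu,j}$, the embedded subspaces coincide. The coordinate blocks for different $j$ are disjoint, so for each $j$ the truncated vectors $(\lambda_{\nu,j})_{\nu:n_\nu\ge j}$ span the same line.

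If every $n_\nu\ge d$, this holds for all $j=1,\ldots,d$. The frequency $j=d$ gives $\ell(0)$, which explains the first bound. If the weights are already known to be proportional, $\ell(0)$ is given, and I only need $j\le b(d)$. Galois conjugacy then spreads to all frequencies, but it is subtle here: it only transports vanishing, not the line itself.

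\emph{Main obstacle.} I expect the hardest step to be showing that $\ell(j)$ for $j=d/q\cdot k$ with $\gcd(k,q)=1$ is determined by $\ell(d/q)$. The plan is to apply the Galois automorphism $\sigma_k\colon\zeta\mapsto\zeta^k$ to the integral vectors, using that line equality over $\mathbb Q(\zeta)$ is Galois-equivariant. I must also check that the scalar of proportionality is Galois-transported consistently. Here I use that both $\lambda^{\boldsymbol S}$ and $\lambda^{\boldsymbol T}$ lie in $\mathbb Q(\zeta)$: if $\lambda^{\boldsymbol T}=c\lambda^{\boldsymbol S}$, then $c\in\mathbb Q(\zeta)$, or the common line is zero. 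Thus $\sigma_k$ gives $\lambda^{\boldsymbol T}_{j k}=\sigma_k(c)\lambda^{\boldsymbol S}_{jk}$, and every frequency of order $q$ lies in the orbit of $d/q$.
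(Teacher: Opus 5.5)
Your proposal is correct and is essentially the paper's proof. Fourier diagonalization of the block circulant gives equal kernels and a circulant $L$ with $\mathbf C_{\boldsymbol T}=L\mathbf C_{\boldsymbol S}$, and $Q^{\boldsymbol T}_j=c_jQ^{\boldsymbol S}_j$ at the endpoints. Sufficiency then follows from the whole-fiber moment tuples and the finite graph argument. The converse comes from the weight-graded cotangent image at the total-zero point together with cyclotomic conjugacy; your Galois transport of the scalar $c\in\mathbb Q(\zeta)$ is equivalent to the paper's $2\times2$-minor formulation.

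One wording fix concerns the endpoints. The allocations $(n_{\nu,0})_\nu$, and even $\sum_\nu n_{\nu,0}$, can vary within a fiber (as in Example~\ref{v6:spec:no-relation-descent}), so the only endpoint quantity the output determines is the weighted sum $\sum_\nu|S_\nu|n_{\nu,0}$. Proportionality of the weights is exactly what makes the $\boldsymbol S$- and $\boldsymbol T$-endpoint conditions coincide, and that is all your argument actually uses.
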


\begin{proof}
\emph{Fourier comparison.}
Form $K_{\boldsymbol S}=[C_{S_1}\ \cdots\ C_{S_t}]$ and
$K_{\boldsymbol T}=[C_{T_1}\ \cdots\ C_{T_t}]$.
For $\mathcal F_{j,a}=\zeta^{ja}$ one has
$\mathcal FC_{S_\nu}=\operatorname{diag}(\lambda^{\boldsymbol S}_{\nu,j})\mathcal F$.
Thus the block row space at frequency $j$ is
$\ell_{\boldsymbol S}(j)$.  Choose nonzero $c_j$ with
$\lambda^{\boldsymbol T}_{\nu,j}=c_j\lambda^{\boldsymbol S}_{\nu,j}$
for all $\nu$, arbitrarily if the common line is zero, and extend them
periodically.  The invertible circulant matrix
$L=\mathcal F^{-1}\operatorname{diag}(c_j)\mathcal F$ satisfies
$K_{\boldsymbol T}=LK_{\boldsymbol S}$; the block kernels agree.

\emph{Entire fibers with coupled type counts.}
On a visible nonzero orbit $\Omega$, let $n_{\nu,\Omega,c}$ be the
input counts.  Feasible allocations satisfy the output vector equations
$K_{\boldsymbol S}n_\Omega=b_\Omega$, the endpoint equations with
weights $|S_\nu|$, and, for every type,
\[
 \sum_\Omega\sum_{c\in C_d}n_{\nu,\Omega,c}
       +n_{\nu,0}+n_{\nu,\infty}=n_\nu.
\]
The nonnegative integer allocations are coupled by these global
constraints.  Kernel equality preserves nonzero-orbit output equalities;
proportionality of the weight vectors preserves endpoint equalities.
Every occupied orbit is visible, so both maps have the same entire
fibers.  This compares fixed source allocations and requires no
positivity of $L$.

\emph{Completed subalgebras and descent.}
Fix a common fiber.  On every branch write the nonzero-orbit inputs as
$\zeta^cv(1+t_{\nu,c,i})$ and put
$p_{\nu,c,j}=\sum_i t_{\nu,c,i}^j$.  Their exact output moments satisfy
\[
 q^{\boldsymbol S}_{a,j}
 =\sum_{\nu,c}1_{S_\nu}(a-c)p_{\nu,c,j},\qquad
 q^{\boldsymbol T}_{\bullet,j}=Lq^{\boldsymbol S}_{\bullet,j}.
\]
The same $L$ acts on all moment orders and feasible allocations
in the common normalization product, with inverse giving the reverse
inclusion.  Unoccupied output positions have zero moments on every branch.  At zero,
\begin{equation}\label{v6:spec:mixed-zero-moments}
 Q_j^{\boldsymbol S}=\sum_\nu
       \lambda^{\boldsymbol S}_{\nu,j}p_{\nu,j},
 \qquad Q_j^{\boldsymbol T}=c_j Q_j^{\boldsymbol S};
\end{equation}
both moments vanish when their common line is zero.  At infinity use
$c_{-j}$ and reciprocal parameters.  These identities hold even when
endpoint allocations vary among branches.
The Hensel--Newton argument of Theorem~\ref{v6:spec:completed-model}
with these coupled configurations identifies the completed images
with the closed algebras of their whole-fiber moment tuples.
The identities give equal subalgebras there.
The finite graph argument of Section~\ref{r20:spec:classification-route}
now proves sufficiency and uniqueness.

\emph{Necessity and its two thresholds.}
At the unique all-zero normalization point put
\[
 \widehat B=K[[e_{\nu,j}:1\le\nu\le t,\ 1\le j\le n_\nu]],
 \qquad \deg e_{\nu,j}=j.
\]
The image of the completed image maximal ideal in
$\mathfrak m_{\widehat B}/\mathfrak m_{\widehat B}^2$ has weight-$j$
part spanned by
\begin{equation}\label{v6:spec:mixed-cotangent}
 (-1)^{j-1}j\sum_{\nu:n_\nu\ge j}
       \lambda^{\boldsymbol S}_{\nu,j}e_{\nu,j}.
\end{equation}
These Newton linear terms all occur among the finite outputs since
$\max_\nu n_\nu\le R_{\boldsymbol S}$; products and completion
add no classes.  An identity lift equates the weighted subspaces.  If every $n_\nu\ge d$, weights
$1,\ldots,d$ detect every line, including frequency zero at weight $d$.
If the weight line is known, it is enough to see every
$j=d/q\le b(d)$, $q\mid d$, $q>1$.
The coordinate entries and $2\times2$ minors belong to
$\mathbb Q(\zeta)$.  Nonzero lines agree exactly when these minors
vanish; a zero line requires all coordinates to vanish on both sides.
Cyclotomic conjugacy transports these conditions, including coordinate
nonvanishing, to every frequency of the same order.  This proves the
sharpened converse over the stated field $K$.
\end{proof}

On squarefree nonzero outputs, Proposition~\ref{v7:linear:exact}
gives the completed image from all feasible allocations as an exact
linear configuration, with the global type counts retained.

\subsection{Calibrating a marked singleton}
\label{r20:spec:calibration}
A fixed singleton turns each observed line into a vector.  Its calibration
recovers the nonconstant Fourier data without first reading the weight line.

\begin{corollary}[A calibrated singleton]\label{v6:spec:singleton-marked}
Suppose both ordered lists contain a distinguished singleton
$S_0=T_0=\{0\}$, whose normalization coordinate is part of the marking,
and suppose every multiplicity is at least $b(d)$.  Then the completed
image germ at the total-zero collision, embedded in the fixed completed
normalization ring, determines every mask
representative $S_\nu$.  In particular, an image isomorphism lifting the
identity of the marked normalization exists if and only if
$S_\nu=T_\nu$ for every $\nu$.
\end{corollary}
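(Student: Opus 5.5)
The plan is to read the masks off the cotangent image at the total-zero point, in the same way as the necessity part of Theorem~\ref{v6:spec:mixed-classification}, with the singleton fixing the scaling of each Fourier line.

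\textbf{Step 1: the cotangent lines.} At the total-zero normalization point, by \eqref{v6:spec:mixed-cotangent} the weight-$j$ part of the image of the completed maximal ideal in $\mathfrak m_{\widehat B}/\mathfrak m_{\widehat B}^2$ is the line spanned by
\[
 \sum_{\nu:n_\nu\ge j}\lambda^{\boldsymbol S}_{\nu,j}e_{\nu,j}.
\]
The index set includes the distinguished singleton $\nu=0$, whose coefficient is $\lambda_{0,j}=1$. The marking fixes the coordinates $e_{\nu,j}$, including those of the singleton, so the embedded germ determines this line as a subspace of the span of the $e_{\nu,j}$. Since every $n_\nu\ge b(d)$, the line is visible for each $1\le j\le b(d)$. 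Its $e_{0,j}$-coefficient is nonzero, so the line has a unique generator normalized to coefficient $1$ there. Reading off the other coordinates of that generator recovers the actual numbers $\lambda^{\boldsymbol S}_{\nu,j}$ for every $\nu$ and every $1\le j\le b(d)$, not merely the projective line.

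\textbf{Step 2: all frequencies.} Each value $\lambda_{\nu,j}=M_{S_\nu}(\zeta^j)$ lies in $\mathbb Q(\zeta)$. For $\sigma\in\operatorname{Gal}(\mathbb Q(\zeta)/\mathbb Q)$ we have $\sigma(M_{S_\nu}(\zeta^j))=M_{S_\nu}(\sigma(\zeta)^j)$, so the value at $j=d/q$ determines the value at every frequency of order $q$. Every nonzero frequency has order $q>1$ dividing $d$, with representative $d/q\le b(d)$, so all $\lambda_{\nu,j}$ with $j\ne0$ are recovered. The one remaining quantity is $\lambda_{\nu,0}=|S_\nu|$. Fourier inversion gives
\[
 1_{S_\nu}(a)=\frac1d\sum_{j}\lambda_{\nu,j}\zeta^{-ja}.
\]
The unknown $|S_\nu|$ enters this formula only as the constant shift $|S_\nu|/d$. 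The left side takes only the values $0$ and $1$ and is not constant, because $S_\nu$ is nonempty and proper; $S_\nu=C_d$ is periodic, and nonemptiness holds by hypothesis. Hence the nonconstant part $g(a)=\frac1d\sum_{j\ne0}\lambda_{\nu,j}\zeta^{-ja}$ takes exactly two values, and $S_\nu$ is the set where it takes the larger one. This recovers $S_\nu$ exactly, as a subset of $C_d$ and not only up to rotation, which is consistent with the marked normalization coordinates.

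\textbf{Step 3: the "if and only if".} If $S_\nu=T_\nu$ for all $\nu$, the identity map is the required isomorphism. Uniqueness then follows from Theorem~\ref{v6:spec:mixed-classification}, or trivially since the maps are equal. Conversely, an image isomorphism lifting the identity of the marked normalization identifies the embedded completed germs at the total-zero point, as in the necessity arguments above. Steps 1--2 then give $S_\nu=T_\nu$.

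\textbf{Main obstacle.} The delicate point is Step 1: the line must be recovered as an embedded subspace with the singleton coordinate genuinely present. This requires that the Newton linear term for weight $j$ actually occurs among the finite outputs, which holds since $j\le b(d)\le\max_\nu n_\nu\le R_{\boldsymbol S}$. It also requires that products and completion contribute no linear classes, exactly as in the proof of Theorem~\ref{v6:spec:mixed-classification}. Once that is in place, the calibration and Galois steps are routine.
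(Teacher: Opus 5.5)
Your proposal is correct and follows essentially the same route as the paper. Both arguments calibrate each cotangent Fourier line at the total-zero point by the singleton coordinate $e_{0,j}$, and both extend from the sampled frequencies $j=d/q\le b(d)$ to all nonzero frequencies by cyclotomic conjugacy. Both then recover each $0/1$ indicator by Fourier inversion without the constant frequency, and obtain sufficiency directly from the parameterization.
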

\begin{proof}
At every sampled frequency $j=d/q\le b(d)$, the hypotheses include
$n_0\ge j$, so the singleton coordinate $e_{0,j}$ is present in
\eqref{v6:spec:mixed-cotangent}.
After removing the common Newton factor $(-1)^{j-1}j$,
its coefficient is one, so the recovered line has a unique
representative whose singleton coordinate is $1$.  Cyclotomic conjugacy then
recovers every $\lambda_{\nu,j}$ with $j\ne0$ in $C_d$.  Inverse Fourier transformation without the
constant frequency gives
\begin{equation}\label{v6:spec:fourier-inversion}
 \frac1d\sum_{j=1}^{d-1}\lambda_{\nu,j}\zeta^{-aj}
   =1_{S_\nu}(a)-\frac{|S_\nu|}{d}.
\end{equation}
Each mask is a nonempty proper subset: the full subset is not
aperiodic for $d\ge2$.  The two values in the right-hand function
therefore determine its $0/1$ indicator uniquely.  Equivalently, two
such indicators differing by a constant must coincide.  Necessity follows from \eqref{v6:spec:mixed-cotangent} without the
weight frequency; sufficiency is the parameterization.
\end{proof}

Fixed coordinates recover representatives; the next marking instead
allows the rotations preserving the normalized relation map.

\subsection{The normalized relation correspondence}
\label{r20:spec:relation-correspondence}

For an aperiodic occupied profile $\pi$ with types $S_\nu$ and
multiplicities $n_\nu$, put
$\mathcal N_\pi=\prod_\nu\sym^{n_\nu}\PP^1$ and write
$f_\pi:\mathcal N_\pi\longrightarrow Y_\pi$ for its normalization
map.  The normalized relation map is
\begin{equation}\label{v6:spec:relation-map}
 \widehat q_\pi:\mathcal N_\pi\longrightarrow\sym^m\PP^1=\PP^m,
 \qquad
 ([u_{\nu,i}:v_{\nu,i}])\longmapsto
 \sum_{\nu,i}[u_{\nu,i}^{d}:v_{\nu,i}^{d}].
\end{equation}
Under the relation-space identification for $I=d\{0,\ldots,m\}$,
this records the primitive relation divisor.  A
\emph{relation-compatible image isomorphism}
$\psi:Y_\pi\xrightarrow{\sim}Y_{\pi'}$ means that its unique
normalization lift $\widetilde\psi$ satisfies
$\widehat q_{\pi'}\widetilde\psi=\widehat q_\pi$, with the relation
base fixed.  Thus the object being classified is the diagram
$Y_\pi\longleftarrow\mathcal N_\pi\longrightarrow\PP^m$.

\begin{proposition}[Automorphisms of the normalized relation cover]
\label{v6:spec:normalizer}
Let $\Gamma=C_d\wr\mathfrak S_m$, and let
$H_\pi=\prod_\nu\mathfrak S_{n_\nu}\subset\mathfrak S_m
\subset\Gamma$ be the Young subgroup determined by the types.  Then
\begin{equation}\label{v6:spec:normalizer-formula}
 \operatorname{Aut}_{\PP^m}(\mathcal N_\pi)
 \simeq N_\Gamma(H_\pi)/H_\pi
 \simeq C_d^t\rtimes
       \prod_{n\ge1}\mathfrak S_{\#\{\nu:n_\nu=n\}}.
\end{equation}
These automorphisms independently rotate all parameters of each type
by a common $d$th root of unity and permute type factors of equal
dimension.  Two such normalizations are isomorphic over the fixed
relation base exactly when their multiplicity multisets agree.
\end{proposition}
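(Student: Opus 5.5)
The plan is to present $\mathcal N_\pi\to\PP^m$ as a quotient of $(\PP^1)^m$ and then read off the deck transformations by Galois theory. The composite
\[
 (\PP^1)^m\xrightarrow{\ (x_i)\mapsto(x_i^d)\ }(\PP^1)^m\longrightarrow\sym^m\PP^1=\PP^m
\]
is the quotient by $\Gamma=C_d\wr\mathfrak S_m$. Here $C_d^m$ acts by $u_i\mapsto\zeta^{a_i}u_i$ and $\mathfrak S_m$ permutes the factors. Ordering the coordinates type by type identifies $\mathcal N_\pi=\prod_\nu\sym^{n_\nu}\PP^1$ with $(\PP^1)^m/H_\pi$, and $\widehat q_\pi$ becomes the induced map $(\PP^1)^m/H_\pi\to(\PP^1)^m/\Gamma$.

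\textbf{Step 1: reduction to normalizers.} Both spaces are normal and the map is finite. Hence $K(\mathcal N_\pi)=K((\PP^1)^m)^{H_\pi}$, and $\mathcal N_\pi$ is the normalization of $\PP^m$ in that field. An automorphism over $\PP^m$ is therefore the same as a $K(\PP^m)$-automorphism of $K((\PP^1)^m)^{H_\pi}$. The field extension $K((\PP^1)^m)/K(\PP^m)$ is Galois with group $\Gamma$, since $\Gamma$ acts faithfully. Galois theory then gives
\[
 \operatorname{Aut}_{K(\PP^m)}\bigl(K((\PP^1)^m)^{H_\pi}\bigr)\simeq N_\Gamma(H_\pi)/H_\pi .
\]
Conversely, every such field automorphism extends uniquely to the normalization, so it is an automorphism of $\mathcal N_\pi$ over $\PP^m$. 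This proves the first isomorphism in \eqref{v6:spec:normalizer-formula}.

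\textbf{Step 2: computing $N_\Gamma(H_\pi)$.} I expect this group-theoretic step to be the main obstacle. Write $g=(a,\sigma)\in C_d^m\rtimes\mathfrak S_m$. Conjugating $h\in H_\pi$ by $g$ gives an element whose permutation part is $\sigma h\sigma^{-1}$ and whose $C_d^m$-part is $a-h\cdot a$. Two conditions follow from requiring this to lie in $H_\pi$.

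First, $\sigma$ must normalize the Young subgroup $H_\pi$ in $\mathfrak S_m$. The factors $\mathfrak S_{n_\nu}$ with $n_\nu\ge2$ are permuted among themselves, which forces $\sigma$ to permute the blocks among blocks of equal size. Singleton blocks need separate care: all of them are fixed pointwise by $H_\pi$, so $\sigma$ may permute them freely, and this is again a permutation of blocks of size $1$. Thus the permutation part ranges over $\prod_\nu\mathfrak S_{n_\nu}\rtimes\prod_{n}\mathfrak S_{\#\{\nu:n_\nu=n\}}$.

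Second, we need $a=h\cdot a$ for all $h\in H_\pi$, which means $a$ is constant on each block. This gives the factor $C_d^t$.

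Dividing by $H_\pi$ leaves $C_d^t\rtimes\prod_n\mathfrak S_{\#\{\nu:n_\nu=n\}}$. Under the identification of Step 1, this group acts by rotating each type's parameters by a common root of unity and by permuting type factors of equal dimension, as the statement describes.

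\textbf{Step 3: isomorphism over the relation base.} If the multiplicity multisets agree, a permutation of types gives an isomorphism over $\PP^m$. Conversely, suppose $\mathcal N_\pi\simeq\mathcal N_{\pi'}$ over $\PP^m$. Both are quotients of the same Galois cover, so by Galois theory the subgroups $H_\pi$ and $H_{\pi'}$ are conjugate in $\Gamma$. Projecting to $\mathfrak S_m$, the two Young subgroups are conjugate, and their orbit-size multisets are conjugation invariants. Those multisets are exactly the multiplicity multisets, so they agree.
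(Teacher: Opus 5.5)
Your proposal is correct and follows essentially the same route as the paper. Both arguments use the Galois cover $(\PP^1)^m\to\PP^m$ with group $\Gamma$, the identification $\operatorname{Aut}\simeq N_\Gamma(H_\pi)/H_\pi$ with extension across the boundary by normality, the block-permutation and block-constancy computation of the normalizer, and conjugacy of Young subgroups for the final assertion. The only difference there is that you derive the conjugacy criterion directly from orbit sizes, whereas the paper cites Proposition~5.1 of its companion. One cosmetic slip: with permutation part $\sigma h\sigma^{-1}$, the $C_d^m$-part of $ghg^{-1}$ is $a-(\sigma h\sigma^{-1})\cdot a$, not $a-h\cdot a$; once $\sigma$ normalizes $H_\pi$, this gives the same condition that $a$ be constant on blocks.
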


\begin{proof}
The quotients of $(\PP^1)^m$ by $\Gamma$ and $H_\pi$ are the
fixed relation base and $\mathcal N_\pi$, respectively.  On
$F=K(u_1,\ldots,u_m)$ the action is faithful, with
$F^\Gamma=K(e_1(u_1^d,\ldots,u_m^d),\ldots,e_m(u_1^d,\ldots,u_m^d))$;
hence $F/F^\Gamma$ is Galois with group $\Gamma$
\cite[Tag~09I3]{Stacks}.  Restriction from $N_\Gamma(H_\pi)$ to
$\operatorname{Aut}_{F^\Gamma}(F^{H_\pi})$ has kernel $H_\pi$ and is
surjective: any intermediate-field automorphism extends to the finite
Galois extension $F$ \cite[Tag~0BME]{Stacks}.  The finite normal cover is, on each affine
base open, the integral closure in $F^{H_\pi}$.  Its field
automorphisms preserve that closure, and therefore extend uniquely
across all boundary points.

In additive notation on $C_d^m$, for $h\in H_\pi$,
\[
 (b,\sigma)(0,h)(b,\sigma)^{-1}
  =\bigl(b-(\sigma h\sigma^{-1})b,\ \sigma h\sigma^{-1}\bigr).
\]
Thus $\sigma$ permutes Young blocks of equal size, and $b$ is
constant on each block since its transpositions generate $H_\pi$.
These conditions are also sufficient.  Modulo internal permutations,
they give one $C_d$ per type; permutations of equal-sized blocks
supply the splitting in \eqref{v6:spec:normalizer-formula}.
For singleton blocks the constancy condition is vacuous, so they
cause no exception.  The final assertion follows from \cite[Proposition~5.1]{LiProfileGeometry}:
conjugacy of these Young subgroups is equivalent to equality of their
block-size multisets.  The base morphism is fixed throughout, and
no sparse-range inequality is needed.
\end{proof}

\subsection{Rigidity with the relation base fixed}
\label{r20:spec:relation-rigidity-route}
Here the singleton is present but not premarked.  Unlike the calibrated
case, identifying it uses the weight line, seen at moment order $d$.

\begin{theorem}[Rigidity with a singleton and the relation map]
\label{v6:spec:relation-rigidity}
Fix $d\ge2$ and the relation base $\PP^m$.  Let $\pi,\pi'$ be
profiles consisting of distinct nonempty aperiodic necklace types,
each containing the singleton necklace.  Suppose every type on each
side has multiplicity at least $d$.  Then the following are equivalent:
\begin{enumerate}[label=\textup{(\roman*)}]
\item $Y_\pi$ and $Y_{\pi'}$ admit a relation-compatible image
isomorphism;
\item there is a bijection of their necklace types preserving both
the necklace classes and their multiplicities.
\end{enumerate}
No separate marking of the singleton factor or of the normalization
coordinates is required.
\end{theorem}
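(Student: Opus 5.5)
For (ii)\(\Rightarrow\)(i) I would build the isomorphism directly. Given a bijection \(\beta\) of types with \(S'_{\beta(\nu)}=S_\nu+r_\nu\) (rotation representatives) and \(n'_{\beta(\nu)}=n_\nu\), let \(\widetilde\psi\) be the automorphism of Proposition~\ref{v6:spec:normalizer} that permutes the type factors by \(\beta\) and multiplies each parameter \(u_{\nu,i}\) by \(\zeta^{-r_\nu}\). Since \(d\)th powers are unchanged, \(\widehat q_{\pi'}\widetilde\psi=\widehat q_\pi\). By \eqref{v6:spec:mixed-map}, each factor \(\prod_{a\in S_\nu}(vX-\zeta^a uZ)\) goes to the corresponding factor for \(S_\nu+r_\nu\). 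Hence \(f_{\pi'}\widetilde\psi=f_\pi\) after relabelling, and \(\psi=\mathrm{id}\) on the common image, whose normalization lift is \(\widetilde\psi\).

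For (i)\(\Rightarrow\)(ii), let \(\psi\) be relation-compatible with lift \(\widetilde\psi\). By Proposition~\ref{v6:spec:normalizer}, \(\widetilde\psi\) is an isomorphism \(\mathcal N_\pi\to\mathcal N_{\pi'}\) over \(\PP^m\), so the multiplicity multisets agree. After composing with an explicit isomorphism of the kind built in the first part (permuting factors and rotating), I may assume \(\mathcal N_\pi=\mathcal N_{\pi'}\) and \(\widetilde\psi\in C_d^t\rtimes\prod\mathfrak S\). Composing once more with an element of that group, I may take \(\widetilde\psi=\mathrm{id}\), replacing \(\pi'\) by a profile with the same necklace classes and multiplicities but different representatives and ordering. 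The problem is now exactly the situation of Theorem~\ref{v6:spec:mixed-classification}: an image isomorphism lifting the identity on \(\mathcal N\), with every \(n_\nu\ge d\). Its necessity part gives \(\ell_{\boldsymbol S}(j)=\ell_{\boldsymbol T}(j)\) for all \(j\in C_d\), including the weight line at \(j=0\).

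The remaining step, which I expect to be the main obstacle, is to recover the masks from the lines without a premarked singleton. The weight line \(K(|S_\nu|)_\nu=K(|T_\nu|)_\nu\) is the key. The ordered lists \(S\) and \(T\) index the same factors, and each contains exactly one singleton type, since the types are distinct necklaces. I would argue that the singleton positions coincide, because the singleton is the unique coordinate of minimal weight \(1\) on each side. Proportionality gives \(|T_\nu|=c|S_\nu|\), and if the singleton sits at \(\nu_0\) on one side, then \(|T_{\nu_0}|=c\) and \(|S_{\nu_1}|=1/c\) at the other side's singleton position; integrality of all weights forces \(c=1\). With equal positions, for each \(j\ne0\) the line has the representative whose \(\nu_0\)-coordinate is \(\zeta^{r j}\) with \(r\) the singleton's position. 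Rotating each side so that its singleton is \(\{0\}\), which is permitted by the \(C_d^t\) freedom and changes each other type only within its necklace class, the argument of Corollary~\ref{v6:spec:singleton-marked} applies with \(n_0\ge d\ge b(d)\). The calibration and inversion \eqref{v6:spec:fourier-inversion} then give \(S_\nu=T_\nu\) up to these rotations, which is (ii).

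The subtle point to check carefully is that normalizing the singleton by a per-type rotation is compatible with an identity lift. The rotation is an automorphism over \(\PP^m\), so I would absorb it into \(\widetilde\psi\) before invoking the identity-lift classification, rather than applying it afterwards.
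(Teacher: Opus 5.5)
Your route is essentially the paper's. Replacing $\pi'$ by a profile $\pi''$ with types $S'_{\sigma(\nu)}+c_\nu$, so that the lift becomes the identity, is equivalent to the paper's relation $\lambda'_{\sigma(\nu),j}\zeta^{c_\nu j}=a_j\lambda_{\nu,j}$. Your weight-line step ($c$ and $1/c$ both positive integers) is the paper's $a_d=1$ argument, and your converse construction is the paper's.

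The one step that fails as written is the final normalization: ``rotating each side so that its singleton is $\{0\}$'' using the $C_d^t$ freedom, with the rotation absorbed into $\widetilde\psi$ beforehand.
\begin{itemize}
\item Once you have used $C_d^t\rtimes\prod\mathfrak S$ to make the lift the identity, the singleton of $\pi''$ is $\{c_{\nu_0}\}$. This holds even if both original singletons were normalized to $\{0\}$.
\item The rotation $c_{\nu_0}$ is dictated by $\psi$. Take $\pi'=\pi$ and the relation-compatible automorphism $[G(X,Z)]\mapsto[G(X,\zeta Z)]$; its lift rotates every parameter by $\zeta$. Then $\pi''$ has types $S_\nu+1$ and singleton $\{1\}$.
\item Rotating back only the singleton factor makes the lift nonidentity on that factor, so Corollary~\ref{v6:spec:singleton-marked} no longer applies. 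Normalizing first and reducing afterwards just reintroduces the rotation.
\end{itemize}

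The repair is short; either of the following works.
\begin{itemize}
\item Rotate \emph{all} types of one side by a common $c$, and compose $\psi$ with the ambient linear map $[G(X,Z)]\mapsto[G(X,\zeta^{c}Z)]$. Since $f_{T+c}=\rho_c f_T$ for this map $\rho_c$, identity lifts are preserved.
\item Skip the normalization. With $S_{\nu_0}=\{r\}$ and $T_{\nu_0}=\{r'\}$, the singleton coordinate of the equal lines forces the proportionality constant $a_j=\zeta^{(r'-r)j}$. Hence $\lambda_j(T_\nu)=\zeta^{(r'-r)j}\lambda_j(S_\nu)$ for $j\ne0$, and the weights agree at $j=0$. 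Fourier inversion then gives $T_\nu=S_\nu+(r'-r)$ for every $\nu$, which is (ii).
\end{itemize}
The second option is exactly the paper's calibration $a_j=\zeta^{c_0j}$, which yields $S'_{\sigma(\nu)}=S_\nu+(c_0-c_\nu)$.
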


\begin{proof}
\emph{Allowed lifts.}
An isomorphism in (i) first identifies the normalized relation covers.
Proposition~\ref{v6:spec:normalizer} recovers the multiplicity multiset,
so after matching factors its lift has the form of a permutation
$\sigma$ preserving multiplicities and rotations
$u_{\nu,i}\mapsto\zeta^{c_\nu}u_{\nu,i}$.  The point where all
parameters vanish is the unique point over the all-zero relation
divisor, and is preserved.  It is also the unique point over its image
under $f_\pi$.  On cotangent coordinates the lift pulls back
$e'_{\sigma(\nu),j}$ to $\zeta^{c_\nu j}e_{\nu,j}$; all these
coordinates occur for $1\le j\le d$ by the multiplicity bound.

\emph{Identifying the singleton.}
Equality of the two completed image subrings at this point, pulled back
by the lift, gives from \eqref{v6:spec:mixed-cotangent} nonzero scalars
$a_j$ such that
\begin{equation}\label{v6:spec:rigidity-fourier}
 \lambda'_{\sigma(\nu),j}\zeta^{c_\nu j}
       =a_j\lambda_{\nu,j}\qquad(1\le j\le d).
\end{equation}
The singleton ensures the relevant lines are nonzero.  At $j=d$ the
rotations disappear and $|S'_{\sigma(\nu)}|=a_d|S_\nu|$.  The scalar
$a_d$ and its inverse are positive integers, since each weight list
contains one.  Thus $a_d=1$; no ordering of $K$ is used.  Distinctness of the necklace
types makes the weight-one type unique, so $\sigma$ sends the singleton
to the singleton.  This uses the frequency-zero line at weight $d$.

\emph{Recovering the necklace classes.}
Choose its representative on both sides to be $\{0\}$ and call its
index $0$.  The singleton coordinate in
\eqref{v6:spec:rigidity-fourier} gives $a_j=\zeta^{c_0j}$, and hence
\[
 \lambda'_{\sigma(\nu),j}
 =\zeta^{(c_0-c_\nu)j}\lambda_{\nu,j}\qquad(1\le j\le d).
\]
Fourier inversion now yields
$S'_{\sigma(\nu)}=S_\nu+(c_0-c_\nu)$ in $C_d$.  The necklace
classes and the multiplicities agree, proving (ii).

\emph{The converse construction.}
Conversely,
permuting the types and absorbing rotations of their representatives
by inverse parameter rotations gives the same output forms on the
entire products, including zero and infinity, and preserves their
$d$th powers.  The resulting image isomorphism therefore identifies
the required diagrams, not merely their point fibers.
\end{proof}

In the same range, the relation-compatible automorphisms of a fixed
$Y_\pi$ are exactly the diagonal $C_d$.  Indeed the proof forces
$\sigma$ to preserve each distinct necklace class, hence $\sigma$ is
the identity; aperiodicity then gives $c_\nu=c_0$ for every type.
Conversely, simultaneous rotation induces
$[G(X,Z)]\mapsto[G(X,\zeta^{c_0}Z)]$ and fixes $\widehat q_\pi$.
This action is faithful because $f_\pi$ is birational.

The sufficient stable bound is not asserted optimal.  The relation
map remains part of the classified diagram.

\begin{example}[The relation map need not descend to the image]
\label{v6:spec:no-relation-descent}
Take $d=3$, the singleton $\{0\}$ and pair $\{0,1\}$, each with
multiplicity three.  Then
$\mathcal N=\PP^3\times\PP^3$, $m=6$, $r=9$, and
$I=3\{0,\ldots,6\}$.  At one normalization point put two singleton
parameters at infinity and all other parameters at zero.  At a second
point put one pair parameter at infinity and all others at zero.
Both output forms are $[X^7Z^2]$.  Their primitive relation divisors
have infinity multiplicities two and one, respectively.  Therefore
$\widehat q$ is not constant on the normalization fiber and cannot
descend to a morphism from $Y$.  The diagram in
Theorem~\ref{v6:spec:relation-rigidity} deliberately places the relation
map on the normalization.
\end{example}

\begin{example}[A single probe already separates equal numerical data]
\label{v6:spec:small-probe}
Take $d=12$, and pair the singleton type with either mask from
Example~\ref{v6:spec:forgotten-mask}, each type occurring once.  Both
profiles have $I=\{0,12,24\}$ and $r=5$, and their common data are
\[
 D=288,\qquad (\delta_0,\delta_1,\delta_2)=(288,60,8),\qquad
 \mathcal N=\PP^1\times\PP^1,
 \quad \eta=(1,4),\quad\xi=(12,12).
\]
Nevertheless their normalized relation correspondences are not
isomorphic.  The cover automorphism group is
$C_{12}^2\rtimes\mathfrak S_2$.  On the first cotangent Fourier line
at the total-zero point, such an automorphism can only multiply the
ratio of the two coordinates by a twelfth root of unity or invert the
ratio.  For a complex realization of $\zeta$ one computes
\[
 |\lambda_1(S)|^2=6+3\sqrt3,\qquad
 |\lambda_1(T)|^2=3.
\]
These numbers are neither equal nor reciprocal.  The claimed
isomorphism is therefore impossible.  The forbidden equalities lie in the cyclotomic field, so their failure
over $\mathbb C$ rules them out over every algebraically closed
characteristic-zero field.
Here a singleton of multiplicity one separates the normalized
relation correspondences despite equality of the homogeneous
marked images; the stable bound of
Theorem~\ref{v6:spec:relation-rigidity} is unchanged.
\end{example}

We next separate point identifications from the functions that descend.

\section{Seminormal descent and homogeneous conductor models}
\label{v6:cond:section}\label{v7:sec:descent}

Work over an algebraically closed characteristic-zero field $k$, with
reduced profile images.  We recover homogeneous seminormal images from
point relations and compute descent in two explicit families.

\subsection{Point relations and the descent data}
\label{r21:descent:point-data}

Let $\nu:N\to Y$ be a finite normalization and factor it through the
seminormalization $\pi:Y^{\mathrm{sn}}\to Y$.  On $Y$ put
\begin{equation}\label{v7:descent:layers}
 \mathcal Q=\nu_*\mathcal O_N/\mathcal O_Y,\qquad
 \mathcal Q_{\mathrm{inf}}=\pi_*\mathcal O_{Y^{\mathrm{sn}}}/\mathcal O_Y,
 \qquad
 \mathcal Q_{\mathrm{glue}}=\nu_*\mathcal O_N/\pi_*\mathcal O_{Y^{\mathrm{sn}}}.
\end{equation}
There is a natural exact sequence
\begin{equation}\label{v7:descent:extension}
 0\longrightarrow\mathcal Q_{\mathrm{inf}}
 \longrightarrow\mathcal Q\longrightarrow\mathcal Q_{\mathrm{glue}}
 \longrightarrow0.
\end{equation}
Its annihilator $\mathfrak c=\operatorname{Ann}_{\mathcal O_Y}\mathcal Q$
is the conductor.  In affine notation $A\subset B$, the classical
reduced-relation equalizer and conductor square are
\begin{equation}\label{v7:descent:classical}
 A^{\mathrm{sn}}=
 \operatorname{Eq}\bigl(B\rightrightarrows(B\otimes_A B)_{\mathrm{red}}\bigr),
 \qquad
 A=B\mathop{\times}_{B/\mathfrak c}A/\mathfrak c.
\end{equation}
These are Manaresi's characteristic-zero reduced-relation construction
\cite[Theorem I.6 and Remark I.7]{Manaresi} and Ferrand's
conductor square \cite[Lemma 1.3]{Ferrand}.  Indeed, the equalizer $C\subset B$ is a finite
$A$-module.  Every geometric $C$-point lifts to $B$ by integrality,
and the equalizer makes lifts over the same $A$-point agree on $C$.
Thus $\Spec C\to\Spec A$ is a finite universal homeomorphism; its
residue extensions are trivial in characteristic zero, so it is
subintegral.  If $b^2,b^3\in C$ in the common total quotient ring,
normality gives $b\in B$.  Its two images in
$(B\otimes_A B)_{\mathrm{red}}$ have equal squares and cubes and hence
agree modulo every prime.  The square-and-cube criterion proves that
$C$ is seminormal.  The second equality follows from the definition of
$\mathfrak c$.

The remaining descent data are the embedded boundary subring
$A/\mathfrak c\subset B/\mathfrak c$ and the possibly nonsplit extension
\eqref{v7:descent:extension}.

For an aperiodic nonempty mask $S\subset C_d$, let $C_S$ be the integer
matrix in \eqref{v6:spec:circulant}.  Define the finite set
\begin{equation}\label{v7:descent:bounded-kernel}
 \mathcal K_m(S)=\{v\in\ker_{\mathbb Z}C_S:
       \|v^+\|_1=\|v^-\|_1\le m\}.
\end{equation}
The equality of the two masses follows from the nonzero column sum $|S|$.

\begin{theorem}[Bounded relations and seminormalization]
\label{v7:descent:seminormal-classification}
Fix $d$, $\zeta$, and $N=\sym^m\PP^1$.  For nonempty aperiodic masks
$S,T\subset C_d$, the following are equivalent:
\begin{enumerate}[label=\textup{(\roman*)},leftmargin=2em]
\item $\mathcal K_m(S)=\mathcal K_m(T)$;
\item $\nu_S$ and $\nu_T$ have the same geometric point fibers on $N$;
\item $Y_{S,m}^{\mathrm{sn}}$ and $Y_{T,m}^{\mathrm{sn}}$ are isomorphic
compatibly with this fixed normalization.
\end{enumerate}
There is no stable-range assumption on $m$.
\end{theorem}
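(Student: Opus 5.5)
We prove (i)$\Leftrightarrow$(ii) combinatorially from the fiber description before Theorem~\ref{v6:spec:completed-model}, and (ii)$\Leftrightarrow$(iii) from the reduced-relation equalizer \eqref{v7:descent:classical}.

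\emph{Step 1: (i)$\Leftrightarrow$(ii).} Two points of $N$ lie in the same $\nu_S$-fiber exactly when their endpoint multiplicities agree (divide by $w$) and, on each nonzero $\mu_d$-orbit, their count vectors $n,n'\in\mathbb Z_{\ge0}^d$ satisfy $C_Sn=C_Sn'$. Put $v=n-n'$. Then $v\in\ker_{\mathbb Z}C_S$, and $\|v^+\|_1\le\sum_c n_c\le m$. Conversely, take $v\in\mathcal K_m(S)$ and set $n=v^+$, $n'=v^-$, each of total mass at most $m$. Place these on a single nonzero orbit, and put the remaining $m-\|v^+\|_1$ points at $0$ in both configurations. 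This gives two distinct points of $N$ with the same $\nu_S$-image. Hence the set $\mathcal K_m(S)$ determines the fiber relation on $N$. For the converse direction, run the same construction on both sides. If $\mathcal K_m(S)\ne\mathcal K_m(T)$, choose a $v$ lying in one set but not the other. The two configurations built from $v$ are then identified by one map but not by the other. Note that the endpoints and the other orbits contribute identically on both sides, since the condition splits orbit by orbit.

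The one subtlety here is the coupling through the global count $m$. A fiber with several occupied orbits uses total mass $\le m$ in aggregate. Equality of the bounded kernels for the whole mass bound still suffices, because each orbit's difference vector has $\|v^+\|_1\le m$. The fiber relation is then the product of the per-orbit relations, together with the endpoint condition.

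\emph{Step 2: (ii)$\Rightarrow$(iii).} By \eqref{v7:descent:classical}, the seminormalization is determined, affine-locally on $Y$, by the equalizer of $B\rightrightarrows(B\otimes_AB)_{\mathrm{red}}$. In characteristic zero over algebraically closed $k$, this equalizer consists of the functions on $N$ that are constant on geometric fibers. More precisely, $(B\otimes_AB)_{\mathrm{red}}$ is the reduced closed subscheme $\mathcal R\subset N\times N$ whose points are the pairs in a common fiber, so the equalizer depends only on the reduced equivalence relation $\mathcal R$.

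To globalize without reference to $Y$, view $Y^{\mathrm{sn}}$ as the categorical quotient of $N$ by the finite set-theoretic relation $\mathcal R$. Concretely, the sheaf of functions on $N$ whose two pullbacks agree on $\mathcal R_{\mathrm{red}}$ is pushed to a topological space that is the quotient of $N$, since $N\to Y^{\mathrm{sn}}$ is finite and surjective. If $\mathcal R_S=\mathcal R_T$ as reduced subschemes of $N\times N$, then the fiber relations coincide. Both seminormalizations are then the same ringed-space quotient of $N$, compatibly with $N$.

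The expected main obstacle is making this quotient intrinsic without passing through $Y$. The plan is to use the finite graph argument of Section~\ref{r20:spec:classification-route} in its seminormal version:
\begin{enumerate}[label=\textup{(\alph*)}]
\item Let $Z$ be the reduced image of $(\nu_S,\nu_T)$. Under (ii), the projections $Z\to Y_{S,m}$ and $Z\to Y_{T,m}$ are finite and bijective on points, with trivial residue extensions, hence subintegral.
\item Seminormalization is invariant under subintegral maps, so $Y_{S,m}^{\mathrm{sn}}\simeq Z^{\mathrm{sn}}\simeq Y_{T,m}^{\mathrm{sn}}$, compatibly with $N$.
\end{enumerate}

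\emph{Step 3: (iii)$\Rightarrow$(ii).} An isomorphism compatible with $N$ lies under the identity of $N$. The geometric fibers of $N\to Y^{\mathrm{sn}}$ coincide with those of $N\to Y$, because $Y^{\mathrm{sn}}\to Y$ is bijective on points. Hence the two point fibers agree. Since only finite combinatorics and the universal-homeomorphism property enter, no stable-range assumption on $m$ is used.
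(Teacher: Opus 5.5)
Your proposal is correct and follows the paper's proof: the same orbit-by-orbit kernel argument, with $v^+$ and $v^-$ placed on one nonzero orbit and padded by zero parameters, for (i)$\Leftrightarrow$(ii), and a finite graph argument for (ii)$\Rightarrow$(iii). The only variation is in (ii)$\Rightarrow$(iii): you take the graph in $Y_{S,m}\times Y_{T,m}$ and use invariance of seminormalization under finite subintegral birational maps, whereas the paper takes it in $Y_{S,m}^{\mathrm{sn}}\times Y_{T,m}^{\mathrm{sn}}$ and uses that a reduced scheme finite and subintegral over a seminormal scheme equals it. The equalizer/quotient discussion before your step (a) is not needed.
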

\begin{proof}
\emph{Point relations.}
On a nonzero $\mu_d$-orbit, two multiplicity vectors give the same
output exactly when $C_S(n-n')=0$.  Their difference belongs to
\eqref{v7:descent:bounded-kernel}; zero and infinity counts are separately
recovered by dividing output multiplicities by $|S|$.  This proves
(i)$\Rightarrow$(ii).  Conversely, realize $v^+,v^-$ in one nonzero orbit
and pad with $m-\|v^+\|_1$ zero parameters.  Equality of the fibers forces
$C_Tv=0$, and interchanging the masks proves (i).  The argument works
after every algebraically closed extension of $k$.

\emph{Seminormal descent.}
For (ii)$\Rightarrow$(iii), the finite surjections
$N\to Y_S'=Y_{S,m}^{\mathrm{sn}}$ and $N\to Y_T'=Y_{T,m}^{\mathrm{sn}}$
have the original point fibers, since seminormalization is subintegral.
Take the reduced image $Z$ of $N\to Y_S'\times Y_T'$.
Each projection is surjective with singleton geometric fibers: the other
map is constant on each normalization fiber.  Properness and
quasi-finiteness make the projections finite; characteristic zero then
makes their residue extensions trivial, so they are subintegral.
A reduced scheme finite and subintegral
over a seminormal scheme equals that scheme \cite[Tag 0H3G]{Stacks}.
Both projections are therefore isomorphisms, compatibly with $N$.
Conversely, (iii) equates the seminormal and hence original fibers.
Uniqueness holds on the dense normalization-isomorphism open.
\end{proof}

This classification retains the specified normalization and forgets
polarizations.  For mixed profiles, global multiplicities of each type
also constrain which choices on different output orbits are feasible.

\subsection{Finite-output recovery and spectral semigroup descent}
\label{r21:cond:semigroup-route}

Fix a homogeneous aperiodic mask $S$ of weight $w$.  Write
$\lambda_j=\sum_{a\in S}\zeta^{aj}$ and retain the exact Newton algebra
\begin{equation}\label{v6:cond:newton-algebra}
 A_{S,m}=k[\lambda_jP_j:j\ge1]\subset B=k[e_1,\ldots,e_m],
 \qquad P_j=\sum_{i=1}^m u_i^j.
\end{equation}
Only the first $mw$ output Newton sums are needed to generate this ring.
Assume $m\ge2$ and
\begin{equation}\label{v7:cond:spectral-hypothesis}
 \lambda_j\ne0\quad\text{for every positive integer }j\text{ with }m\nmid j.
\end{equation}
This is a finite periodic condition, checked modulo $\operatorname{lcm}(d,m)$.
If a zero coefficient exists, periodicity in $d$ forces $m\mid d$.
Put
\begin{equation}\label{v7:cond:general-semigroup}
 q=e_m,\quad J=(e_1,\ldots,e_{m-1})B,\quad
 \Gamma=\langle a\ge1:\lambda_{ma}\ne0\rangle,\quad
 g=\gcd\Gamma,\quad\Delta=g^{-1}\Gamma,\quad
 \delta=\#(\mathbb N\setminus\Delta).
\end{equation}
Here $0\in\mathbb N$ and $c(\Gamma)=\min\{c:c+\mathbb N\subset\Gamma\}$
when $g=1$, with $c(\mathbb N)=0$.

\begin{theorem}[Spectral semigroup descent]\label{v7:cond:semigroup}
Under \eqref{v7:cond:spectral-hypothesis},
\begin{align}
 A_{S,m}&=k[q^\Gamma]+J,
 &A_{S,m}^{\mathrm{sn}}&=k[q^g]+J,\label{v7:cond:semigroup-algebra}\\
 (A_{S,m}:B)&=
 \begin{cases}J,&g>1,\\(J,q^{c(\Gamma)})B,&g=1.\end{cases}
 &&\label{v7:cond:semigroup-conductor}
\end{align}
For the entire projective image $Y=Y_{S,m}$,
\begin{equation}\label{v7:cond:semigroup-hilbert}
 P_Y(n)=\binom{wn+m}{m}-w(1-g^{-1})n-2\delta.
\end{equation}
The variety is seminormal exactly when $\Gamma=g\mathbb N$.
Let $\Lambda\subset\PP^m$ be the line in the normalization
on which all middle coefficients vanish.
At a nonnormal closed point $y\in Y$, the local depth is one
if $\Delta\ne\mathbb N$ and $y$ is the image of an endpoint
of $\Lambda$; in every other case it is $\min(m,2)$.  Consequently, for $m=2$ local
Cohen--Macaulayness is equivalent to seminormality, whereas for $m\ge3$
the image is everywhere locally Cohen--Macaulay exactly when
$\Gamma=\mathbb N$, that is, when $Y\simeq\PP^m$.
\end{theorem}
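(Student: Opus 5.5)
The plan is to first establish the affine formula \eqref{v7:cond:semigroup-algebra} on the monic chart. All remaining assertions then follow from it, from the conductor square \eqref{v7:descent:classical}, and from gluing the two endpoint charts of the line $\Lambda$.

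\emph{Step 1: $J\subset A_{S,m}$.} By \eqref{v7:cond:spectral-hypothesis}, $A=A_{S,m}$ contains every $P_j$ with $m\nmid j$. In particular it contains $P_1,\dots,P_{m-1}$, and Newton's identities then give $e_1,\dots,e_{m-1}\in A$. Since $J$ is generated as a $k$-space by monomials $e_r\,e^{\alpha}q^{k}$ with $1\le r\le m-1$, it suffices to show $e_rq^k\in A$ for all $k\ge0$ and $1\le r\le m-1$. Products of such elements then cover the rest of $J$.

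For this I would use $P_{km+r}\in A$. I would show that
\[
 P_{km+r}\equiv c_{k,r}\,e_rq^k \pmod{J^2},\qquad c_{k,r}\in k^\times .
\]
The congruence should follow from the Newton recurrence $P_n=\sum_i(-1)^{i-1}e_iP_{n-i}$ reduced modulo $J^2$. Modulo $J$ one has $P_n\equiv0$ for $m\nmid n$ and $P_{am}\equiv m(\pm q)^a$, and this makes the recurrence for the $J$-linear part a two-term recursion. Solving it gives $c_{k,r}$ as a nonzero rational number, essentially $\pm(km+r)/m$ times a sign; this uses characteristic zero.

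An induction on total degree (weighting $e_i$ by $i$) then finishes Step 1. The error term lies in $J^2$ of lower $q$-degree relative to the total degree, so it is a sum of products of elements of $J$ of smaller degree. These lie in $A$ by induction.

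\emph{Step 2: the boundary ring.} Since $J\subset A$, the ring $A$ is determined by its image in $B/J=k[q]$. Geometrically, $B/J$ is the line $\Lambda$ in the monic chart, where the roots are $u_i=\zeta_m^i t$. On it, $P_j\mapsto0$ for $m\nmid j$ and $P_{ma}\mapsto \pm m\,q^a$. Hence the image of $A$ is $k[q^a:\lambda_{ma}\ne0]=k[q^\Gamma]$, which gives $A=k[q^\Gamma]+J$.

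The conductor is then $J$ plus the conductor of $k[q^\Gamma]\subset k[q]$. That conductor is $0$ if $g>1$, since then $k[q]$ is not even birational over $k[q^\Gamma]$, and it is $q^{c(\Gamma)}k[q]$ if $g=1$. This gives \eqref{v7:cond:semigroup-conductor}.

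For the seminormalization I would apply the equalizer of \eqref{v7:descent:classical}. The gluing is entirely inside $\Lambda$, and the seminormalization of the monomial ring $k[q^\Gamma]$ is $k[q^g]$, because it has the same point fibers and is seminormal. Equivalently, one can use Theorem~\ref{v7:descent:seminormal-classification}'s point-fiber description. Seminormality of $Y$ then holds exactly when $\Gamma=g\mathbb N$.

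\emph{Step 3: projective and local statements.} The singular locus meets only $\Lambda$. By the symmetry $j\mapsto-j$ (Galois conjugacy of $\lambda_{-j}$ and $\lambda_j$), the reciprocal chart at infinity gives the same model at the other endpoint of $\Lambda$.

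For the Hilbert polynomial, I would compute $h^0$ of the pullback $\mathcal O(wn)$ minus $\dim$ of the degree-$n$ part of $\mathcal Q=\nu_*\mathcal O_{\PP^m}/\mathcal O_Y$. That quotient is supported on $\Lambda\simeq\PP^1$, where the restricted sections form a space of degree $wn$ monomials $q^a$ with $0\le ma\le wn$-type range. The missing exponents are those not in $\Gamma$ at the $0$ end and, symmetrically, those missing at the $\infty$ end. Counting non-multiples of $g$ gives the linear term $w(1-g^{-1})n$, and the two gap sets of $\Delta$ give $2\delta$. The exact normalization of the degree of $q$ relative to $n$ must be checked against $\eta=wH$.

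For depth, I would use the conductor square
\[
 0\to A\to B\oplus A/\mathfrak c\to B/\mathfrak c\to0
\]
together with the depth lemma. Here $B$ is regular of dimension $m$, and $B/J\simeq k[q]$ has dimension $1$. Away from the endpoint images, the boundary is $k[q^g]\subset k[q]$ locally, and one gets depth $\min(m,2)$. At an endpoint image with $\Delta\neq\mathbb N$, the boundary local ring $k[q^\Gamma]$ is a nonnormal curve singularity. The square then forces a local cohomology class in degree $1$, so the depth is $1$.

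The Cohen--Macaulay consequences follow by comparing these depths with $\dim=m$.

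The main obstacle I expect is Step 1, namely the precise $J$-linear computation with $c_{k,r}\ne0$ and the degree induction, together with the depth computation at the endpoints. For the latter I would first try an explicit local cohomology calculation in the graded setting, using the split of $A$ as $k[q^\Gamma]\oplus J$.
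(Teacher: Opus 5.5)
Your Steps 1 and 2 are correct, and Step 1 follows a somewhat different route from the paper. You use every $P_{km+r}$, which lies in $A$ by Proposition~\ref{v6:spec:monic-algebra} and \eqref{v7:cond:spectral-hypothesis}. You combine this with $P_{km+r}\equiv\pm(km+r)\,e_rq^k\pmod{J^2}$ and a weight induction; the constant is $\pm(km+r)$, not divided by $m$, but only its nonvanishing matters. The paper instead recovers $e_jq^a$ only for $a<b=\min(\Gamma\setminus\{0\})$, then obtains $q^b$ from $P_{mb}$, and multiplies by powers of $q^b$. That certifies generation by the finite list $Q_1,\dots,Q_{mw}$; your route is shorter and does not give this. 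Two further substitutions are also valid: your equalizer computation of $A^{\mathrm{sn}}$ from closed-point fibers replaces the paper's square-and-cube and subintegrality argument, and your direct count of sections whose restriction to $\Lambda$ lies in the span of the $q^i$ with $i,\,wn-i\in\Gamma$ replaces its Euler-characteristic computation with $p_a(C)=2\delta$.

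The genuine gap is in Step~3. For $m\ge2$, the monic chart (no input at $\infty$) and the reciprocal chart (no input at $0$) do not cover $\PP^m$. Points with inputs at both $0$ and $\infty$ lie in neither chart, and the symmetry $j\mapsto-j$ says nothing about them. So your claim that the nonnormal locus meets only $\Lambda$ is unproved there. Yet the Hilbert polynomial, global seminormality, and the everywhere-Cohen--Macaulay criterion all need $\mathcal Q=0$ at those points.

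The paper closes this by Hensel-splitting the output into its factor at infinity and its finite factor. Each factor comes from a homogeneous profile with fewer than $m$ copies. The coefficients it requires lie among $\lambda_{\pm1},\dots,\lambda_{\pm(m-1)}$, which are nonzero by \eqref{v7:cond:spectral-hypothesis}. Corollary~\ref{v6:norm:homogeneous-threshold} then makes both truncated maps isomorphisms. Hence the fiber is a single point, and Theorem~\ref{v6:spec:completed-model} gives a regular completed ring.

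Three smaller repairs are also needed.
\begin{enumerate}
\item The coordinate $q=a_m/a_0$ has degree one on the line $\Lambda$. So the sections of $\nu^*\mathcal O_Y(n)|_\Lambda=\mathcal O_\Lambda(wn)$ are the $q^i$ with $0\le i\le wn$, not $0\le mi\le wn$.
\item Your count needs $g\mid w$. This holds because the last output coefficient is a nonzero multiple of $q^w$, so $w\in\Gamma$. For $n\gg0$, exactly $wn/g+1-2\delta$ indices then survive.
\item The depth lemma applied to the conductor-square sequence only gives depth at least one, since $B\oplus A/\mathfrak c$ has depth at most one. Either carry out the local-cohomology sequence, or, as the paper does, apply the depth lemma to $0\to A\to B\to B/A\to0$ with $B/A=(k[q^g]/k[q^\Gamma])\oplus\bigoplus_{r=1}^{g-1}q^rk[q^g]$. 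Its finite-length summand gives depth one exactly at the endpoint images when $\Delta\ne\mathbb N$. Its depth-one free part gives $\min(m,2)$ at every other nonnormal point.
\end{enumerate}
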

\begin{proof}
\emph{Boundary restriction.}
Put $A=A_{S,m}$ and $R=mw$.  Proposition~\ref{v6:spec:monic-algebra}
gives $A=k[Q_1,\ldots,Q_R]$, with $Q_j=\lambda_jP_j$ and all higher
outputs in this algebra.  Modulo $J$,
$P_{ma}=m(-1)^{a(m-1)}q^a$ and $P_j=0$ for $m\nmid j$.  Thus, before
assuming $J\subset A$, the image of $A$ in $B/J$ is $k[q^\Gamma]$, where
\[
 \Gamma=\langle a:1\le a\le w,\ \lambda_{ma}\ne0\rangle.
\]
The last output coefficient, not necessarily $Q_R$, is a nonzero
multiple of $q^w$.  Hence $w\in\Gamma$, $g\mid w$, and
$b=\min(\Gamma\setminus\{0\})$ satisfies $1\le b\le w$.
Minimality of $b$ gives $\lambda_{mb}\ne0$.

\emph{Finite-output recovery.}
Give $e_i$ weight $i$.  Expanding
$-z\,\frac{d}{dz}\log(1-e_1z+\cdots+(-1)^mqz^m)$ yields
\[
 P_{am+j}=(-1)^{a(m-1)+j-1}(am+j)e_jq^a
       +q^aU_{a,j}+\sum_{v<a}q^vV_{a,j,v}
 \qquad(a\ge0,\ 1\le j<m),
\]
where $U_{a,j}\in k[e_1,\ldots,e_{j-1}]$ and
$V_{a,j,v}\in k[e_1,\ldots,e_{m-1}]$ have zero constant term.
The coefficient of $q^a$ has weight $j$ and the displayed $e_j$ term;
lower coefficients have positive weight $m(a-v)+j$.
Newton induction at $a=0$ recovers the $e_j$.  Now induct on $a<b$,
then on $j$.  Each monomial of $q^vV_{a,j,v}$ uses an already recovered
$e_iq^v$, and each of $q^aU_{a,j}$ uses $e_iq^a$ with $i<j$.
Subtracting them recovers $e_jq^a$ from the supported output at
$am+j\le mb-1<R$.  The top term of $P_{mb}$ is
$m(-1)^{b(m-1)}q^b$; its other terms use the recovered $e_iq^v$, $v<b$.
Since $\lambda_{mb}\ne0$ and $mb\le R$, this gives $q^b\in A$.
For $a=kb+r$, $0\le r<b$, we now have
$e_jq^a=(q^b)^ke_jq^r\in A$.  Thus $J\subset A$, and the computed
boundary image proves
$A=k[q^\Gamma]+J$: lift a boundary monomial to $A$ and subtract its
error in $J\subset A$.

\emph{Conductor and seminormalization.}
The conductor reduces modulo $J$ to that of $k[q^\Gamma]\subset k[q]$.
For $g>1$ it is zero, since multiplication by $q$ moves every exponent
out of $g\mathbb N$.  For $g=1$ it is $(q^{c(\Gamma)})$: the condition
for $q^a$ is $a+\mathbb N\subset\Gamma$, and distinct exponents cannot
cancel under translation.  This proves \eqref{v7:cond:semigroup-conductor}.
The ring $A'=k[q^g]+J$ is seminormal.  For $F\in\operatorname{Frac}(B)$
with $F^2,F^3\in A'$, normality puts $F\in B$, and reduction modulo $J$ lies in
$k[q]\cap k(q^g)=k[q^g]$ (with zero treated separately).
The finite extension $A\subset A'$ is subintegral: off $J$ both equal
$B$, and on $J$ it is the subintegral normalization
$k[(q^g)^\Delta]\subset k[q^g]$ of a numerical-semigroup curve.
In the coordinate $q^g$, the fraction fields agree, the map is an
isomorphism away from zero, and the origin has one preimage with residue
field $k$.  This proves the seminormalization formula.

\emph{Projective boundary and polarization.}
Let $\Lambda\simeq\PP^1\subset\PP^m$ be the line with all middle
coefficients zero, with monic ideal $J$.  The reciprocal chart has
the same supported indices by cyclotomic conjugacy.  Points omitted by
both charts have parameters at both zero and infinity.  Hensel
factorization separates finite and infinite output factors; their
input sizes are their output degrees divided by $w$, hence both less
than $m$.  The required Fourier indices lie in $\{1,\ldots,m-1\}$,
so both truncated maps are isomorphisms by
Corollary~\ref{v6:norm:homogeneous-threshold}.  The whole fiber is a
singleton, and Theorem~\ref{v6:spec:completed-model} identifies the
completed image with the regular source completion.  Faithfully flat
completion proves smoothness on the omitted locus, including repeated
finite roots.  On the two standard charts $J\subset A$ gives an
isomorphism off $\Lambda$.

Let $C$ be the reduced image of $\Lambda$ and $\rho:\Lambda\to C$.
On the overlap invert the last output coefficient, a nonzero multiple
of $q^w$.  The reciprocal coordinates $q'=q^{-1}$,
$e_j'=e_{m-j}/q$ identify the middle ideals, and the Laurent equality
below identifies the boundary subrings.  The affine fiber products
therefore glue, together with the omitted smooth locus, to
\begin{equation}\label{v6:cond:surface-fiberproduct}
 \mathcal O_Y=\nu_*\mathcal O_{\PP^m}
       \mathop{\times}_{\rho_*\mathcal O_\Lambda}\mathcal O_C,
\end{equation}
and
\begin{equation}\label{v6:cond:surface-quotient}
 0\longrightarrow\mathcal O_Y\longrightarrow\nu_*\mathcal O_{\PP^m}
 \longrightarrow\rho_*\mathcal O_\Lambda/\mathcal O_C\longrightarrow0.
\end{equation}
Sheaves on $C$ are pushed forward to $Y$.  This fiber product along
$J$ is not necessarily the actual conductor square when $g=1$.
Since $\Delta$ is numerical and $w/g\in\Delta$,
\[
 k[q^\Gamma,q^{-w}]=k[q^g,q^{-g}].
\]
For every integer $n$, choose $k\ge0$ with $n+k(w/g)\in\Delta$;
then $q^{gn}=q^{g(n+k(w/g))}(q^{-w})^k$, proving the Laurent equality.
Thus $C$ has normalization $\PP^1$, no torus defect, and semigroup
$\Delta$ at each endpoint; $\rho$ has degree $g$ and $p_a(C)=2\delta$.
The actual bundle $\mathcal M=\mathcal O_Y(1)|_C$ pulls back with
degree $w$ on $\Lambda$ and $w/g$ on the normalization of $C$.
Hence $\chi(C,\mathcal M^n)=(w/g)n+1-2\delta$.
Twisting \eqref{v6:cond:surface-quotient} subtracts
$(wn+1)-((w/g)n+1-2\delta)$ from
$\chi(\PP^m,\mathcal O(wn))$, proving
\eqref{v7:cond:semigroup-hilbert} with its actual polarization.

\emph{Quotient module and depth.}
The ideal $J$ annihilates $B/A=k[q]/k[q^\Gamma]$, and as a
$k[q^\Gamma]$-module
\[
 B/A=\bigl(k[q^g]/k[q^\Gamma]\bigr)
       \oplus\bigoplus_{r=1}^{g-1}q^r k[q^g].
\]
The first summand has length $\delta$ at the endpoint and vanishes
off it, giving depth zero exactly when $\Delta$ has gaps.
If $\Gamma=g\mathbb N$ and $g>1$, the quotient is free of rank $g-1$
over $k[q^g]$ and has depth one.  After inverting $q^w$ the same free
quotient occurs for $g>1$ and the quotient vanishes for $g=1$.
At a closed point the normalization module has depth $m$: by finiteness,
image parameters remain a system of parameters in each local
normalization ring and form a regular sequence there.
Completion retains their entire product.  The depth lemma \cite[Tag 00LX]{Stacks}
applied to $0\to A\to B\to B/A\to0$ gives depth one when the
finite-length summand is nonzero, and $\min(m,2)$ at the other
nonnormal closed points.  If $\Gamma=\mathbb N$, then $A=B$.
The reciprocal chart and the omitted smooth locus finish the global
seminormality and Cohen--Macaulay assertions.
\end{proof}

\subsection{Surface consequences and separating examples}
\label{r21:cond:surface-consequences}

The surface specialization supplies the algebras used in
Section~\ref{v7:sec:separations}.

\begin{corollary}[The surface specialization]\label{v6:cond:surface}
For $m=2$, write $B=k[p,q]$.  Condition
\begin{equation}\label{v6:cond:odd-condition}
 \lambda_{2j+1}\ne0\quad(j\ge0)
\end{equation}
gives, with
\begin{equation}\label{v6:cond:semigroup}
 \Gamma=\langle j\ge1:\lambda_{2j}\ne0\rangle,\quad
 g=\gcd\Gamma,\quad\delta_{\Gamma'}=\#(\mathbb N\setminus g^{-1}\Gamma),
\end{equation}
\begin{align}
 A_{S,2}&=k[q^\Gamma]+pB,\label{v6:cond:surface-algebra}\\
 (A_{S,2}:B)&=\begin{cases}pB,&g>1,\\(p,q^{c(\Gamma)})B,&g=1,\end{cases}
 \label{v6:cond:surface-conductor}\\
 P_Y(n)&=\binom{wn+2}{2}-w(1-g^{-1})n-2\delta_{\Gamma'},
 \label{v6:cond:surface-hilbert}\\
 Y\text{ locally Cohen--Macaulay}&\ \Longleftrightarrow\
 Y\text{ seminormal}\ \Longleftrightarrow\ \Gamma=g\mathbb N.
 \label{v6:cond:surface-equivalences}
\end{align}
\end{corollary}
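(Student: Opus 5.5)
The plan is to specialize Theorem~\ref{v7:cond:semigroup} to $m=2$ and then sharpen its depth clause, which for $m=2$ gives the equivalence \eqref{v6:cond:surface-equivalences}.

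\emph{Translating the hypotheses.} For $m=2$ we have $B=k[e_1,e_2]$, and we write $p=e_1$, $q=e_2$. The middle ideal $J=(e_1,\ldots,e_{m-1})B$ becomes $pB$. The condition $m\nmid j$ means $j$ is odd, so the spectral hypothesis \eqref{v7:cond:spectral-hypothesis} is exactly \eqref{v6:cond:odd-condition}. The semigroup $\Gamma=\langle a:\lambda_{ma}\ne0\rangle$ becomes $\langle j:\lambda_{2j}\ne0\rangle$, which is \eqref{v6:cond:semigroup}, and $\delta_{\Gamma'}$ is the invariant $\delta$ of the theorem.

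\emph{Direct substitution.} With this dictionary, \eqref{v7:cond:semigroup-algebra}, \eqref{v7:cond:semigroup-conductor} and \eqref{v7:cond:semigroup-hilbert} become \eqref{v6:cond:surface-algebra}, \eqref{v6:cond:surface-conductor} and \eqref{v6:cond:surface-hilbert} verbatim, since the binomial term is $\binom{wn+2}{2}$. The theorem also states that seminormality holds exactly when $\Gamma=g\mathbb N$.

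\emph{The Cohen--Macaulay equivalence.} Since $Y$ is a surface, local Cohen--Macaulayness at a closed point means depth $2$. Normal points are smooth points of $\PP^2$ transported isomorphically, so only the nonnormal points need checking. By the depth clause of Theorem~\ref{v7:cond:semigroup}:
\begin{itemize}
\item at a nonnormal point the depth is $1$ exactly when $\Delta\ne\mathbb N$ and the point is the image of an endpoint of $\Lambda$;
\item at every other nonnormal point the depth is $\min(2,2)=2$.
\end{itemize}
If $\Gamma=g\mathbb N$, then $\Delta=\mathbb N$, so no depth-one point exists and $Y$ is Cohen--Macaulay.

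For the converse, suppose $\Delta\ne\mathbb N$. I must check that the endpoint images are genuinely nonnormal closed points. The affine quotient $B/A$ contains the summand $k[q^g]/k[q^\Gamma]$, which is nonzero of length $\delta>0$ at $q=0$. Hence the image of the origin of the monic chart has depth one, and $Y$ is not Cohen--Macaulay there. Combining both directions gives $\text{CM}\Leftrightarrow\Gamma=g\mathbb N\Leftrightarrow\text{seminormal}$.

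\emph{Main obstacle.} The only delicate point is confirming that the endpoint of $\Lambda$ in the monic chart (the point $p=q=0$) really lies in the nonnormal locus when $\Delta\ne\mathbb N$, so that the depth-one case is not vacuous. The finite-length summand computed in the depth part of the proof of Theorem~\ref{v7:cond:semigroup} gives this directly. The reciprocal endpoint is covered symmetrically by cyclotomic conjugacy, but one endpoint already suffices for the equivalence.
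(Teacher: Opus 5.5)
Your proposal is correct and follows the paper's route. The paper's proof just notes that $J=pB$ when $m=2$, so every clause is the specialization of Theorem~\ref{v7:cond:semigroup}; your re-derivation of the Cohen--Macaulay equivalence from the depth clause is sound but unnecessary, since that theorem already states that for $m=2$ local Cohen--Macaulayness is equivalent to seminormality.
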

\begin{proof}
Here $J=pB$, so each statement is the corresponding specialization of
Theorem~\ref{v7:cond:semigroup}.
\end{proof}

\begin{example}[A bijective normalization with two isolated defects]
\label{v6:cond:isolated}
Take $d=12$ and $S=\{0,2,3,4,6\}$.  Among the cyclotomic factors of
$T^{12}-1$, only $\Phi_6$ divides its mask polynomial.  Thus
$\Gamma=\langle2,3\rangle$ and
\[
 A=k[p,pq,q^2,q^3],\quad(A:B)=(p,q^2)B,\quad
 P_Y(n)=\binom{5n+2}{2}-2.
\]
The normalization is bijective, but the two endpoint local rings have
depth one and are not seminormal.  At each endpoint $B/A$ has basis
$[q]$ and length one, whereas $B/(A:B)$ has length two.
\end{example}

\begin{corollary}[Unbounded boundary-cover degree for surfaces]
\label{v6:cond:halfmask}
For $t\ge3$, the mask $S=\{0,\ldots,t-1\}\subset C_{2t}$ repeated twice
has
\[
 A=k[q^t]+pB,\quad(A:B)=pB,\quad
 \mathcal Q=\mathcal O_C(-1)^{\oplus(t-1)},\quad
 P_Y(n)=\binom{tn+2}{2}-(t-1)n.
\]
Here $C$ is a line and the normalization line covers it with degree $t$.
The mask $S'=\{0,\ldots,t-2,t\}$ repeated twice instead has image $\PP^2$,
with the same normalization polarizations $(tH,2tH)$ and mixed degrees
$(4t^2,2t^2,t^2)$.
\end{corollary}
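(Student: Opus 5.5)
The plan is to reduce everything to Corollary~\ref{v6:cond:surface} by computing the Fourier coefficients of the half-interval mask explicitly, and to get the polarization data for $S'$ from Corollary~\ref{v6:norm:homogeneous-threshold}. Throughout, $d=2t$ and $\zeta^t=-1$.

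\emph{Spectrum of $S$.} First I check that $S=\{0,\ldots,t-1\}$ is aperiodic. It is a proper interval of length $t<2t$, and no nontrivial translation preserves such an interval. For $2t\nmid j$ the geometric series gives
\[
 \lambda_j(S)=\frac{\zeta^{tj}-1}{\zeta^j-1}=\frac{(-1)^j-1}{\zeta^j-1}.
\]
This vanishes for even $j$ and equals $-2/(\zeta^j-1)\neq0$ for odd $j$; for $2t\mid j$ one has $\lambda_j=t$. Hence the odd condition \eqref{v6:cond:odd-condition} holds. Moreover $\lambda_{2j}\ne0$ exactly when $t\mid j$, so $\Gamma=t\mathbb N$, $g=t$, $\Delta=\mathbb N$ and $\delta_{\Gamma'}=0$.

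\emph{Applying the surface corollary.} Substituting these values into \eqref{v6:cond:surface-algebra}--\eqref{v6:cond:surface-hilbert} gives $A=k[q^t]+pB$ and $(A:B)=pB$, since $g>1$. It also gives
\[
 P_Y(n)=\binom{tn+2}{2}-t(1-t^{-1})n=\binom{tn+2}{2}-(t-1)n.
\]
For the geometry of the boundary I use the gluing \eqref{v6:cond:surface-fiberproduct}--\eqref{v6:cond:surface-quotient} from the proof of Theorem~\ref{v7:cond:semigroup}:
\begin{itemize}
\item Since $\Delta=\mathbb N$ and the Laurent equality has no torus defect, $C$ is smooth, so $C\simeq\PP^1$.
\item The polarization $\mathcal M$ has degree $w/g=1$ on $C$, so $C$ is embedded as a line.
\item The map $\rho:\Lambda\to C$ has degree $g=t$ and is locally $q\mapsto q^t$ at both endpoints.
\end{itemize}
Therefore $\mathcal Q=\rho_*\mathcal O_\Lambda/\mathcal O_C$. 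The main step is identifying $\rho_*\mathcal O_{\PP^1}$ for this cyclic cover. I will use the eigen-decomposition under $\mu_t$: the summand spanned by $q^r k[q^t]$ for $0\le r<t$ glues between the two charts (via $q'=q^{-1}$) to $\mathcal O_C(-\lceil r/t\rceil)$. This gives $\rho_*\mathcal O_\Lambda=\mathcal O_C\oplus\mathcal O_C(-1)^{\oplus(t-1)}$, hence $\mathcal Q\simeq\mathcal O_C(-1)^{\oplus(t-1)}$. As a consistency check, the quotient has $\chi=0$ after twisting by $\mathcal O(1)$, and twisting by $n$ reproduces the correction term $(t-1)n$ in $P_Y$.

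\emph{The mask $S'$.} By Corollary~\ref{v6:norm:homogeneous-threshold} with $m=2$, the image is $\PP^2$ if and only if $\lambda_1\lambda_2\ne0$. With $M(T)=\sum_{a\le t-2}T^a+T^t$, I check the two values separately.
\begin{itemize}
\item At $x=\zeta$, which has order $2t$: $M(\zeta)=\dfrac{\zeta^{t-1}-1}{\zeta-1}-1$. This vanishes iff $\zeta^{t-2}=1$, which is impossible because $0<t-2<2t$. This is exactly where $t\ge3$ is needed.
\item At $\theta=\zeta^2$, which has order $t$: $M(\theta)=\dfrac{\theta^{-1}-1}{\theta-1}+1=1-\theta^{-1}\ne0$.
\end{itemize}
For aperiodicity of $S'$: its cyclic gap multiset is $\{1^{t-2},2,t\}$. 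A rotation preserving $S'$ would cyclically permute this gap sequence, and the unique gap $t$ must be fixed, so the rotation is trivial.

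Finally, both $S$ and $S'$ have weight $w=t$, so the same corollary gives $\eta=tH$, $\xi=2tH$, and mixed degrees $\delta_j=(2t)^{2-j}t^j=(4t^2,2t^2,t^2)$. These agree with the data for $S$. The only nonroutine point is the splitting type of $\rho_*\mathcal O$, which I expect to be straightforward via the eigen-decomposition above.
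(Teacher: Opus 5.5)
Your proposal is correct and follows the paper's own proof: you compute the Fourier support of $S$ (odd indices and multiples of $2t$, so $\Gamma=t\mathbb N$), specialize Corollary~\ref{v6:cond:surface} and the boundary quotient \eqref{v6:cond:surface-quotient}, split $\rho_*\mathcal O_{\PP^1}$ for the degree-$t$ power map, and check $\lambda'_1\lambda'_2\ne0$ for $S'$ via Corollary~\ref{v6:norm:homogeneous-threshold}; your eigen-decomposition gluing and aperiodicity checks supply details the paper leaves implicit. One small slip, in your consistency remark only: the Euler characteristic that vanishes is the untwisted $\chi(\mathcal Q)$, whereas $\chi(\mathcal Q\otimes\mathcal O_Y(1))=t-1$, in agreement with $\chi(\mathcal Q(n))=(t-1)n$.
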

\begin{proof}
For $S$, geometric sums give support at the odd integers and multiples
of $2t$, hence $\Gamma=t\mathbb N$.  Corollary~\ref{v6:cond:surface}
gives the algebra, conductor, and Hilbert polynomial.  The boundary power
map has $\rho_*\mathcal O_{\PP^1}=\mathcal O_C\oplus
\mathcal O_C(-1)^{t-1}$, giving the defect by
\eqref{v6:cond:surface-quotient}.  For $S'$, geometric sums give
\[
 \lambda'_2=1-\zeta^{-2}\ne0,\qquad
 \lambda'_1=\frac{1+\zeta^2}{\zeta(1-\zeta)}\ne0.
\]
Corollary~\ref{v6:norm:homogeneous-threshold} proves projective smoothness.
The polarizations give the stated intersection numbers.
\end{proof}

\subsection{The first singular interval for a two-element mask}
\label{r21:cond:interval-route}

For $\{0,1\}\subset C_{2L}$ the first missing Fourier index is $L$.
Theorem~\ref{v7:cond:semigroup} applies at $m=L$; the remaining interval
$L<m<2L$ requires a separate finite-output calculation.

\begin{theorem}[Pinching in the first singular interval]
\label{v7:cond:first-interval}\label{v6:cond:high-dimensional}
Let $L\ge2$, $0\le b<L$, $m=L+b$, $d=2L$, and use the mask $\{0,1\}$
repeated $m$ times.  For $\zeta$ of order $2L$, let $T$ multiply the
coefficient of $X^{m-j}Z^j$ by $\zeta^j$.  The normalization map is
\begin{equation}\label{v6:cond:high-map}
 \nu:\PP^m\longrightarrow Y,\qquad[f]\longmapsto[fTf].
\end{equation}
Its conductor in the normalization is the reduced ideal of
\[
 C=\{[h(aX^L+cZ^L)]:[h]\in\PP^b,\ [a:c]\in\PP^1\}
       \simeq\PP^b\times\PP^1.
\]
Its image $D$ is also $\PP^b\times\PP^1$, and
$\rho=\nu|_C$ is $\mathrm{id}\times([a:c]\mapsto[a^2:-c^2])$.
Writing $i:D\hookrightarrow Y$,
\begin{equation}\label{v6:cond:high-quotient}
 \mathcal Q\simeq i_*\mathcal O_D(0,-1),\qquad
 \mathcal O_Y(1)|_D=\mathcal O_D(2,1),\qquad
 P_Y(n)=\binom{2n+m}{m}-n\binom{2n+b}{b}.
\end{equation}
The variety $Y$ is seminormal, and its local depth at every conductor
closed point is $b+2$.  It is everywhere locally Cohen--Macaulay exactly
when $L=2$.  For odd $L\ge3$, the mask $\{0,2\}$ gives a smooth image
$\PP^m$ with the same normalized relation cover and mixed degrees
\begin{equation}\label{v6:cond:high-degrees}
 \delta_j=(2L)^{m-j}2^j\qquad(0\le j\le m).
\end{equation}
\end{theorem}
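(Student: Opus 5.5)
The plan is to run the descent machinery of Section~\ref{v7:sec:descent} in the order point fibers, seminormal model, actual ring, and then read off the conductor, quotient, Hilbert polynomial and depth.

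\emph{Step 1: the map and the point fibers.} First I will check \eqref{v6:cond:high-map}. The factor of $\prod_i(v_iX-u_iZ)(v_iX-\zeta u_iZ)$ belonging to the shifted roots is $f$ with its $Z$-coefficients rescaled, and this is $Tf$ up to a unit. For $S=\{0,1\}$ one has $\lambda_j=1+\zeta^j$, which vanishes exactly when $j\equiv L\pmod{2L}$. On a nonzero orbit, $\ker_{\mathbb Z}C_S$ is spanned by the alternating vector $(1,-1,1,\ldots,-1)$, whose positive part has mass $L$. Hence, as in Theorem~\ref{v7:descent:seminormal-classification}, $\mathcal K_m(S)=\{0,\pm v\}$ for $L\le m<2L$. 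Two inputs collide precisely when one contains $X^L-\alpha Z^L$ and the other contains $X^L+\alpha Z^L$ with the same cofactor $h$ of degree $b$. The endpoints $0,\infty$ give the degenerate members $a=0$ or $c=0$, by Corollary~\ref{v6:spec:conductor-locus}(i), since $\tau(S)=L$. This identifies the nonnormal locus with $C=\{h(aX^L+cZ^L)\}\simeq\PP^b\times\PP^1$. Because $T(aX^L+cZ^L)=aX^L-cZ^L$, we get $\rho(h,[a:c])=(h^2\text{-class},[a^2:-c^2])$, which after identifying $D$ is $\mathrm{id}\times(\text{squaring})$. The same point-fiber analysis shows $\nu$ is injective off $C$. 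Off $C$ there is no rank loss either: by Proposition~\ref{v6:spec:differential-rank} a loss would need dependent active columns, and those columns span an alternating orbit.

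\emph{Step 2: the actual ring equals the pinching.} The key claim is
\[
\mathcal O_Y=\nu_*\mathcal O_{\PP^m}\times_{\rho_*\mathcal O_C}\mathcal O_D .
\]
The inclusion of $\mathcal O_Y$ into the right-hand side is automatic. The right-hand side is seminormal, since it is a pinching along a reduced finite map with reduced $D$, so the reverse inclusion is exactly the statement that there is no infinitesimal gluing. I expect this to be the main obstacle. I would prove it locally with Theorem~\ref{v6:spec:completed-model}. At a point of $D$ with $a c\ne0$ and $h$ coprime to $X^L\pm\alpha Z^L$, the fiber has two points. Both branches are immersive by the left-inverse argument on the complementary clusters together with the $j<L$ Fourier coefficients, so Proposition~\ref{v6:spec:immersive-conductor} applies. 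The task is then to show that $I_1+I_2$ cuts out exactly the reduced $C$ on each branch. The cluster moments $q_{a,j}$ with $j\ne L$ see all deviations, while order $L$ only sees the antisymmetric combination, so the two branches agree to first order exactly along $C$. This reduces the claim to checking that the two branch ideals meet transversally modulo the reduced ideal of $C$. For the special points (the total-zero point, $h$ sharing roots with the binomial, and $a=0$ or $c=0$), I would instead use the monic chart: the algebra $A=k[P_j:j\not\equiv L \bmod 2L]$ contains the generators of the ideal of $C$ times $B$, by a Newton-induction argument in the style of the finite-output step of Theorem~\ref{v7:cond:semigroup}, with the weight-$L$ class $e_L$ singled out. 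I would then conclude equality by comparing the boundary images. If this chartwise recovery becomes awkward, the fallback is to verify that the quotient of the right-hand side by $\mathcal O_Y$ has no associated points by a depth or $S_2$ argument on the fiber product.

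\emph{Step 3: consequences.} From the pinching, $\mathcal Q=\rho_*\mathcal O_C/\mathcal O_D$. Since $\rho$ is a double cover that squares the $\PP^1$ factor, this gives $\rho_*\mathcal O_C=\mathcal O_D\oplus\mathcal O_D(0,-1)$ and hence $\mathcal Q\simeq i_*\mathcal O_D(0,-1)$. The conductor is the annihilator of this quotient, which is the reduced ideal of $C$. The embedding $C\to\PP^m$ is bilinear, so $\mathcal O(2)|_C=\mathcal O_C(2,2)=\rho^*\mathcal O_D(2,1)$, which gives $\mathcal O_Y(1)|_D=\mathcal O_D(2,1)$. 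Twisting the exact sequence $0\to\mathcal O_Y\to\nu_*\mathcal O_{\PP^m}\to\mathcal Q\to0$ by $\mathcal O_Y(n)$ gives
\[
P_Y(n)=\binom{2n+m}{m}-\binom{2n+b}{b}\,\chi(\mathcal O_{\PP^1}(n-1)),
\]
and $\chi(\mathcal O_{\PP^1}(n-1))=n$, which is \eqref{v6:cond:high-quotient}.

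Depth follows from the depth lemma applied to the same sequence. The module $\nu_*\mathcal O_{\PP^m}$ has depth $m$ (argued as in Theorem~\ref{v7:cond:semigroup}), and $\mathcal Q$ is a line bundle on the smooth $(b+1)$-fold $D$, so it has depth $b+1$. The lemma gives depth $\min(m,b+2)=b+2$, because $m=L+b\ge b+2$. Since $\dim Y=m$, this depth equals $\dim Y$ exactly when $L=2$, which is the Cohen--Macaulay statement.

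Finally, for the mask $\{0,2\}$ with $L$ odd, $\zeta^2$ has odd order $L$, so $1+\zeta^{2j}\ne0$ for every $j$. The mask is aperiodic because $L\ge3$. Corollary~\ref{v6:norm:homogeneous-threshold} then gives $Y\simeq\PP^m$ and, with $w=2$, the mixed degrees \eqref{v6:cond:high-degrees}. The normalized relation covers coincide, since both come from $\sym^m\PP^1$ with the same $d$th-power map.
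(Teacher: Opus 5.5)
Your Steps~1 and~3 and the $\{0,2\}$ comparison are sound, and Step~3 essentially matches the paper's closing step. Once $\mathcal O_Y=\nu_*\mathcal O_{\PP^m}\times_{\rho_*\mathcal O_C}\mathcal O_D$ is known with $C$ reduced, everything else follows: Ferrand's sequence, the splitting $\rho_*\mathcal O_C=\mathcal O_D\oplus\mathcal O_D(0,-1)$, the Hilbert polynomial and the depth lemma. The gap is Step~2, which carries the actual content of the theorem, and your arguments do not establish it.

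\textbf{Generic points.} The two-branch analysis at generic points of $D$ works, but it is more cleanly done with Proposition~\ref{v7:linear:exact}. There the output is squarefree, so only first moments occur, and the two branches are $L$-planes meeting in a line. This only shows that the conductor is reduced \emph{generically} along $C$. The real danger is an embedded thickening at special points. This is exactly what happens in Theorem~\ref{v6:cond:thick}, where $(p^3,pt^2)B$ differs from $pB$ only at two endpoints.

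\textbf{Special points.} Here you invoke a Newton induction in the style of Theorem~\ref{v7:cond:semigroup}. Its hypothesis \eqref{v7:cond:spectral-hypothesis} fails as soon as $b>0$, because $\lambda_L=0$ while $m\nmid L$. Moreover, for $b>0$ the ideal of $C$ is not an ideal of elementary symmetric functions. On the chart $a_0=1$ it is $(a_j:b<j<L)+(a_{L+i}-a_La_i:1\le i\le b)$. So recovering the $e_j$ with $e_L$ set aside does not yield $\mathcal I_CB\subset A$.

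\textbf{The fallback.} The depth/$S_2$ fallback cannot work. For $L\ge3$ the ring $\mathcal O_Y$ has depth $b+2<m$ at points of $D$, so it is not $S_2$. No depth property of the pinching $\mathcal S$ constrains $\mathcal S/\mathcal O_Y$; again, Theorem~\ref{v6:cond:thick} has the same fibers and seminormalization with a strictly smaller ring.

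\textbf{Uncovered points.} For $b\ge1$ the locus $C$ contains points such as $[XZ^{m-1}]$, which lie in neither affine chart. Your plan does not treat them.

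\textbf{The missing idea.} You need coordinates in which $C$ is a coordinate subspace and the output coefficients are triangular. On $a_0=1$ with $y=Z/X$, the paper writes $f=h+E+zy^Lh$, where $z=a_L$, $E=\sum_{j\in\mathcal J}e_jy^j$ and $e_{L+i}=a_{L+i}-za_i$. Then $C=\{\mathbf e=0\}$, and the paper uses the exact identity \eqref{v7:cond:interval-expansion}. The recovery proceeds in order:
\begin{enumerate}
\item The coefficients $g_j$ with $j\le m$ recover $\mathbf h$ and $\mathbf e$. The factor $1+\zeta^j$ is nonzero for $j\ne L$, and there is no unknown $u_L$.
\item The coefficients $g_{L+j}$ recover $ze_j$ for $j<L$, using the factor $\zeta^j-1$.
\item At $j=L$ they recover $z^2$, with coefficient $-1$.
\item They then recover $ze_j$ for $j>L$.
\end{enumerate}
This gives $A=k[\mathbf h,z^2]+(\mathbf e)B$ on the entire chart at once. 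It includes the total-zero point and collisions of $h$ with the binomial, so no separate generic-point analysis is needed.

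The points outside both charts reduce to this computation by Hensel grouping of output roots into whole $\mu_{2L}$-orbits. Since $m<2L$, at most one group has size $L+b'\ge L$. Hence every completed local ring is the completion of the chart algebra $A_{L,b'}$ with $b-b'$ regular parameters adjoined. The reduced conductor then follows from flat base change of annihilators and faithful flatness of completion.
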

\begin{proof}
\emph{The actual coefficient algebra.}
On the chart $a_0=1$, put $y=Z/X$, $g_j=[y^j](fTf)$, and
$A=k[g_1,\ldots,g_{2m}]$.  Write
\[
 f=h+E+zy^Lh,\quad h=1+\sum_{i=1}^b h_i y^i,\quad z=a_L,
 \quad E=\sum_{j\in\mathcal J}e_jy^j,
 \quad\mathcal J=\{b+1,\ldots,m\}\setminus\{L\}.
\]
These are polynomial coordinates: $h_i=a_i$, $e_j=a_j$ for $b<j<L$,
and $e_{L+i}=a_{L+i}-za_i$ for $1\le i\le b$.
There are $L-1$ variables $e_j$.  Since $\zeta^L=-1$,
\begin{equation}\label{v7:cond:interval-expansion}
 fTf=(h+E)T(h+E)+zy^L(hTE-ThE)-z^2y^{2L}hTh.
\end{equation}
Put $U=h+E=\sum u_jy^j$, so $u_0=1$ and $u_L=0$.
The terms involving $z$ begin in degree at least $m+1$.  Therefore,
for $1\le j\le m$,
\[
 g_j=(1+\zeta^j)u_j+
       \sum_{i=1}^{j-1}\zeta^{j-i}u_i u_{j-i}.
\]
For $j\ne L$ the coefficient $1+\zeta^j$ is nonzero; at $j=L$
there is no unknown $u_L$.  Induction recovers every $h_i,e_j$ in $A$.
For the second induction, set $h_0=1$ and
$r_j=g_{L+j}-[y^{L+j}](UTU)$.  The exact identity is
\[
 r_j=\sum_{\substack{0\le i\le b\\j-i\in\mathcal J}}
       (\zeta^{j-i}-\zeta^i)h_i(ze_{j-i})
       -z^2[y^{j-L}](hTh),
\]
where a coefficient of negative index is zero.  For $j\in\mathcal J$
the $i=0$ term is $(\zeta^j-1)ze_j$, with nonzero coefficient,
and every other $ze$-index is smaller.  Recover these terms first for
$j<L$.  At $j=L$ there is no $e_L$, and the remaining coefficient
of $z^2$ is $-1$, so $z^2\in A$.  The same identity then recovers
all $ze_j$ for $j>L$.  All indices used are at most $L+m\le2m$;
the empty first range when $b=L-1$ causes no exception.
Conversely, \eqref{v7:cond:interval-expansion} expresses every output
coefficient in these generators, proving
\begin{equation}\label{v6:cond:high-ring}
 B=k[\mathbf h,\mathbf e,z],\quad J=(\mathbf e)B,\qquad
 A=k[\mathbf h,\mathbf e,z\mathbf e,z^2]=k[\mathbf h,z^2]+J,
 \qquad(A:B)=J.
\end{equation}
More explicitly, put $\mathcal R=k[\mathbf h,\mathbf e,z^2]$.
The recovered generators give
\[
 B=\mathcal R\oplus z\mathcal R,\qquad
 A=\mathcal R\oplus z(\mathbf e)\mathcal R,\qquad
 B/A=z\bigl(\mathcal R/(\mathbf e)\mathcal R\bigr).
\]
Thus $J\subset A$ and annihilates $B/A$.  Modulo $J$, multiplication by
$z$ takes every nonzero even polynomial out of $A/J$, proving
$(A:B)=J$.  As an $A$-ideal, $J$ is generated by $e_j,ze_j$.
The square-and-cube criterion proves seminormality: an element of
$\operatorname{Frac}(A)$ whose square and cube are in $A$ first belongs
to the normal ring $B$; its reduction modulo $J$ is fixed by $z\mapsto-z$
because its square and cube are fixed, and hence is even.  The same
criterion applies to the complete models, including both branches.

\emph{Boundary embedding and point fibers.}
Because $b<L$, the coefficient lists of $hX^L$ and $hZ^L$ are disjoint.
Their products with $a,c$ give a Segre closed immersion
$\PP^b\times\PP^1\hookrightarrow\PP^m$, identifying $C$ with this
smooth product, including $a=0$ and $c=0$.  On it the output is
\[
 hTh(a^2X^{2L}-c^2Z^{2L}).
\]
The map $[h]\mapsto[hTh]$ is a closed immersion with image $\PP^b$:
for $b>0$ use Corollary~\ref{v6:norm:homogeneous-threshold}, since
$b<L$, and for $b=0$ it is a point.  The two shifted output lists are
again disjoint, since $2b<2L$.  This identifies the reduced image $D$
with the stated product, its actual polarization with $\mathcal O_D(2,1)$,
and $\rho$ with the displayed double cover.

For a nonzero $\mu_{2L}$-orbit the integer kernel of $C_S=I+P$ is
$\mathbb Z\varepsilon$, where $\varepsilon_c=(-1)^c$.
Two feasible configurations differ by $a\varepsilon$, $a\in\mathbb Z$.
A nonzero difference needs at least $L$ parameters, and $|a|\ge2$
needs at least $2L$.  Both changes $+\varepsilon$ and $-\varepsilon$
cannot be feasible at one configuration of size less than $2L$.
Thus every such orbit fiber has at most two points; it has two precisely
when one full parity class is present.  The replacement exchanges the
two degree-$L$ binomial factors and leaves the residual divisor fixed.
Every proper subset of the columns of $I+P$ is independent, since its
one-dimensional kernel has full support.  Hence all nonzero-orbit
branches here are immersive, even with repeated input roots.
At zero and infinity the first missing Fourier index is $L$.
Corollary~\ref{v6:spec:conductor-locus} identifies the nonnormal support
with $D$ and its inverse image with $C$; the ideal is determined next.

\emph{The conductor ideal from the whole fiber.}
Group the output roots by \emph{whole} nonzero $\mu_{2L}$-orbits,
with zero and infinity as separate groups $\Omega$.
The output multiplicity $2k_\Omega$ recovers each input size, and
$\sum k_\Omega=m$.  The full fiber is the product of the group fibers.
Hensel factorization separates groups in source and target while retaining
every cluster and branch within each group.  The exact moment tuples of
Theorem~\ref{v6:spec:completed-model} use only their group variables, so
the whole completed image is the completed tensor product of the group
image algebras.  In disjoint coefficient variables its kernel is the
sum of the extended group kernels, each computed over the entire group fiber.

Groups with $k_\Omega<L$ give isomorphisms by
Corollary~\ref{v6:norm:homogeneous-threshold}.  Since $m<2L$, at most one
group has size $k=L+b'\ge L$.  With no such group all factors are smooth;
otherwise,
writing $A_{L,b'}$ for the actual algebra
\eqref{v6:cond:high-ring} and $\mathfrak m_*$ for this group's
output point, the full model is
\[
 \widehat{\mathcal O}_{Y,y}\simeq
 \widehat{(A_{L,b'})_{\mathfrak m_*}}
 [[v_1,\ldots,v_{b-b'}]].
\]
At infinity use the reciprocal algebra.  Completion at the output point
retains every normalization preimage; the other groups supply
$m-k=b-b'$ regular parameters.  The source conductor is the reduced ideal
$(\mathbf e)$ in this model.  Its support is the already identified
smooth $C$, so it is the completed ideal of $C$.
The construction includes simultaneous zero and infinity inputs and
residual roots colliding inside the large orbit: all coefficient
variables are retained.
For a finite module $Q$, its annihilator is the kernel of the map
$R\to Q^{\oplus n}$ defined by a finite generating list.  Flat extension
preserves this kernel.  Applied to the normalization quotient, this
justifies completion and adjoining the regular parameters for the
actual conductor ideal.  Faithful flatness gives $\mathcal I_C$;
the same local models prove seminormality and smoothness off $D$.

\emph{Defect, polarization, and depth.}
With the reduced conductor sides $C,D$ now identified,
\eqref{v7:descent:classical} gives the exact sequence
\cite[Lemma 1.3]{Ferrand}
\[
 0\longrightarrow\mathcal O_Y\longrightarrow\nu_*\mathcal O_{\PP^m}
 \longrightarrow i_*(\rho_*\mathcal O_C/\mathcal O_D)
 \longrightarrow0.
\]
For the double cover of the second factor,
$\rho_*\mathcal O_C=\mathcal O_D\oplus\mathcal O_D(0,-1)$.
Its odd generators $z,z^{-1}$ have transition $z^2$.
The restriction $\mathcal O_Y(1)|_D=\mathcal O_D(2,1)$ gives
$\mathcal Q(n)=i_*\mathcal O_D(2n,n-1)$, of Euler characteristic
$n\binom{2n+b}{b}$.  Since $\nu^*\mathcal O_Y(1)=\mathcal O_{\PP^m}(2)$,
the exact sequence proves \eqref{v6:cond:high-quotient}.

At a conductor closed point, the finite normalization module has depth
$m$: its completion is a product of regular local rings of dimension
$m$.  The quotient is invertible on the smooth $(b+1)$-fold $D$, so
has depth $b+1$.  The depth lemma \cite[Tag 00LX]{Stacks} gives depth
$b+2$ when $L>2$, and full depth $m=b+2$ when $L=2$.
For odd $L$, $\zeta^2$ has odd order, so $1+\zeta^{2j}$ never vanishes.
The comparison mask has smooth projective image by
Corollary~\ref{v6:norm:homogeneous-threshold}.  Both masks have trivial
rotation stabilizer and the same normalized relation morphism, taking
$2L$th powers of the input parameters.  Their two pullbacks are
$2H$ and $2LH$, giving \eqref{v6:cond:high-degrees}.
\end{proof}

\begin{remark}[Completed local forms in the first singular interval]
\label{v6:cond:high-local}
For $b=0$, the theorem has conductor line and quotient
$\mathcal O_D(-1)$.  Away from the two branch values the completed ring is
\[
 k[[s,x_1,\ldots,x_{L-1},y_1,\ldots,y_{L-1}]]/(x_iy_j:1\le i,j<L).
\]
At a branch value it is
\[
 \frac{k[[t,x_1,\ldots,x_{L-1},y_1,\ldots,y_{L-1}]]}
 {(x_iy_j-x_jy_i,\ y_iy_j-tx_ix_j:1\le i,j<L)}.
\]
Here $t=z^2$, $y_i=x_iz$.  The first ring is the fiber product of two
smooth branches over their common line.  For the second, put
$R=k[[t,\mathbf x]]$.  Products of two $y$'s reduce to $R$, and the
relations $x_iy_j-x_jy_i$ are precisely the Koszul syzygies of
$(x_1,\ldots,x_{L-1})$.  Thus its image is exactly
$R\oplus z(\mathbf x)R\subset R\oplus zR$, with $z^2=t$;
so the displayed relations generate the kernel.  For $L=2$ the second is the pinch-point
equation $y^2=tx^2$.

For every $0\le b<L$, the same two rings with $b$ independent formal
variables adjoined describe all conductor closed points.  The first
case is $ac\ne0$ and the second is $ac=0$ in the binomial parameter
of $C$.  Indeed, the unique large group in the proof has size $L+b'$,
contributing $b'$ residual coefficient variables, and the other groups
contribute $m-(L+b')=b-b'$ regular variables.  The free $\mathbf h$
coordinates in \eqref{v6:cond:high-ring} remain independent when residual
roots collide with binomial roots, so these are all completed local
types throughout $L\le m<2L$.
\end{remark}

\section{One-jet descent for singleton--complement profiles}
\label{v7:sec:mixed-trace}

The singleton--complement image requires a one-jet condition beyond
pointwise compatibility.  We prove finite-output generation before
projective descent, separately for $\ell<d$, $\ell=d$ and $\ell=d+1$.
Throughout, $k$ is algebraically closed of characteristic zero,
$d\ge3$, and $1\le\ell\le d$ unless another range is stated.

The section has three intrinsic regimes.  For $1\le\ell<d$, the
actual image is described by a finite free one-jet descent algebra; at
$\ell=d$ the same first-order descent condition persists, but the natural
finite projection changes and $t$ itself no longer descends.  The boundary
normality threshold in Proposition~\ref{v8:mixed:boundary-threshold} shows
that $\ell=d$ is the last value for which the reduced boundary is normal.
We therefore use the difference-kernel calculation only as a lifting and
specialization device, not as a substitute for finite-output generation.
At $\ell=d+1$, Theorems~\ref{v8r9:mixed:first-excluded}
and~\ref{v8r11:mixed:projective-first} give a separate finite-output
certificate and projective conductor calculation across the first
nonnormal-boundary layer.  This is the endpoint of the finite-output and
projective-descent range proved here.

\subsection{The family and its finite output algebra}
\label{r22:mixed:finite-output}

Use the masks $\{0\}$ and $C_d\setminus\{0\}$ with multiplicities
$1,\ell$.  The product-normalization theorem
\cite[Theorem~5.4]{LiProfileGeometry} gives the normalization of the
reduced image $Y=Y_{d,\ell}$:
\[
 \nu:N=\PP^1\times\PP^\ell\longrightarrow Y,
 \qquad\nu^*\mathcal O_Y(1)=\mathcal O_N(1,d-1).
\]
The profile lies in the Krylov range for
$I=d\{0,\ldots,\ell+1\}$ and $r=1+\ell(d-1)$.
On the finite-root chart write $f(x)=\prod_{i=1}^\ell(x-v_i)$.  The output
polynomial and its Newton sums are
\begin{align}
 G(x)&=(x-u)\prod_{i=1}^\ell\prod_{a=1}^{d-1}(x-\zeta^a v_i),
 \label{v7:mixed:output}\\
 Q_j&=u^j+a_jP_j(f),\qquad
 a_j=\begin{cases}-1,&d\nmid j,\\d-1,&d\mid j.\end{cases}
 \label{v7:mixed:newton}
\end{align}
Its actual monic image algebra is $A=k[Q_1,\ldots,Q_r]$;
all higher $Q_j$ follow from the output Newton recurrence.

Divide once by $x-u$:
\begin{equation}\label{v7:mixed:division}
 f(x)=(x-u)h(x)+t,\qquad t=f(u),\qquad
 h(x)=x^{\ell-1}+h_1x^{\ell-2}+\cdots+h_{\ell-1}.
\end{equation}
This is a polynomial coordinate change on the normalization, giving
$B=k[\mathbf h,t,u]$.  For $\ell=1$ put $h=1$.
In the range $\ell\le d$, for $j<\ell$ we have $Q_j=-P_j(h)$, so $\mathbf h\in A$.
If $\ell<d$, the additional identity
$Q_\ell=-P_\ell(h)+\ell t$ recovers $t\in A$.
For this subcritical range put
\begin{equation}\label{v7:mixed:noether-ring}
 c=d-\ell,\qquad
 w=\frac{Q_d-(d-1)P_d(h)}d,\qquad R=k[\mathbf h,t,w].
\end{equation}
The polynomial $w$ is monic of degree $d$ in $u$, with all other
$u$-degrees smaller.  Thus $R$ is a polynomial ring and
$B$ is free over $R$, with basis $1,u,\ldots,u^{d-1}$.

Let $H_k(u,h)$ be the complete homogeneous symmetric function of degree
$k$ in $u$ and the roots of $h$.  Equivalently, it is the quotient in the
division of $x^{k+\ell-1}$ by $h(x)$, evaluated at $x=u$.  Therefore
\begin{equation}\label{v7:mixed:division-remainder}
 h(u)H_k(u,h)=u^{k+\ell-1}+r_k(u),\qquad\deg_u r_k<\ell-1.
\end{equation}
For $\ell=1$, $r_k=0$.  Set $H_k=0$ for $k<0$.
For $k\ge0$ with $d\nmid\ell+k$, put
\begin{equation}\label{v7:mixed:bk}
 b_k=\frac{Q_{\ell+k}+P_{\ell+k}(h)}{\ell+k}.
\end{equation}

\subsection{The actual algebra below the critical multiplicity}
\label{r22:mixed:subcritical}

Over $R_0=k[\mathbf h,s]$ consider
$E=R_0[u]/(u^d-s)$.  Define
\begin{equation}\label{v7:mixed:trace}
 \tau_h(p)=[u^{d-1}]\bigl(h(u)p(u)\bmod(u^d-s)\bigr).
\end{equation}
The ring $R_0$ describes restriction to $t=0$; it is not an assumed
subring of $A$.  Put
$\varepsilon_s(p)=[u^{d-1}](p\bmod(u^d-s))$, so
$\tau_h(p)=\varepsilon_s(hp)$.  In the power basis, the matrix of
$\varepsilon_s(pq)$ is anti-diagonal with entries one, hence is perfect
even at $s=0$.  This does not assert perfection of the weighted pairing
$\tau_h(pq)$.  Both functionals are $R_0$-linear; neither is being
identified with ordinary algebra trace.  For $k\ge0$, formula \eqref{v7:mixed:division-remainder} gives
\begin{equation}\label{v7:mixed:trace-H}
 \tau_h(H_k)=
 \begin{cases}s^{q-1},&\ell+k=dq,\\0,&d\nmid\ell+k.\end{cases}
\end{equation}
Among the triangular basis $H_0,\ldots,H_{d-1}$, only $H_c$ has
nonzero $\tau_h$-value, equal to one, where $c=d-\ell$.  Thus $\tau_h(1)=0$
for $\ell<d$ and $\tau_h(1)=1$ for $\ell=d$.

\begin{theorem}[The mixed one-jet descent algebra]
\label{v7:mixed:affine}
Assume $1\le\ell<d$.  With the notation above, the actual monic
image algebra is the free $R$-module
\begin{equation}\label{v7:mixed:free-basis}
 A=R\ \oplus\!
 \bigoplus_{\substack{1\le k<d\\k\ne c}}Rb_k
 \ \oplus R(t^2u^c).
\end{equation}
For $F=F_0(\mathbf h,u)+tF_1(\mathbf h,u)\pmod{t^2B}$, membership in $A$
is equivalent to
\begin{equation}\label{v7:mixed:jet}
 F_0=H(\mathbf h,u^d)\text{ for some }H\in R_0,
 \qquad \tau_h(F_1)=-(d-1)\partial_s H.
\end{equation}
Moreover,
\begin{equation}\label{v7:mixed:affine-conductor}
 (A:B)=t^2B,\qquad
 A^{\mathrm{sn}}=k[\mathbf h,u^d]+tB,\qquad
 A^{\mathrm{sn}}/A\simeq R_0.
\end{equation}
The last module is generated by $[tu^c]$.  The algebra $A$ is
Cohen--Macaulay everywhere on this chart.
\end{theorem}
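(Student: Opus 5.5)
The plan is to prove the membership criterion \eqref{v7:mixed:jet} first, then deduce the free basis, and finally read off the conductor, the seminormalization and the Cohen--Macaulay property.

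\emph{Step 1: the candidate algebra.} Let $A'$ be the set of $F\in B$ satisfying \eqref{v7:mixed:jet}. First check that $A'$ is a $k$-subalgebra. The first condition is multiplicative. For the second, $\tau_h$ is $R_0$-linear, and the $t$-linear part of a product $FG$ is $F_0G_1+G_0F_1$ with $F_0,G_0\in R_0$. Hence the second condition is a derivation-type identity, and the Leibniz rule for $\partial_s$ preserves it. Next check that every output $Q_j$ lies in $A'$. Substitute $f(v)=(v-u)h(v)+t$ into $Q_j=u^j+a_jP_j(f)$ and expand to first order in $t$. Differentiating the roots of $f$ in $t$ gives the $t$-coefficient $F_1$ as a sum over the roots $v_i$ of $a_jjv_i^{j-1}/f'(v_i)$. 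At $t=0$ we have $u^j+a_jP_j(u,\text{roots of }h)$, which reduces to $d\,u^{dq}+(\text{power sums of }h)$ when $j=dq$, and to $P_j(h)$-type terms otherwise. Comparing with \eqref{v7:mixed:trace-H}, using the generating-function identity $\sum_k H_kx^k=1/((1-ux)\prod(1-v_ix))$, I expect $\tau_h(F_1)$ to reduce to $-(d-1)\partial_s$ of the $t=0$ part. This is where the coefficient $a_j=d-1$ versus $-1$ enters. Hence $A\subset A'$.

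\emph{Step 2: the basis of $A'$ and the reverse inclusion.} Use the filtration $t^2B\subset tB\subset B$. First, $t^2B\subset A'$. Since $t\in A$ and $B$ is free over $R$ on $u^0,\dots,u^{d-1}$, the basis is $H_0,\dots,H_{d-1}$ after a triangular change. Then $t^2B=\bigoplus_i Rt^2H_i$ lies in $A$, because $t^2H_k=t\cdot(tH_k)$ and $tH_k\equiv t\,b_k$ modulo $t^2$ up to elements already in $A$. So the main task is to show that $b_k\equiv tH_k+(\text{degree-}0\text{ part})$. Modulo $t$, the set $A'/t^2B$ is described by pairs $(H,F_1)$ with $F_1\in\tau_h^{-1}(-(d-1)\partial_sH)$. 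The kernel of $\tau_h$ is free on the $H_k$, $k\ne c$, while $H_c$ realizes the value $1$. Therefore $A'/t^2B\simeq R_0\oplus\bigoplus_{k\ne c}R_0\,tH_k$. The lift of $H\in R_0$ is a polynomial in $w$ (and $\mathbf h$) corrected by $tH_c$, which is exactly how $w$ is built from $Q_d$. The lift of $tH_k$ is $b_k$ by Step 1. Assembling these over $R=R_0$-with-$t$ gives the decomposition \eqref{v7:mixed:free-basis}, provided $t^2u^c$ completes $t^2B$ modulo $t\cdot(\text{span of }b_k)$. Since all generators come from the outputs $Q_{\ell+k}$ with $\ell+k\le d+\ell-1\le r$, we get $A'\subset A$.

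\emph{Step 3: consequences.} For the conductor, $t^2B\subset A$, and it is an ideal of $B$. Conversely, any $F$ with $FB\subset A$ must satisfy $F_0u\in R_0$ for all multipliers, forcing $F_0=0$; then $F_1u^i\in\ker\tau_h$ for all $i$ forces $F_1=0$ by perfection of $\varepsilon_s$. For seminormalization, $k[\mathbf h,u^d]+tB$ is seminormal by the square-and-cube argument of Theorem~\ref{v7:cond:semigroup}, since $k[\mathbf h,u]$ modulo $t$ has $u\mapsto\zeta u$ invariants. The extension from $A$ to it is subintegral because only a nilpotent jet is added. The quotient is $tB/(\text{the }t\text{-part of }A)\simeq R_0$ via $\tau_h$, generated by $tu^c$. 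Cohen--Macaulayness follows because $A$ is a free module over the polynomial ring $R$.

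The main obstacle is the computation in Step 1 of the first-order jet $\tau_h(F_1)$ for each $Q_j$, and matching it with $-(d-1)\partial_s$ through \eqref{v7:mixed:trace-H}.
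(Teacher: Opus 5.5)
There is a genuine gap in Step~2, at the assertion $t^2B\subset A$. You obtain each $t^2H_k$ as $t\cdot b_k$ modulo terms already in $A$. But $b_k$ exists only when $d\nmid\ell+k$. For $k=c=d-\ell$ the output $Q_{\ell+c}=Q_d$ has been used to build $w$, whose $t$-coefficient $-(d-1)H_c$ has $\tau_h$-value $-(d-1)\ne0$. So there is no $b_c$, the element $tH_c$ does not descend, and nothing in your argument produces $t^2H_c$ (equivalently $t^2u^c$).

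You flag this (``provided $t^2u^c$ completes $t^2B$\ldots''), but then conclude $A'\subset A$ because ``all generators come from the outputs $Q_{\ell+k}$.'' That is false for $t^2u^c$, the one basis element that is not a single output. Its membership also cannot be read off from \eqref{v7:mixed:jet}, since $A'\subset A$ is exactly what is being proved. Without $t^2B\subset A$, none of the reverse inclusion, the free basis, or $(A:B)=t^2B$ follows.

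The missing step is an explicit construction from outputs, and it splits into two cases.

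If $c\ge2$, triangular induction with $tb_j$ gives $t^2u^j\in A$ for $j<c$. Then $b_1b_{c-1}=t^2(H_1+tL_1)(H_{c-1}+tL_{c-1})$ equals $t^2u^c$ plus $t^2$ times a polynomial of $u$-degree $<c$ over $k[\mathbf h,t]$, which recovers $t^2u^c$.

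If $c=1$ (so $\ell=d-1\ge2$), no such product exists and one must go to second order. Put $\gamma=H_1(h)$ and $w_0=w+(d-1)t\gamma=u^d-(d-1)tu$. Use the exact identities $b_k=tH_k$ for $2\le k\le d-2$, $b_{d-1}=tH_{d-1}-\tfrac12t^2$, and $b_d=tH_d-t^2(u+\gamma)$, the last coming from $Q_{2d-1}$. Write $H_d-u^d=\alpha_0+\sum_{k=2}^{d-1}\alpha_kH_k$; there is no $H_1$-term by \eqref{v7:mixed:trace-H}. These give
\[
 b_d-\alpha_0t-\sum_{k=2}^{d-1}\alpha_kb_k-tw_0=(d-2)t^2u+t^2\bigl(\tfrac12\alpha_{d-1}-\gamma\bigr),
\]
which recovers $t^2u$ because $d\ge3$.

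With this in place the rest of your plan works, but two smaller points need attention. First, you should prove $R$-independence of the proposed basis rather than ``assembling''; the paper computes the determinant $\pm t^d$ in the $u$-basis. Second, in the conductor step, $\tau_h(F_1q)=0$ for all $q$ only gives $hF_1=0$, so you must also use that $E\simeq k[\mathbf h,u]$ is a domain to get $F_1=0$.

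Your Step~1 is fine and matches the paper: the first-order coefficient of $P_j(f)$ is $-jH_{j-\ell}(u,h)$, and \eqref{v7:mixed:trace-H} gives the jet condition for every $Q_j$, exactly as you predict.
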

\begin{proof}
\textit{The triangular module.}
Comparing the Laurent coefficients of
\[
 \log f=\log((x-u)h)+
 \sum_{a\ge1}\frac{(-1)^{a-1}}a\frac{t^a}{((x-u)h)^a}
\]
gives
\begin{equation}\label{v7:mixed:triangular-moments}
 P_j(f)=u^j+P_j(h)-jtH_{j-\ell}(u,h)+O(t^2),\qquad
 b_k=tH_k+t^2L_k,\quad\deg_u L_k\le k-\ell<k.
\end{equation}
These are polynomial identities; the Laurent expansion is a device for
extracting their coefficients.  In particular, $b_k/t$ is monic of
$u$-degree $k$.

Let $M$ denote the right side of \eqref{v7:mixed:free-basis}.
Triangular induction on $j$ gives $t^2u^j\in M$ for $0\le j<d$:
use $tb_j$ except at $j=c$, which is already a basis element.
Consequently $t^2B\subset M$.  The nonconstant displayed basis elements
all belong to $tB$, so their products lie in $t^2B$; hence $M$ is an
$R$-subalgebra.  In the $u$-basis its determinant is $\pm t^d$, proving
$R$-linear independence.

\textit{Actual output membership.}
All $b_k$ used in the displayed basis come from
the finite outputs, since
$r-(\ell+d-1)=(\ell-1)(d-2)\ge0$.
We prove $M\subset A$, including $c=1$.
If $c\ge2$, triangular induction first recovers $t^2u^j$ for $j<c$ in
$A$, and
\[
 b_1b_{c-1}=t^2u^c+t^2\cdot
          (\text{a polynomial of $u$-degree smaller than }c)
\]
recovers the missing generator.  This includes $c=2$.
If $c=1$, then $\ell=d-1\ge2$.  Write $\gamma=H_1(h)$ for the first
complete homogeneous function in the roots of $h$ alone.  The exact
identities from the same Laurent expansion are
\begin{align*}
 w&=u^d-(d-1)t(u+\gamma),&
 w_0&=w+(d-1)t\gamma=u^d-(d-1)tu,\\
 b_k&=tH_k\quad(2\le k\le d-2),&
 b_{d-1}&=tH_{d-1}-\tfrac12t^2,\\
 b_d&=tH_d-t^2(u+\gamma).&&
\end{align*}
The last moment is $Q_{2d-1}$, which occurs among
the finite output Newton generators since
$r-(2d-1)=(d-1)(d-3)\ge0$.
Division by the monic triangular basis yields
\[
 H_d-u^d=\alpha_0+\sum_{k=2}^{d-1}\alpha_kH_k,
       \qquad\alpha_k\in k[\mathbf h].
\]
There is no $H_1$ term: its coefficient is detected by
\eqref{v7:mixed:trace-H} at $s=0$, where $\tau_h(H_d)=0$ and
$u^d=0$.  Therefore
\[
 b_d-\alpha_0t-\sum_{k=2}^{d-1}\alpha_kb_k-tw_0
 =(d-2)t^2u+t^2(\alpha_{d-1}/2-\gamma).
\]
Since $d-2\ne0$, this recovers $t^2u$ and proves $M\subset A$ in all cases.

\textit{The one-jet description and reverse inclusion.}
Modulo $t$, the image of $M$ is $R_0$, with $s=u^d$, and
$w=s-(d-1)tH_c\pmod{t^2B}$.  The first-order parts of $t$ and the
$b_k$ with $k\ne c$ span $t\ker\tau_h$.  Expanding $H(\mathbf h,w)$
therefore gives exactly \eqref{v7:mixed:jet}, since $t^2B\subset M$.
For $d\nmid j$, the zero-order part of $Q_j$ is $-P_j(h)$ and its
first-order part is $jH_{j-\ell}$, with $\tau_h$-value zero.  For $j=dq$,
the zero-order part is $ds^q+(d-1)P_j(h)$ and the $\tau_h$-value of its
first-order part is $-j(d-1)s^{q-1}$.  In both cases \eqref{v7:mixed:jet} holds.
All $Q_j$ belong to $M$, proving $A=M$.

\textit{Conductor, seminormalization, and module action.}
The jet description contains $t^2B$.  Since
$k[\mathbf h,u^d]\subset k[\mathbf h,u]$ has zero conductor,
any conductor element lies in $tB$.  Its first coefficient $p$ must
satisfy $\tau_h(pq)=0$ for every $q\in E$.  The perfect unweighted
pairing gives $hp=0$ in the universal domain $E=k[\mathbf h,u]$;
hence $p=0$, proving $(A:B)=t^2B$.

Put $A'=k[\mathbf h,u^d]+tB$.  Both rings equal $B$ after inverting
$t^2$, and have reduced boundary quotient $R_0$, so the finite
extension $A\subset A'$ is subintegral.  The square-and-cube criterion,
reduced modulo $t$, proves $A'$ seminormal using
$k[\mathbf h,u]\cap k(\mathbf h,u^d)=k[\mathbf h,u^d]$.
The jet functional
$F\mapsto(d-1)\partial_sH+\tau_h(F_1)$ is an $A$-linear surjection
$A'\to R_0$: its relative Leibniz rule uses the action of $A$ on
$R_0$ by restriction to $t=0$.  Its kernel is $A$, and $tu^c$ maps
to $1$.  This proves the quotient assertion without assuming $R_0\subset A$.
These arguments use only the established jet description and
$\tau_h(H_c)=1$, so they also apply once that description is proved
at $\ell=d$.  The displayed free basis proves Cohen--Macaulayness.
\end{proof}

The two conditions in \eqref{v7:mixed:jet} describe cyclic descent and
its transverse first order, including residual collisions and $s=0$.

\subsection{The critical multiplicity and its parameter algebra}
\label{v8:mixed:critical-section}

At $\ell=d$ the value $\tau_h(1)=1$ prevents $t$ itself from descending.
It does not force a higher-order descent condition.  The appropriate
finite projection instead uses $t^2$ as a parameter.

\begin{theorem}[One-jet descent at the critical multiplicity]
\label{v8:mixed:critical}
Assume $\ell=d$.  In the coordinates \eqref{v7:mixed:division} put
\[
 w=\frac{Q_d-(d-1)P_d(h)}d=u^d-(d-1)t,\qquad
 \sigma=t^2,\qquad R^\dagger=k[\mathbf h,w,\sigma].
\]
Then $R^\dagger\subset A$ is a polynomial ring and the actual monic
image algebra is the following free module of rank $2d$:
\begin{equation}\label{v8:mixed:critical-basis}
 A=R^\dagger\ \oplus\!\bigoplus_{j=1}^{d-1}R^\dagger(tH_j)
   \ \oplus\!\bigoplus_{j=1}^{d-1}R^\dagger(\sigma u^j)
   \ \oplus R^\dagger t^3.
\end{equation}
Membership in $A$ is still exactly the one-jet condition
\eqref{v7:mixed:jet}, now with $\deg h=d-1$ and $\tau_h(1)=1$.
Moreover,
\[
 (A:B)=t^2B,\qquad A^{\mathrm{sn}}=k[\mathbf h,u^d]+tB,
 \qquad A^{\mathrm{sn}}/A\simeq R_0,
\]
where the last module is generated by $[t]$.  In particular $A$ is
Cohen--Macaulay, and $t\notin A$ although $t^2,t^3\in A$.
\end{theorem}
\begin{proof}
\textit{Finite output recovery.}
The moments of indices below $d$ recover $\mathbf h$.  For
$1\le j<d$, the Laurent expansion used in
\eqref{v7:mixed:triangular-moments} has no quadratic $t$-term at
index $d+j$, and gives the exact identity $b_j=tH_j\in A$.
Here and below higher Newton sums belong to $A$ by the output Newton
identities, even when an index exceeds $r$.

We first recover $\sigma$ from the output algebra.  Put
\[
 V_0=H_d-u^d,\qquad
 W_2=\frac{Q_{2d}-(d-1)P_{2d}(h)}d.
\]
Formula \eqref{v7:mixed:division-remainder} shows that
$\deg_u V_0\le d-1$, and \eqref{v7:mixed:trace-H} gives
$\tau_h(V_0)=0$.  Thus $V_0$ is a $k[\mathbf h]$-linear combination of
$H_1,\ldots,H_{d-1}$, with no $H_0$ term, and $tV_0\in A$.
The coefficient of $t^2$ in $P_{2d}(f)$ is $d$, so
\begin{equation}\label{v8:mixed:critical-square}
 W_2-w^2=-2(d-1)tV_0-(d-1)(d-2)t^2.
\end{equation}
Since $d\ge3$, this proves $\sigma=t^2\in A$.

For $1\le j<d$, set $V_j=H_{d+j}-u^dH_j$.
Multiplying by $h(u)$ and using
\eqref{v7:mixed:division-remainder} gives $\deg_u V_j\le d-1$;
its $\tau_h$-value is again zero, so $tV_j\in A$.
Define $H_j^{(2)}$ by
$\sum_{j\ge0}H_j^{(2)}T^j=(\sum_{j\ge0}H_jT^j)^2$.
Because $2d+j<3d$, the same logarithmic expansion is exact through
order two and yields
\begin{align}
 b_{d+j}&=tH_{d+j}-\tfrac12t^2H_j^{(2)},\notag\\
 b_{d+j}-wb_j-tV_j
   &=\sigma\bigl((d-1)H_j-\tfrac12H_j^{(2)}\bigr).
 \label{v8:mixed:critical-triangular}
\end{align}
The polynomial in parentheses has $u$-degree $j$ and leading
coefficient $(2d-j-3)/2\ne0$.  Starting with $\sigma$, triangular
induction recovers $\sigma u^j$ for all $1\le j<d$.
Next $b_1b_{d-1}=\sigma H_1H_{d-1}$ is monic of $u$-degree $d$
after the factor $\sigma$ is removed.  Subtracting the already
recovered terms gives $\sigma u^d\in A$.  The identity
\[
 \sigma u^d-w\sigma=(d-1)t^3
\]
then gives $t^3\in A$.

\textit{The parameter ring and free module.}
Start with independent symbols $W,\Sigma$ and put
$T=k[\mathbf h,W,\Sigma]$.  The algebra
\[
 T[t,u]/(t^2-\Sigma,\ u^d-W-(d-1)t)
\]
is free over $T$ with basis $u^j,tu^j$, $0\le j<d$, by successive
monic division.  Eliminating $W,\Sigma$ identifies it with
$k[\mathbf h,t,u]=B$.  Thus $T$ embeds in $B$ as $R^\dagger$, and
this is the asserted polynomial parameter ring and $B$-basis.
Let $M$ be the right side of
\eqref{v8:mixed:critical-basis}.  It contains $\sigma u^j$ and
$\sigma t$; the products $\sigma b_j=t^3H_j$, followed by triangular
induction, give $\sigma tu^j\in M$.  Hence $t^2B\subset M$.
All the displayed nonscalar generators lie in $tB$, so $M$ is an
$R^\dagger$-subalgebra.  In the basis of $B$ its determinant is
$\pm\sigma^d$, proving linear independence.  The preceding recoveries
prove $M\subset A$.

\textit{The jet condition and its consequences.}
Modulo $t^2B$ the scalar subring is obtained by substituting
$w=s-(d-1)t$ into $R_0$.  The other first-order terms span
$t\ker\tau_h$, since $H_1,\ldots,H_{d-1}$ form a basis of that kernel.
Consequently $M$ is precisely the subalgebra defined by
\eqref{v7:mixed:jet}.  For every $Q_j$ the zero-order and first-order
parts satisfy that condition, by \eqref{v7:mixed:trace-H}, exactly as
in the proof of Theorem~\ref{v7:mixed:affine}.  Thus $A\subset M$.

Now $A=M$, $t^2B\subset A$, and $\tau_h(H_0)=1$; the universal
ring $E=k[\mathbf h,u]$ is still a domain.  The final conductor and
seminormalization argument of Theorem~\ref{v7:mixed:affine} therefore
applies, with $[t]$ generating the quotient and the same restriction
action.  The free basis proves Cohen--Macaulayness, whereas
$\tau_h(1)=1$ in the jet test proves $t\notin A$.
\end{proof}

The ranks $d$ and $2d$ belong to the two parameter projections;
both presentations impose the same one-jet order.

\subsection{The projective conductor square and its defect layers}
\label{r22:mixed:projective-route}

The following structural lemma supplies the mixed local model used in
both projective descent theorems.  It is independent of the affine
algebra presentations, which are inserted only after they have been proved.

\begin{lemma}[Completed mixed image over the entire fiber]
\label{r35:mixed:completed-model}
Let $k$ be algebraically closed of characteristic zero and $d\ge3$.
Fix $1\le\ell\le d+1$.  For $0\le m\le\ell$, let $A_{d,m}$ be
the actual monic output algebra for one singleton and $m$ complements;
put $A_{d,0}=k[u]$.
At a closed output point $y$ of $Y_{d,\ell}$, group its roots by entire
nonzero $\mu_d$-orbits, and treat zero and infinity as separate groups.
The output determines a unique group containing the singleton, and
fixes the number $m_\gamma$ of complement parameters in every group.
Write $m=m_*$ for the count in the singleton group.  Then
\begin{equation}\label{r35:mixed:completed-product}
 \widehat{\mathcal O}_{Y_{d,\ell},y}
 \simeq \widehat{(A_{d,m})_{\mathfrak m_*}}
       \widehat\otimes_k k[[z_1,\ldots,z_{\ell-m}]],
\end{equation}
where $\mathfrak m_*$ is the group output point; use the reciprocal
algebra when that group is at infinity.  The isomorphism identifies
the finite normalization algebras, including every point above $y$.
In each normalization factor a local equation of the incidence divisor
($f(u)$ on the monic chart) is a unit times the singleton-group equation,
taken to be $1$ when $m=0$.

For the companion profile with one full mask and
$\ell-1$ complements, the same completed-product
decomposition and identification of the finite
normalization algebras hold.  Here $m$ is the number
of complements in the group containing the full mask;
use that group's actual output algebra and
$\ell-1-m$ regular parameters.  These statements include
repeated roots within a group and simultaneous roots
at zero and infinity.
\end{lemma}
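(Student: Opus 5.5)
The plan is to follow the group-splitting argument already used in the proof of Theorem~\ref{v7:cond:first-interval}, now applied to the mixed map. First I show that the grouping data are determined by the output. Every complement parameter $v$ contributes the $d-1$ roots $\zeta^av$ ($a\ne0$) in its $\mu_d$-orbit, and the singleton contributes $u$. Take a nonzero orbit $\Omega$ and let $b_\Omega$ be its output multiplicity vector indexed by $C_d$. Then
\[
 b_\Omega=(C_{C_d\setminus\{0\}})n_\Omega+\epsilon_\Omega e_{c_u}.
\]
Here $n_\Omega$ is the complement allocation, $\epsilon_\Omega\in\{0,1\}$ records whether the singleton lies in $\Omega$, and $c_u$ is its position. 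Since $C_{C_d\setminus\{0\}}=J-I$, summing gives $|b_\Omega|=(d-1)|n_\Omega|+\epsilon_\Omega$. Because $d-1\ge2$, the residue of $|b_\Omega|$ modulo $d-1$ determines $\epsilon_\Omega$, and hence $m_\Omega=|n_\Omega|$. At zero and infinity the output multiplicities are $(d-1)m_0+\epsilon_0$ and $(d-1)m_\infty+\epsilon_\infty$, so the same division applies. Exactly one group has $\epsilon=1$. So the output fixes the singleton group and all counts $m_\gamma$, with $\sum_\gamma m_\gamma=\ell$. The identical argument works for the companion profile, using the full-mask weight $d$ against $d-1$; these are coprime, and $d\equiv1\pmod{d-1}$ plays the role of the singleton.

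Next I apply Hensel factorization as in Theorem~\ref{v6:spec:completed-model}. The output roots in different groups are pairwise coprime, so the completed ambient coefficient ring at $y$ is the completed tensor product of the group coefficient rings. The source side is handled the same way: the fixed counts $m_\gamma$ mean every point of $\nu^{-1}(y)$ factors as a product over groups of points of the group normalizations. These group normalizations are $\PP^1\times\sym^{m_*}\PP^1$ locally for the singleton group and $\sym^{m_\gamma}\PP^1$ otherwise. Consequently $\prod_{x\in\nu^{-1}(y)}\widehat{\mathcal O}_{N,x}$ is the completed tensor product of the group whole-fiber products. The exact moment tuples of each group use only that group's variables. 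By the whole-fiber closed-subalgebra description, the completed image is therefore the completed tensor product of the completed group images.

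For a group $\gamma$ without the singleton, the group map is the homogeneous complement map with $m_\gamma<d$ parameters. The complement has $\lambda_j=-1\ne0$ for $d\nmid j$, so $Z(C_d\setminus\{0\})=\{0\}$. Corollary~\ref{v6:norm:homogeneous-threshold} then shows the map is an isomorphism onto its image near the relevant points, with a single preimage. Its completed ring is therefore regular of dimension $m_\gamma$. This holds also at zero and infinity, by the endpoint moments with $\lambda_{\pm j}\ne0$ for $j\le m_\gamma<d$. One must check $m_\gamma<d$: it fails only when $m_\gamma\ge d$ and $\ell\le d+1$. The case $m_\gamma=d$ in a nonsingleton group forces $m_*\le1$. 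In that case I fall back on Corollary~\ref{v6:spec:conductor-locus}: at $j=d$ the endpoint coefficient is $d-1\ne0$, and on nonzero orbits the kernel of $J-I$ is trivial, so the fibers are singletons and the map is immersive. The remaining groups thus contribute exactly $\ell-m$ regular parameters $z_i$. The singleton group contributes the completion of $A_{d,m}$ at $\mathfrak m_*$, with the reciprocal chart used at infinity. This completion is taken over all of that group's normalization points, so it identifies the normalization algebras factorwise.

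Finally, the incidence divisor $f(u)=\prod_i(u-v_i)$ factors over groups. The factors $u-v_i$ with $v_i$ in another group are units, because $u$ and $v_i$ lie in different orbits. What remains is the singleton-group equation, which is $1$ when $m=0$. The step I expect to be the main obstacle is the second paragraph. It requires justifying that the completed image equals the completed tensor product even when the number of branches differs between groups and roots repeat. I would handle it exactly as in the whole-fiber part of Theorem~\ref{v6:spec:completed-model}: closedness of the image in the product, together with the fact that the topology is the product topology.
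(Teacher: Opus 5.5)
Your proposal is correct and follows essentially the paper's own proof. The steps match: residues modulo $d-1$ fix $\epsilon$ and each $m_\gamma$; Hensel factorization separates the groups; the complement-only groups are single regular branches; the singleton group keeps all of its feasible allocations; and incidence factors coming from other groups are units.

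One slip needs fixing. $Z(C_d\setminus\{0\})$ is empty, not $\{0\}$, because $\lambda_0=d-1\ne0$ and $\lambda_j=-1$ for $d\nmid j$. Corollary~\ref{v6:norm:homogeneous-threshold} therefore makes every complement-only group regular for all $m_\gamma$. The paper argues this directly: $J-I$ is invertible, and the endpoint multipliers $d-1$ and $-1$ are nonzero. Your case split is thus unnecessary, and as written it overlooks $m_\gamma=d+1$, which occurs when $\ell=d+1$ and $m_*=0$ (your fallback argument covers it verbatim).

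Once the other groups are known to be single regular branches, the step you flag as the main obstacle is immediate. The paper writes the kernel for a full configuration as $I_\alpha R_*[[\mathbf z]]$ and uses flatness of $R_*\to R_*[[\mathbf z]]$ together with finiteness of the configuration set. Your closed-subalgebra formulation amounts to the same thing.
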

\begin{proof}
For the singleton profile, the total multiplicity in a group is
$\epsilon+(d-1)m_\gamma$, with $\epsilon\in\{0,1\}$.
Reduction modulo $d-1$ recovers $\epsilon$, and then $m_\gamma$;
$d\ge3$ is used here.  Thus $\sum\epsilon=1$ and
$\sum m_\gamma=\ell$ impose no further choices once the output is fixed.
On a nonzero orbit, if the singleton occupies position $a$, the
possible complement multiplicities are precisely the nonnegative
solutions of
\[
 b_c=m-n_c+\mathbf1_{c=a},\qquad \sum_cn_c=m.
\]
All such solutions must be retained.  A group without the singleton has
$b_c=m_\gamma-n_c$, hence a unique allocation; its complement circulant
is $J-I$, whose eigenvalues are $d-1$ and $-1$.
At an endpoint the complement moment multipliers are likewise $d-1$
or $-1$.  Therefore exact power sums and Newton identities recover the
source coefficients in every complement-only group, giving a regular
completed image ring of dimension $m_\gamma$.

Hensel factorization separates the coprime output groups and the
corresponding input factors.  It leaves repeated roots inside their
groups.  On each feasible input configuration, the cluster coefficient
maps depend only on that group's variables.  As in the proof of
Theorem~\ref{v6:spec:completed-model}, finite normalization and flat
completion embed the completed image in
$\prod_{x\in\nu^{-1}(y)}\widehat{\mathcal O}_{N,x}$, and identify it
with the closed algebra generated by the output moment tuples.
Equivalently, its defining ideal is the intersection of the coefficient
kernels for \emph{all} feasible configurations.  Here those configurations
form the product of the group configurations, since their counts have
already been fixed.  Let $R_*$ be the completed ambient coefficient ring
of the singleton group, with kernels $I_\alpha$ for its feasible
configurations.  Quotienting by the complement-only group kernels leaves
$R_*[[\mathbf z]]$, and the kernel for a full configuration is
$I_\alpha R_*[[\mathbf z]]$.  Flatness of this power-series extension
and finiteness of the configuration set give
\[
 \bigcap_\alpha I_\alpha R_*[[\mathbf z]]
 =\left(\bigcap_\alpha I_\alpha\right)R_*[[\mathbf z]].
\]
Thus the full image ring is
$(R_*/\bigcap_\alpha I_\alpha)[[\mathbf z]]$, with $\ell-m$ regular
parameters.  This proves \eqref{r35:mixed:completed-product}
as an identity of embedded completed algebras.  Completing at one chosen
normalization point would replace the kernel intersection by one kernel
and does not prove this identity.  Evaluating all input factors outside
the singleton group at $u$ gives units, proving the incidence assertion.

For the full-mask profile the group total is
$d\epsilon+(d-1)m_\gamma$, which again fixes both counts.  On a nonzero
orbit its equation is $b_c=\epsilon+m_\gamma-n_c$; at each endpoint the
same count argument applies.  Hensel factorization and the preceding
coefficient-kernel argument give its stated completed product.
\end{proof}

\begin{theorem}[Projective mixed descent]\label{v7:mixed:projective}
For $1\le\ell\le d$, in $N=\PP^1\times\PP^\ell$ let
\[
 C=\{(u,f):f(u)=0\}\simeq\PP^1\times\PP^{\ell-1},
 \qquad (u,h)\longmapsto(u,(x-u)h).
\]
Its image $D\subset Y$ is also $\PP^1\times\PP^{\ell-1}$, and
$\rho=\nu|_C$ is the $d$th-power map on the first factor and the identity
on the second.  The variety $Y$ is smooth off $D$ and locally
Cohen--Macaulay everywhere.  Its normalization conductor is
\begin{equation}\label{v7:mixed:global-conductor}
 \mathfrak c\mathcal O_N=\mathcal I_C^2=
             \mathcal O_N(-2\ell,-2).
\end{equation}
The seminormalization pinches the reduced $C$ by $\rho$.
Writing $i:D\hookrightarrow Y$, its defect layers are
\begin{equation}\label{v7:mixed:global-layers}
 \mathcal Q_{\mathrm{glue}}\simeq i_*\mathcal O_D(-1,0)^{\oplus(d-1)},
 \qquad
 \mathcal Q_{\mathrm{inf}}\simeq i_*\mathcal O_D(-2,0).
\end{equation}
Their extension \eqref{v7:descent:extension} does not split as an
$\mathcal O_Y$-module extension.  Moreover,
\begin{align}
 \mathcal O_Y(1)|_D&=\mathcal O_D(1,d-1),\label{v7:mixed:boundary-polarization}\\
 P_Y(n)&=(n+1)\binom{(d-1)n+\ell}{\ell}
       -(dn-1)\binom{(d-1)n+\ell-1}{\ell-1}.
 \label{v7:mixed:projective-hilbert}
\end{align}
For $Y^{\mathrm{sn}}$ with the polarization pulled back from $Y$, replace
$dn-1$ in the last formula by $(d-1)n$.
\end{theorem}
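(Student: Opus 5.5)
Localize and globalize.  By Lemma~\ref{r35:mixed:completed-model}, at every closed point $y\in Y$ the completed image ring is the completed singleton-group algebra $A_{d,m}$ with $\ell-m$ regular parameters adjoined, and the identification covers the whole normalization fiber.  Outside $D$ the singleton group has $m=0$, or its orbit carries no complement root at the singleton position, i.e.\ $f(u)\ne0$.  In that case $t$ is a unit.  The subcritical and critical affine descriptions (Theorems~\ref{v7:mixed:affine} and~\ref{v8:mixed:critical}) then give $A_{d,m}=B$ locally, because $(A:B)=t^2B$.  Faithful flatness of completion therefore makes $Y$ smooth off $D$.  At points of $D$ the same two theorems supply free presentations over polynomial parameter rings.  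These give Cohen--Macaulayness of every completed local ring, and hence local Cohen--Macaulayness of $Y$.  The affine conductor $t^2B$, with $t=f(u)$ the local incidence equation up to a unit (the last assertion of the lemma), is compatible with the annihilator-commutes-with-flat-extension argument of Proposition~\ref{v6:spec:immersive-conductor}.  It globalizes to $\mathfrak c\mathcal O_N=\mathcal I_C^2$.  Since $C$ is the zero locus of the bihomogeneous form $f(u)$ of bidegree $(\ell,1)$, this ideal is $\mathcal O_N(-2\ell,-2)$.  The same affine theorems give $A^{\mathrm{sn}}=k[\mathbf h,u^d]+tB$ locally.  This is exactly the pinching of the reduced $C$ along $\rho$, glued by the Manaresi equalizer \eqref{v7:descent:classical}; the charts at $u=\infty$ are handled by the reciprocal algebra.

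Next I identify $D$ and $\rho$.  The parametrization $(u,h)\mapsto(u,(x-u)h)$ is a closed immersion $\PP^1\times\PP^{\ell-1}\to N$.  On $C$ the output form is $(x-u)h(x)^{\ast}\prod_{a}(x-\zeta^a u)$, where $h^\ast$ denotes the complement transform of $h$.  It factors as $(x^d-u^d)$ times the complement image of $h$.  The first factor depends only on $[u^d]$, and the second is a closed immersion of $\PP^{\ell-1}$ by the homogeneous threshold (Corollary~\ref{v6:norm:homogeneous-threshold}), since $\ell-1<d$.  Pulling back $\mathcal O_Y(1)$ along this product of a degree-one form in $u^d$ and a degree-$(d-1)$ form in $h$ yields \eqref{v7:mixed:boundary-polarization}.

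The defect layers then follow.  The gluing layer is $\rho_*\mathcal O_C/\mathcal O_D$ for the $d$th-power cover on the first factor, which is $\mathcal O_D(-1,0)^{\oplus(d-1)}$ with odd generators $u^j$.  The infinitesimal layer is generated locally by $[tu^c]$ (respectively $[t]$) with quotient $R_0$, an invertible $\mathcal O_D$-module.  I will compute its twist by tracking how the generator transforms between the two charts in $u$.  It is $t$ times a section of $\mathcal I_C/\mathcal I_C^2$ restricted along the jet functional $\tau_h$, and $\tau_h$ lowers $u$-degree by $d-1$.  Combining the normal bundle degree $(\ell,1)$ restricted to $C$ with $\tau_h$ should give $\mathcal O_D(-2,0)$.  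This degree bookkeeping, done consistently across the reciprocal chart and uniformly in $\ell<d$ versus $\ell=d$ (where the generator changes from $tu^c$ to $t$), is the step I expect to be the main obstacle.

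I will check it independently by Euler characteristics.  Twisting $0\to\mathcal O_Y\to\nu_*\mathcal O_N\to\mathcal Q\to0$ gives
$P_Y(n)=\chi(\mathcal O_N(n,(d-1)n))-\chi(\mathcal Q_{\mathrm{glue}}(n))-\chi(\mathcal Q_{\mathrm{inf}}(n))$.
The layers restricted to $D$ contribute $(d-1)n\cdot N_n+(n-1)N_n$ with $N_n=\binom{(d-1)n+\ell-1}{\ell-1}$, which totals $(dn-1)N_n$ and gives \eqref{v7:mixed:projective-hilbert}.  Omitting the infinitesimal layer gives the seminormal formula with $(d-1)n$.  For nonsplitting, I will argue locally.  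A splitting would give an $\mathcal O_Y$-linear section of $\mathcal Q\to\mathcal Q_{\mathrm{glue}}$.  But multiplying a glue generator such as $[u]$ by the element $t$ (or $b_k\in A$) lands on the nonzero infinitesimal class $[tu^c]$ modulo $A$ for a suitable index.  So $t$ annihilates $\mathcal Q_{\mathrm{glue}}$, yet any lift of $[u]$ into $\mathcal Q$ has nonzero $t$-multiple, contradicting a splitting.
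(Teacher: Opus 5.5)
Your overall plan follows the paper's: the completed model of Lemma~\ref{r35:mixed:completed-model} together with Theorems~\ref{v7:mixed:affine} and~\ref{v8:mixed:critical} gives smoothness off $D$, local Cohen--Macaulayness, $\mathfrak c\mathcal O_N=\mathcal I_C^2$ and the seminormalization, and the power map gives $\mathcal Q_{\mathrm{glue}}$. Your Euler-characteristic arithmetic is also right. However, the step you flag as the main obstacle is genuinely missing, and your ``independent check'' does not fill it. Your only input for $P_Y$ is $\chi(\mathcal Q_{\mathrm{inf}}(n))=(n-1)N_n$, which is exactly the twist you want to verify. The root-weight Hilbert series is not the projective Hilbert polynomial, so the check is circular.

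The idea you are missing is finite flat duality for $\rho$. Write $L_C=\mathcal I_C/\mathcal I_C^2$.
\begin{itemize}
\item Since $\nu_*\mathcal I_C\subset\mathcal S$ and $\nu_*\mathcal I_C^2\subset\mathcal O_Y$, there is a canonical $\mathcal O_D$-linear surjection $i_*\rho_*L_C\to\mathcal S/\mathcal O_Y$. By \eqref{v7:mixed:jet}, its kernel on the monic chart is $\ker\tau_h$.
\item With $L_C=\mathcal O_C(-\ell-1,-1)$ and $\omega_\rho=\mathcal O_C(2d-2,0)$, a map $\rho_*L_C\to\mathcal O_D(-2,0)$ is a section of $L_C^\vee\otimes\omega_\rho\otimes\rho^*\mathcal O_D(-2,0)=\mathcal O_C(\ell-1,1)$.
\item The evaluation section $h(u)$ realizes $\tau_h$. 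It is fiberwise surjective, because a nonzero $h$ of degree $\ell-1<d$ cannot vanish on a whole length-$d$ fiber of $\rho$.
\item Both surjections land in line bundles on the integral smooth $D$, so their kernels are saturated. The kernels agree generically, hence everywhere, and therefore $\mathcal S/\mathcal O_Y\simeq\mathcal O_D(-2,0)$.
\end{itemize}
Beware that your ``normal bundle degree $(\ell,1)$'' lives on $N$. Under $(u,h)\mapsto(u,(x-u)h)$, $\mathcal O_N(0,1)$ pulls back to $\mathcal O_C(1,1)$, so the conormal bundle in the coordinates of $C$ is $\mathcal O_C(-\ell-1,-1)$. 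Using $(\ell,1)$ there gives a non-integral first degree. If you keep your chart-transition route instead, you need three things:
\begin{itemize}
\item the first-order $t$-variation of the reciprocal division coordinates (compare the transition computation in the proof of Theorem~\ref{v8r11:mixed:projective-first});
\item both bidegrees of the line bundle, not just the one in $u$;
\item an extension across the codimension-two complement of the two charts.
\end{itemize}

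Your identification of $D$ also fails as stated. That $[u]\mapsto[X^d-u^dZ^d]$ factors through $u^d$, and that the complement map on $\PP^{\ell-1}$ is an embedding, does not make the multiplication $(s,h)\mapsto F_s\cdot(\text{complement output of }h)$ a closed immersion. The complement embedding needs no hypothesis at all, since its Fourier coefficients are $-1$ and $d-1$. So your argument would equally ``prove'' $D\simeq\PP^1\times\PP^{\ell-1}$ at $\ell=d+1$. That contradicts Proposition~\ref{v8:mixed:boundary-threshold}, where the map is bijective but not immersive along $h=F_s$.

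The hypothesis $\ell\le d$ enters through the interaction of the two factors. Injectivity comes from the group totals $d\epsilon+(d-1)k$, and immersivity from the disjointness of the weighted degree sets $\{d\}$ and $\{1,\ldots,\ell-1\}$; this is the companion normality criterion the paper invokes. Alternatively, you can do it directly:
\begin{itemize}
\item read off from the jet description, using $\tau_h(H_c)=1$, that the image of $A$ in $B/tB$ is $R_0=k[\mathbf h,u^d]$;
\item do the same on the reciprocal chart;
\item use the companion part of Lemma~\ref{r35:mixed:completed-model} at points with roots at both endpoints.
\end{itemize}
Your invertibility claim for $\mathcal S/\mathcal O_Y$ and your Euler characteristics both rely on this identification, so the gap is not cosmetic.

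Finally, your non-splitting argument works for $\ell<d$ once the glue class is $[u^c]$ rather than $[u]$: then $t[u^c]=[tu^c]$ and $\tau_h(u^c)=1$. At $\ell=d$, however, $t\notin A$ and $c=0$, and $[u^0]=0$ in the glue layer, so you need an explicit witness in $\mathcal I_D$. The paper uses $b_1=tH_1=t(u-h_1)$, whose product with $[u^{d-1}]$ has $\tau_h$-value $s-h_1h_{d-1}\ne0$.
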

\begin{proof}
\textit{The boundary and its polarization.}
The universal root divisor has class $(\ell,1)$ on $N$ and the displayed
product parameterization of $C$.  On it, the singleton together with
its coincident complement supplies a full mask, leaving $\ell-1$
complement parameters.  This latter profile has smooth image: for $\ell=1$ it is the binomial line;
for $\ell>1$, the projective normality criterion
\cite[Theorem~6.2]{LiProfileGeometry} applies because its weighted degree
sets are $\{d\}$ and $\{1,\ldots,\ell-1\}$, the required Fourier
coefficients are nonzero, and
$(\epsilon,k)\mapsto d\epsilon+(d-1)k$, $\epsilon\in\{0,1\}$,
is injective.  Hence this image is exactly the smooth $D$ stated above,
not merely a bijective image of that product.  This also gives
\eqref{v7:mixed:boundary-polarization} and $\rho$.

\textit{The completed image and conductor.}
In a nonzero singleton group, moving the singleton from $a$ to another
position is feasible exactly when $n_a>0$, hence exactly on $C$.
The residual divisor $h$ and $u^d$ remain fixed.  At the endpoints the
fiber is one geometric point, with the ramification of the power map.
Lemma~\ref{r35:mixed:completed-model} gives the completed local rings,
with all feasible preimages retained.  For its mixed factor use
Theorem~\ref{v7:mixed:affine} when $1\le m<d$,
Theorem~\ref{v8:mixed:critical} when $m=d$, and $k[u]$ when $m=0$.

For a finite module $Q$ over a Noetherian local ring $T$, express
$\operatorname{Ann}_T(Q)$ as the kernel of
$T\to Q^{\oplus b}$ using a finite generating list of $Q$.
Flatness then shows that this annihilator commutes with completion
and with adjoining formal parameters.  Apply this to the finite
normalization quotient in the displayed model.  Its conductor is the
square of the singleton-group equation, which differs from $f(u)$ by
a unit.  Faithfully flat completion proves
\eqref{v7:mixed:global-conductor}, smoothness off $D$, and local
Cohen--Macaulayness.

\textit{Seminormalization and the gluing defect.}
Define the finite intermediate algebra on $Y$
\[
 \mathcal S=\nu_*\mathcal O_N
       \mathop{\times}_{\rho_*\mathcal O_C}\mathcal O_D.
\]
The same local models identify it with the seminormalization algebra:
locally it is $k[\mathbf h,u^d]+tB$, with the smooth factors adjoined.
Thus $\mathcal S/\mathcal O_Y$ is an invertible $\mathcal O_D$-module,
by \eqref{v7:mixed:affine-conductor}, Theorem~\ref{v8:mixed:critical},
and faithfully flat completion.
The quotient $\nu_*\mathcal O_N/\mathcal S$ is
$\rho_*\mathcal O_C/\mathcal O_D$, giving the first formula in
\eqref{v7:mixed:global-layers}.

\textit{The infinitesimal line bundle.}
The conormal and relative dualizing bundles on $C$ are
\[
 L_C=\mathcal I_C/\mathcal I_C^2=\mathcal O_C(-\ell-1,-1),
 \qquad \omega_\rho=\mathcal O_C(2d-2,0).
\]
Finite flat duality \cite[Tags~0BUZ and 0BVE]{Stacks} identifies a morphism
$\rho_*L_C\to\mathcal O_D(-2,0)$ with a section of
\[
 L_C^\vee\otimes\omega_\rho\otimes\rho^*\mathcal O_D(-2,0)
       =\mathcal O_C(\ell-1,1).
\]
The evaluation section $h(u)$ gives $\tau_h$ on the monic chart,
up to a nonzero duality-trivialization constant.  The map is fiberwise
surjective: the coefficient pairing on each length-$d$ fiber of $\rho$
is perfect, and the nonzero form $h$ of degree $\ell-1<d$ cannot vanish
on that whole fiber, even when it is nonreduced.

The inclusion $\nu_*\mathcal I_C\subset\mathcal S$ induces a canonical map
\[
 i_*\rho_*L_C=\nu_*\mathcal I_C/\nu_*\mathcal I_C^2
       \longrightarrow\mathcal S/\mathcal O_Y,
\]
because $\nu_*\mathcal I_C^2\subset\mathcal O_Y$ is already proved.
Multiplication by $\mathcal I_D$ sends $\nu_*\mathcal I_C$ into
$\nu_*\mathcal I_C^2$, so the map is $\mathcal O_D$-linear.  Subtracting
an $\mathcal O_Y$-lift of a section's restriction to $D$ proves
surjectivity, without entering the map's definition.  Its monic kernel
is $\ker\tau_h$ by \eqref{v7:mixed:jet}.  Both maps have line-bundle
targets on the integral smooth $D$, hence saturated kernels in
$\rho_*L_C$.  Each kernel is the intersection with its generic kernel;
generic agreement therefore proves agreement everywhere.  Thus
$\mathcal S/\mathcal O_Y\simeq\mathcal O_D(-2,0)$ on every
projective chart.

\textit{Non-splitting.}
Both layers are killed by $\mathcal I_D$.
For $\ell<d$, on the finite chart $t[u^c]=[tu^c]\ne0$ in $B/A$.
For $\ell=d$, the recovered generator $b_1=tH_1=t(u-h_1)$
lies in $\mathcal I_D$.  In the quotient identification
$A^{\mathrm{sn}}/A\simeq R_0$, its product with $[u^{d-1}]$ maps to
\[
 \tau_h\bigl((u-h_1)u^{d-1}\bigr)=s-h_1h_{d-1}\ne0.
\]
This is a nonzero polynomial in the universal boundary ring; no
resultant is inverted.
In either case $\mathcal I_D\mathcal Q\ne0$, so the extension cannot
split as an $\mathcal O_Y$-module extension.

\textit{The actual Hilbert polynomial.}
Twisting the two layers by \eqref{v7:mixed:boundary-polarization} gives
\[
 \chi(\mathcal Q(n))=(dn-1)
                 \binom{(d-1)n+\ell-1}{\ell-1}.
\]
On the normalization the Euler characteristic is
$(n+1)\binom{(d-1)n+\ell}{\ell}$, which proves
\eqref{v7:mixed:projective-hilbert}.  Removing only the gluing layer gives
the seminormalization formula with its stated pulled-back polarization.
\end{proof}

\Needspace{9\baselineskip}
\subsection{Residual collisions and the local type}
\label{r22:mixed:artin-type}

For $1\le\ell\le d$, compute type in the actual Artin quotient of
$A$, not its image in a fiber of $B$.  The two parameter rings require
different multiplication laws.

\begin{theorem}[The gcd formula for the local type]
\label{v7:mixed:type}
At a conductor closed point $y\in D$, let $F_s$ denote its binary
binomial of degree $d$, and let $h$ denote its residual binary form of
degree $\ell-1$.  Then
\begin{equation}\label{v7:mixed:gcd-type}
 \operatorname{type}\mathcal O_{Y,y}
          =1+\deg\gcd(h,F_s).
\end{equation}
Thus the non-Gorenstein locus has underlying set
$\{\Res(h,F_s)=0\}\subset D$.  No scheme structure on this locus is
inferred from the set equality.
\end{theorem}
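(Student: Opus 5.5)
Lemma~\ref{r35:mixed:completed-model} reduces the computation to one algebra. The completed local ring at $y$ is the completion of $A_{d,m}$ at the singleton-group output point, with $\ell-m$ regular parameters adjoined. Type is unchanged by completion and by adjoining formal variables, so it suffices to compute the type of $(A_{d,m})_{\mathfrak m_*}$. Here $A=A_{d,m}$ is given by Theorem~\ref{v7:mixed:affine} for $m<d$ and by Theorem~\ref{v8:mixed:critical} for $m=d$. On the monic chart, the point $y$ lies over $t=0$, with fixed $s=u^d$ and residual $h$. The other groups contribute unit factors, so $\gcd(h,F_s)$ is computed inside the singleton group. Endpoints are handled by the reciprocal chart.

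Since $A$ is Cohen--Macaulay and free over the polynomial parameter ring ($R$ or $R^\dagger$), I would cut by the regular sequence formed by the parameters vanishing at $y$. For $R$ these are $\mathbf h-\mathbf h(y)$, $t$ and $w-s$; for $R^\dagger$ they are $\mathbf h-\mathbf h(y)$, $\sigma$ and $w-s$. The result is an Artinian ring, and its type is the dimension of its socle after localizing at $y$.

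For $m<d$, the free basis \eqref{v7:mixed:free-basis} gives the fiber $\bar A=k\oplus\bigoplus_{k\ne c}k\,\bar b_k\oplus k\,\overline{t^2u^c}$. All nonscalar generators lie in $tB$, and modulo $t$ (that is, the parameter) products of two of them land in $t^2B$. Using \eqref{v7:mixed:jet}, I would identify $\bar A$ as $k\oplus V\oplus k$, where $V=t\ker\tau_h$ lives on the fiber $E_s=k[u]/(u^d-s)$. The multiplication $V\times V\to t^2E_s/(\ldots)$ then reduces to the bilinear form $(p,q)\mapsto\tau_h(pq)$ restricted to $\ker\tau_h$, with values in the one-dimensional top piece $k\,\overline{t^2u^c}$. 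The quotient map should read off exactly the $\tau_h$-coordinate. Localization at $y$ picks out the factor of $E_s$ at the root $u(y)$, or keeps all of $E_s$ when $s=0$.

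The socle is then the top line together with the radical of the form $\tau_h(pq)=\varepsilon_s(hpq)$ on $\ker\tau_h$. Because $\varepsilon_s$ is perfect on $E_s$, the radical of $(p,q)\mapsto\varepsilon_s(hpq)$ on all of $E_s$ is the annihilator of $h$ in $E_s$. That annihilator is $(F_s/\gcd(h,F_s))E_s$, of dimension $\deg\gcd(h,F_s)$. Restricting to the hyperplane $\ker\tau_h$ should change nothing, because the annihilator already lies in $\ker\tau_h$ (since $\tau_h(p)=\varepsilon_s(hp)=0$ when $hp=0$). The formula $1+\deg\gcd(h,F_s)$ follows once local factors are matched. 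For $m=d$, the basis \eqref{v8:mixed:critical-basis} modulo $(\sigma,w-s,\mathbf h-\mathbf h(y))$ produces the same pattern. The elements $tH_j$ span $t\ker\tau_h$, and $\sigma u^j$ and $t^3$ form the top layer. The multiplication law differs, since $t^2=\sigma$ is now killed, and so I must recheck which pieces survive.

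The main obstacle is this last point. The products $\bar b_j\bar b_k$ and the top layer must be computed in the actual Artin quotient of $A$ and not in $B/tB$. For $m=d$, killing $\sigma$ changes the top layer: modulo $(\sigma,w-s)$, the span of $\sigma u^j$ collapses and $t^3$ must be tracked. Verifying that the form $V\times V\to\text{top}$ is still $\tau_h(pq)$ will take care. I would check it first with the explicit identities \eqref{v8:mixed:critical-triangular} and $\sigma u^d-w\sigma=(d-1)t^3$, then localize at the $u$-root to get the gcd multiplicity.
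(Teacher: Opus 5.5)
Your overall route is the paper's: reduce by Lemma~\ref{r35:mixed:completed-model}, cut by the parameters of $R$ or $R^\dagger$, and count the socle of the Artin quotient. The execution, however, has genuine gaps.

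\textbf{Subcritical case.} Here two slips cancel.
\begin{enumerate}
\item Since $t=t\cdot1$ is one of the parameters you kill, and $\tau_h(1)=0$ for $\ell<d$, the middle layer is $W=\ker\tau_h/k1$, not $t\ker\tau_h$. This matches your own count of $d-2$ classes $\bar b_k$.
\item The claim that restricting $(p,q)\mapsto\tau_h(pq)$ to the hyperplane $\ker\tau_h$ ``changes nothing'' is false. We have $\ker\tau_h=1^\perp$, and $1$ is isotropic. So the radical on $\ker\tau_h$ is $1^\perp\cap(1^\perp)^\perp=k1\oplus\mathcal R$, where $\mathcal R=\operatorname{Ann}_{E_y}(h)$. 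Moreover $1\notin\mathcal R$, because $h\ne0$ in $E_y$ (as $\deg h<d$).
\end{enumerate}
With $t\ker\tau_h$ as the middle layer, a correct radical computation would give $2+\deg\gcd(h,F_s)$. The right count works on $W$, where the radical is $(k1\oplus\mathcal R)/k1$, of dimension $r_y$.

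You should also drop the localization at a root $u(y)$. Every normalization point with $t=0$, the given $h$, and $u^d=s(y)$ maps to $y$. The quotient $A/(\mathbf h-\mathbf h(y),t,w-s(y))A$ is therefore already local, and all of $E_y=k[u]/(u^d-s(y))$ must be used. Localizing at one root sees only that root's multiplicity in $h$. This undercounts $\deg\gcd(h,F_s)$ whenever $h$ vanishes at several roots of $F_s$.

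\textbf{Critical case $m=d$.} This is not done, and the pattern you anticipate is wrong. Modulo $\mathbf h-\mathbf h(y),\,w-s(y),\,\sigma$ the Artin algebra is $k\oplus V\oplus U\oplus k\omega$, where:
\begin{itemize}
\item $V=\ker\tau_h$, spanned by the classes of the $tH_j$;
\item $U=E_y/k1$, spanned by the classes of the $\sigma u^j$;
\item $\omega=[t^3]$.
\end{itemize}
The classes $\sigma u^j$ are not a top layer. For $p,p'\in V$, write $pp'=a+(u^d-s(y))b$ and use $u^d-s(y)=(d-1)t$ in $B$. This gives $v(p)v(p')=[a]_U+(d-1)\tau_h(b)\omega$. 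Meanwhile $p\cdot[q]=\tau_h(pq)\omega$ and $U^2=0$.

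So $V\times V$ does not pair into the top by $\tau_h(pq)$; that pairing is $V\times U\to k\omega$. The socle part inside $U$ is $V^\perp/k1=(k1\oplus\mathcal R)/k1$, of dimension $r_y$. Together with $k\omega$ this gives $1+r_y$, provided one more step is supplied.

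The indispensable missing step is that no socle element has nonzero $V$-part $p$:
\begin{enumerate}
\item Pairing with $U$ forces $p\in\mathcal R$.
\item Pairing with $V$ forces $pV\subset k1$, and hence $pV=0$ because $hp=0$.
\item The decomposition $E_y=k1\oplus V$ then makes $\tau_h$ multiplicative, so $\tau_h$ is evaluation at a root $\alpha$ and $h=(u^d-s(y))/(u-\alpha)$.
\item Then $hp=0$ forces $s(y)=0$ and $h=u^{d-1}$.
\end{enumerate}
That configuration is the total-zero point for $\ell=d$. It is excluded only by the $\omega$-component $(d-1)\omega$ of $p\cdot tu$, which is exactly the $(d-1)\tau_h(b)\omega$ term you would lose by treating $V\times V$ as a $\tau_h$-pairing. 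Without this argument the formula is unproved wherever the singleton group carries $d$ complements.
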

\begin{proof}
On the monic chart set $E_y=k[u]/(u^d-s(y))$, $\tau=\tau_h$, and
$\mathcal R=\ker(\cdot h:E_y\to E_y)$.
The perfect unweighted coefficient pairing identifies $\mathcal R$
with the radical of $(p,q)\mapsto\tau(pq)$; its dimension is
$r_y=\deg\gcd(h,u^d-s(y))$.

\textit{The subcritical Artin algebra.}
For $\ell<d$, reduce \eqref{v7:mixed:free-basis} by the regular
parameters $\mathbf h-\mathbf h(y),t,w-s(y)$ of $R$.
Put $V=\ker\tau\subset E_y$ and $W=V/k1$.
The Artin algebra so obtained has vector space $k\oplus W\oplus kz$,
where $z$ is the class of $t^2u^c$.  Its multiplication is
\begin{equation}\label{v7:mixed:artin-product}
 v\,v'=\tau_h(vv')z\quad(v,v'\in W),\qquad zW=z^2=0.
\end{equation}
To check this directly, $b_k=tH_k\pmod{t^2B}$, and all terms in
$t^3B$ belong to $tA$ and disappear in the reduction.  Reduction of a
product $t^2H_iH_j$ in the triangular $H$-basis kills every component
except $H_c$; its coefficient is $\tau_h(H_iH_j)$.
The relation for $w$ reduces to $u^d=s(y)$ in this computation, as its
$t$-terms contribute only to $t^3B$.  This proves
\eqref{v7:mixed:artin-product}, not just a tangent-space identity.

\textit{The subcritical socle.}
Here $\tau(1)=0$ and $1\notin\mathcal R$, since $\deg h<d$
and $h\ne0$.  After quotienting the radical, $V$ is the
orthogonal complement of the nonzero isotropic vector $1$; quotienting
by $k1$ leaves a nondegenerate form.  Hence the radical on $W$ has
dimension $r_y$, and the socle in \eqref{v7:mixed:artin-product} has
dimension $1+r_y$.  This is the Cohen--Macaulay type.

\textit{Critical multiplication.}
At $\ell=d$, $\tau(1)=1$, so $V/k1$ is unavailable.
Reduce \eqref{v8:mixed:critical-basis} by
$\mathbf h-\mathbf h(y),w-s(y),\sigma$ of $R^\dagger$.
Set $V=\ker\tau$ and $U=E_y/k1$; then $E_y=k1\oplus V$.
The resulting Artin algebra has vector space
\[
 k\oplus V\oplus U\oplus k\omega,
 \qquad \omega=[t^3],
\]
where $V$ is represented by the $tH_j$ and $U$ by $t^2u^j$,
$1\le j<d$.  To specify the full product, take the unique representatives
$p,p'$ of degree less than $d$ for two elements of $V$, and divide
\[
 pp'=a+(u^d-s(y))b,\qquad \deg a<d,\quad\deg b\le d-2.
\]
Writing $v(p)$ for the $V$-class and $u([a])$ for the $U$-class, one has
\[
 v(p)v(p')=u([a])+(d-1)\tau(b)\omega.
\]
Indeed, multiplying the division identity by $t^2$ and using
$u^d-s(y)=(d-1)t$ gives $t^2a+(d-1)t^3b$; the constant part of
$t^2a$ vanishes, while $t^3b$ reduces to $\tau(b)\omega$ in the
triangular $H$-basis.  In particular the $U$-part is $[pp']$.
Products of a $V$-class and a $U$-class are
\begin{equation}\label{v8:mixed:critical-cross-pairing}
 p\cdot[q]=\tau(pq)\omega.
\end{equation}
Also $U^2=0$ and $\omega$ annihilates the maximal ideal.
These assertions follow directly from $u^d=s(y)+(d-1)t$:
$t^4B\subset\sigma A$ disappears, and
$t^3H_j=\sigma b_j$ disappears for $1\le j<d$.
The displayed $\omega$-term in $V\cdot V$ is retained below;
it need not vanish, even when its $U$-part does.

\textit{The critical socle.}
If a socle element has $V$-part $p$,
\eqref{v8:mixed:critical-cross-pairing} forces $p\in\mathcal R$.
Its products with $V$ have zero $U$-part, so $pV\subset k1$.
Since $hp=0$ and $h\ne0$ in $E_y$, this implies $pV=0$.
If $p\ne0$, the equality $E_y=k1\oplus V$ then gives
$pa=\tau(a)p$ for every $a\in E_y$.  Multiplying twice shows that
$\tau$ is an algebra homomorphism, hence evaluation at a root
$\alpha$ of $u^d-s(y)$.  The perfect coefficient pairing implies
\[
 h=\frac{u^d-s(y)}{u-\alpha},\qquad p\in kh.
\]
The condition $hp=0$ then requires $h(\alpha)=d\alpha^{d-1}=0$.
Thus $\alpha=s(y)=0$, $h=u^{d-1}$ and $p$ is a multiple of
$u^{d-1}$.  In this exceptional case its product with $tu$ has
nonzero $\omega$-part $(d-1)\omega$.  No $U$-part of the socle
candidate can cancel it, since $\tau(uq)=0$ for all $q$ when
$h=u^{d-1}$ and $s(y)=0$.  This contradiction proves $p=0$.

The socle therefore consists of $k\omega$ and the $U$-classes
orthogonal to $V$.  Under the form $(a,b)\mapsto\tau(ab)$ one has
$V^\perp=k1\oplus\mathcal R$, since $\tau(1)=1$.
Modulo $k1$ this has dimension $r_y$.
The socle dimension is consequently $1+r_y$, as asserted.
Both arguments allow $s(y)=0$ and a nonreduced $E_y$.

\textit{The projective boundary.}
The reciprocal and completed product decompositions of
Lemma~\ref{r35:mixed:completed-model} add only regular parameters; factors of
$h$ outside the relevant cluster are coprime to $F_s$.
Thus they preserve this type and identify the gcd with the binary gcd
in \eqref{v7:mixed:gcd-type} on the whole projective boundary.
\end{proof}

\begin{example}[The source and target conductor thickenings differ]
\label{v7:mixed:threefold}
For $(d,\ell)=(3,2)$ write $f=(x-u)(x-a)+t$ and change the generators to
\[
 w=u^3-2tu,\quad \alpha=t(u^2+au),\quad\beta=t^2u.
\]
Then
\[
 A=k[a,t,w,\alpha,\beta]
   =R\oplus R\alpha\oplus R\beta,\quad R=k[a,t,w],\quad B=k[a,t,u].
\]
The conductor is $t^2B$, whereas the reduced boundary ideal is
$I_D=(t,\alpha,\beta)$.  In the Artin reduction $a=t=w=0$, the classes
$\alpha,\beta$ are independent and all their products vanish.
Thus $\beta\notin I_D^2$, although $\beta\in(A:B)$, and
\[
 I_D^2\subsetneq\mathfrak c,\qquad
 \mathfrak c\mathcal O_N=\mathcal I_C^2.
\]
Along $D$, the type is two when $s=a^3$ and one otherwise.  The Hilbert
polynomial is $2n^3-n^2+3n+2$.
The notation ``the double conductor'' must therefore specify whether it
refers to the normalization or to the target.
\end{example}

\subsection{Hilbert series and the multiplicative-string deformation}
\label{r17:mixed:deformation}

With the root-weight grading, $\deg h_j=j$, $\deg t=\ell$, $\deg w=d$.
For $\ell<d$ the free basis gives
\begin{equation}\label{v7:mixed:weighted-hilbert}
 \Hilb_A(z)=
 \frac{1+\sum_{\substack{1\le k<d\\k\ne d-\ell}}z^{\ell+k}+z^{\ell+d}}
 {(1-z^d)\prod_{j=1}^\ell(1-z^j)}
 =\frac{1-z+z^{\ell+1}}{(1-z)\prod_{j=1}^\ell(1-z^j)}.
\end{equation}
At $\ell=d$, Theorem~\ref{v8:mixed:critical} instead gives
\begin{equation}\label{v8:mixed:critical-hilbert}
 \Hilb_A(z)=
 \frac{1+\sum_{j=1}^{d-1}(z^{d+j}+z^{2d+j})+z^{3d}}
 {(1-z^{2d})\prod_{j=1}^{d}(1-z^j)}
 =\frac{1-z+z^{d+1}}{(1-z)\prod_{j=1}^{d}(1-z^j)}.
\end{equation}
The common root-weight series is independent of $d$ for fixed $\ell\le d$;
it is not the projective Hilbert polynomial
\eqref{v7:mixed:projective-hilbert}.  It agrees with the Weil-generic
series for $(r,s)=(\ell,1)$ in
\cite[Theorem~3.2(ii)]{EtingofRains}, but that numerical
agreement does not identify a specialized algebra.

For $\ell=1$, Theorem~\ref{v7:mixed:type} shows that $Y$ is everywhere
Gorenstein, but \eqref{v7:mixed:global-layers} shows that it is nowhere
seminormal along $D$.  In this case the Hilbert polynomial is
$(d-1)n^2+2$, and the target conductor does equal $I_D^2$.
In coordinates $v$ and $\epsilon=v-u$, put
$q=uv^{d-1}$ and
$J=(\epsilon v^j:0\le j\le d-2)=I_D$.
Then $A=k[\epsilon,\epsilon v,\ldots,\epsilon v^{d-2},q]$,
and $B$ is free over $k[\epsilon,q]\subset A$ with basis
$1,v,\ldots,v^{d-1}$, since
$v^d-\epsilon v^{d-1}-q=0$.
As $d-1\le2(d-2)$, every $\epsilon^2v^j$ for $0\le j<d$
is a product of two generators of $J$.
Thus $\epsilon^2B\subseteq J^2\subseteq\epsilon^2B$.
The reciprocal chart gives the same equality; these charts
cover $C$, and off $D$ both ideals are the unit ideal.
Hence the target conductor is $I_D^2$ globally.

\begin{proposition}[A pointwise compatible relation map need not descend]
\label{v7:mixed:relation-obstruction}
For $\ell=1$, the normalized relation morphism
$(u,v)\mapsto\{u^d,v^d\}$ is constant on every geometric normalization
fiber and descends to $Y^{\mathrm{sn}}$, but not to $Y$.
\end{proposition}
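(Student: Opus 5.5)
The argument has three parts: the relation morphism is constant on fibers, it factors through the seminormalization, and it fails to factor through $Y$. All three can be carried out in the explicit $\ell=1$ coordinates recorded just above the statement.

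\emph{Constancy on fibers.} By the proof of Theorem~\ref{v7:mixed:projective}, a geometric normalization fiber has more than one point only over $D$. Over a point of $D$ the singleton $u$ may move within its $\mu_d$-orbit while $v$ stays fixed, and $u=v$ holds on $C$. Moving $u$ to $\zeta^a u$ leaves $u^d$ unchanged, so the unordered pair $\{u^d,v^d\}$ is unchanged. At the endpoints the fiber is a single point. Hence the relation morphism $N\to\sym^2\PP^1$ is constant on every geometric fiber.

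\emph{Descent to $Y^{\mathrm{sn}}$.} I would use Manaresi's equalizer description \eqref{v7:descent:classical}. A function on $N$ that agrees on the two projections to $(N\times_Y N)_{\mathrm{red}}$ lies in the seminormalization. Concretely, the base $\sym^2\PP^1=\PP^2$ has coordinates given by the elementary symmetric functions of $u^d$ and $v^d$. Pulling these back gives sections of a power of $\nu^*\mathcal O_Y$ on $N$, which agree at all geometric points of the reduced fiber product. They therefore descend by the equalizer property; a graph argument as in Theorem~\ref{v7:descent:seminormal-classification} gives the same conclusion.

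A more explicit route uses the local formula $A^{\mathrm{sn}}=k[u^d]+tB$, where $t=v-u=\epsilon$ when $\ell=1$. Both $u^d+v^d$ and $u^dv^d$ reduce modulo $\epsilon$ to functions of $u^d$, so both lie in $A^{\mathrm{sn}}$. The reciprocal chart and the smooth locus are handled the same way.

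\emph{Failure to descend to $Y$.} This is the main step. On the monic chart I would show that $\sigma_1=u^d+v^d\notin A$ using the one-jet criterion \eqref{v7:mixed:jet} with $\ell=1$, $h=1$, $c=d-1$ and $\tau_h=\varepsilon_s$. Write $v=u+t$. Then
\[
 F=u^d+(u+t)^d\equiv 2u^d+d\,t\,u^{d-1}\pmod{t^2B}.
\]
Its zero-order part is $F_0=H(u^d)$ with $H(s)=2s$, and its first-order part is $F_1=du^{d-1}$, so $\tau_h(F_1)=d$. The criterion would require $d=-(d-1)\cdot 2$, that is $3d=2$, which is impossible in characteristic zero.

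Any descent of the relation morphism would make $\sigma_1$, up to the fixed trivialization, a regular function on the monic chart of $Y$. A little care is needed here: one must confirm that the pulled-back coordinate really is this polynomial in the chart and not its product with a unit. I would do this by choosing the affine chart $v\neq\infty$ and noting that the ratio used is a genuine regular function on the base $\PP^2$. Alternatively, I would test $\sigma_2=u^dv^d$ as well and take an affine base coordinate that is regular near a point of $D$. For $\sigma_2\equiv u^{2d}+d\,t\,u^{2d-1}$ we have $H(s)=s^2$ and $\tau_h(F_1)=d\,s$, while the criterion demands $-2(d-1)s$; this also fails. Either failure shows the morphism does not factor through $Y$.
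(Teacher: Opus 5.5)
Your route is the paper's: constancy on fibers, descent to $Y^{\mathrm{sn}}$ through $A^{\mathrm{sn}}=k[u^d]+tB$ (the paper cites the pinching square), and the one-jet test \eqref{v7:mixed:jet} applied to $u^d+v^d$ on the monic chart. However, the decisive computation uses the wrong sign of $t$, so it is invalid as written.

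In the division coordinates \eqref{v7:mixed:division} with $\ell=1$ and $h=1$, one has $x-v=f(x)=(x-u)+t$. Hence $t=f(u)=u-v$ and $v=u-t$. You identified $t$ with the coordinate $\epsilon=v-u=-t$ from the $\ell=1$ paragraph. The jet condition is not invariant under $t\mapsto-t$, because that reverses the sign of $F_1$, so it cannot be applied in your convention. A sanity check shows the problem. With $v=u+t$, the actual generator $Q_d=u^d+(d-1)v^d\equiv du^d+d(d-1)tu^{d-1}$ has $\tau_h(F_1)=d(d-1)$. The criterion demands $-d(d-1)$, so your test would declare $Q_d\notin A$.

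The correct computation is $u^d+(u-t)^d\equiv 2u^d-d\,t\,u^{d-1}\pmod{t^2B}$. This gives $\tau_h(F_1)=-d$ against the required $-2(d-1)$. Likewise, for $u^dv^d$ one gets $-ds$ against $-2(d-1)s$. These values differ precisely because $d\ne2$. So the obstruction holds, and this is exactly where the hypothesis $d\ge3$ enters; your equation $3d=2$ hides that.

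There are two smaller corrections.

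First, over a point of $D$ the fiber is $\{(\zeta^bu,\zeta^bu):b\in C_d\}$. The coincident complement parameter rotates together with the singleton, as it must since $\nu^{-1}(D)=C$. So $v$ does not stay fixed, although $\{u^d,v^d\}$ is constant either way.

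Second, the relation map pulls $\mathcal O_{\PP^2}(1)$ back to $\mathcal O_N(d,d)$. For $d\ge3$ this is not a power of $\nu^*\mathcal O_Y(1)=\mathcal O_N(1,d-1)$. Your first equalizer-of-sections argument therefore does not apply as stated. What actually gives descent to $Y^{\mathrm{sn}}$ is your explicit chart check in $A^{\mathrm{sn}}$, or the graph argument.
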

\begin{proof}
Off $C$, normalization is an isomorphism.  On $C$ we have $u=v$, and the
fibers are determined by $u^d=v^d$, so the relation map factors through
$D$.  The seminormal pinching square gives descent to $Y^{\mathrm{sn}}$.
In the coordinates \eqref{v7:mixed:division}, $v=u-t$ and $h=1$.
If the map descended to $Y$, the coordinate $u^d+(u-t)^d$ on its monic
relation chart would belong to $A$.  Its zero-order value is $2s$ and
its first-order coefficient has $\tau_h$-value $-d$.
Equation \eqref{v7:mixed:jet} instead requires $-2(d-1)$.
These differ for $d\ge3$, proving the obstruction.
\end{proof}

Unlike Example~\ref{v6:spec:no-relation-descent}, the obstruction is
first-order despite compatible point fibers.
For $\ell=1$, Theorem~\ref{v7:linear:sumsets} identifies the transverse
singularities on the nonzero binomial boundary as the classical elliptic
$d$-fold points \cite[Definition~2.1]{Smyth}
(see also \cite[Section~1]{Stevens}); the endpoints still require
Theorem~\ref{v7:mixed:projective}.  For $(d,\ell)=(3,1)$, \eqref{v7:mixed:projective-hilbert}
gives $P_Y(n)=2n^2+2$, so the surface has degree four.

\textit{The multiplicative-string family.}
For the difference calculations below, set
\[
 a_j(\theta)=\sum_{a=1}^{d-1}\theta^{-aj},\qquad
 q_\theta=\theta^{-(d-1)},\qquad
 Q_j(\theta)=u^j+a_j(\theta)P_j(f).
\]
These are the power sums of one singleton and $\ell$ strings of length
$d-1$.  At $\theta=\zeta$ the Laurent polynomial gives the coefficients
in \eqref{v7:mixed:newton}; no quotient $0/0$ is used.
The root-string construction has direct precedents
\cite{EtingofRains,SergeevVeselovMR,KasataniMultiwheel};
Appendix~\ref{r17:mixed:dictionary} retains their exact parameter
comparisons.  The next steps separate boundary normality, lifting,
and finite-output generation.

\Needspace{10\baselineskip}
\subsection{The maximal range with a smooth reduced boundary}
\label{r23:mixed:boundary-range}

\begin{proposition}[The boundary normality threshold]
\label{v8:mixed:boundary-threshold}
For this proposition allow any $\ell\ge1$, with $d\ge3$ fixed.
Let $C=\{f(u)=0\}$ in the singleton--complement normalization,
and give $D_{d,\ell}=\nu(C)$ its reduced image structure.  Then
\[
 D_{d,\ell}\text{ is normal}
 \quad\Longleftrightarrow\quad \ell\le d
 \quad\Longleftrightarrow\quad
 D_{d,\ell}\simeq\PP^1\times\PP^{\ell-1}.
\]
Thus the range $1\le\ell\le d$ in the projective descent theorem
is maximal for smoothness of this reduced boundary image.
\end{proposition}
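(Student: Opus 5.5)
The plan is to identify the reduced boundary image $D_{d,\ell}$ with the image of an auxiliary profile and then apply the homogeneous-style projective normality criterion to that profile. On $C$, the singleton root coincides with a root of $f$. Writing $f=(x-u)h$ as in the proof of Theorem~\ref{v7:mixed:projective}, the singleton and its coincident complement combine into a full mask $C_d$, so the output factors as $F_s\cdot\prod(\text{complements of the roots of }h)$. Here $F_s$ is the binomial of the $d$th power of $u$, and $h$ has degree $\ell-1$. Hence $D_{d,\ell}$ is the reduced image of the profile with one full mask and $\ell-1$ complements. Its normalization is $\PP^1\times\PP^{\ell-1}$ by \cite[Theorem~5.4]{LiProfileGeometry}, with the first factor being the relation coordinate $u^d$. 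Injectivity of $(\epsilon,k)\mapsto d\epsilon+(d-1)k$ for $\epsilon\in\{0,1\}$ makes the two types distinguishable, which gives birationality, so $D_{d,\ell}$ normal is equivalent to $D_{d,\ell}\simeq\PP^1\times\PP^{\ell-1}$. This settles the second equivalence, and smoothness then follows.

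For the direction $\ell\le d$, I will repeat the argument from Theorem~\ref{v7:mixed:projective}. The weighted degree sets are $\{d\}$ for the full mask and $\{1,\ldots,\ell-1\}$ for the complements. The relevant complement Fourier coefficients, $-1$ or $d-1$, are nonzero. Combined with the endpoint injectivity above, \cite[Theorem~6.2]{LiProfileGeometry} gives normality. I only need to confirm that the hypotheses hold throughout $\ell-1\le d-1$. The Fourier coefficient of the full mask vanishes at every $j\not\equiv0$, but on the full-mask factor only the index $d$ (the power $u^d$) is required.

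For the converse, with $\ell\ge d+1$, I will exhibit a failure of normality locally and avoid the global criterion. The natural candidate is a collision in which the residual $h$ contains all $d$ roots of the binomial. Take $h=F_s\cdot h'$ with $\deg h'=\ell-1-d\ge0$, so that the output contains $F_s\cdot\prod_{a\ne0}F_s(\zeta^{-a}\cdot)$. This output equals $F_s^{d}\cdot(\cdots)$ and can also be produced with the full mask at a different place. A cleaner approach works at the total-zero point, following Theorem~\ref{v6:spec:homogeneous-classification}. There the cotangent image at weight $d$ is spanned by the combination $d\,e_{\mathrm{full},1}'+(d-1)e_{\mathrm{comp},d}$, where the full-mask coordinate contributes $u^d$, of weight $d$. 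Meanwhile $\ell-1\ge d$ makes the complement coordinate $e_{\mathrm{comp},d}$ present. Two independent source cotangent directions of weight $d$ then map to a one-dimensional space, since the output Newton sum $Q_d$ is the only weight-$d$ generator. This is a differential rank loss in the sense of Proposition~\ref{v6:spec:differential-rank}, and it rules out $D\simeq\PP^1\times\PP^{\ell-1}$ near that point. It also shows that $D$ is not normal, because a birational finite map from a normal source that fails to be immersive cannot be an isomorphism, so its image is nonnormal.

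The main obstacle is the converse: I must make sure the rank count really uses only the actual output moments of the auxiliary profile. Concretely, I need to check that no other weight-$d$ element of the completed image, for instance a product of lower moments, supplies a second independent linear term. Products lie in $\mathfrak m^2$, so only the $Q_d$ linear term counts. The remaining work is to verify its exact form via Newton's identities with the cluster coordinates of Theorem~\ref{v6:spec:completed-model}, noting that the full-mask factor contributes its $d$th moment only through $u^d$.
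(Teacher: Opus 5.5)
Your proposal is correct. For $\ell\le d$ it is the paper's argument: identify $D_{d,\ell}$ with the image of one full mask and $\ell-1$ complements, take its normalization $\PP^1\times\PP^{\ell-1}$ from \cite[Theorem~5.4]{LiProfileGeometry}, and apply \cite[Theorem~6.2]{LiProfileGeometry}. You should state explicitly that the only hypothesis depending on $\ell$ is the disjointness of the degree sets $\{d\}$ and $\{1,\ldots,\ell-1\}$; the Fourier and endpoint conditions hold for every $\ell$.

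The two routes differ on the converse. The paper simply invokes the necessary direction of the imported affine criterion \cite[Theorem~6.1]{LiProfileGeometry}, where the degree sets overlap at $d$. You prove that failure directly. At the total-zero point the source has two weight-$d$ cotangent directions, $s=u^d$ and $h_d$. The image of the maximal ideal is spanned by the linear parts of the homogeneous $Q_j$, so it meets weight $d$ in at most the line of $Q_d$. A finite birational map onto a normal variety is an isomorphism, so $D$ cannot be normal. This makes the converse self-contained and exhibits an explicit nonnormal point. It agrees with Theorem~\ref{v8r9:mixed:first-excluded}: at $\ell=d+1$ the cotangent class of $w=s-(d-1)c$ is hit, but that of $z=s+c$ is not. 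Because the cotangent image is graded, you need neither completion nor a fiber count.

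Two corrections. First, drop your ``natural candidate'' sentence: no second allocation produces an output $F_s^d\cdots$. The totals $d\epsilon+(d-1)k$ and the equations $b_c=\epsilon+k-n_c$ recover the input uniquely, so $\PP^1\times\PP^{\ell-1}\to D$ is bijective. The nonnormality along $h=F_sh'$ is therefore purely infinitesimal, which is exactly what your cotangent argument detects. Second, do not cite Proposition~\ref{v6:spec:differential-rank}, which concerns homogeneous aperiodic masks. Your full mask is periodic and must be handled through $s=u^d$, as your direct Newton computation already does.
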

\begin{proof}
For $\ell=1$ the image is the binomial line.  Assume $\ell>1$.
On $C$ the output has one full mask and $\ell-1$ complement masks.
Its normalization is $\PP^1\times\PP^{\ell-1}$.
The two weighted degree sets in the monic normality criterion
\cite[Theorem~6.1]{LiProfileGeometry} are $\{d\}$ and
$\{1,\ldots,\ell-1\}$.  They are disjoint exactly when $\ell\le d$,
and all the required Fourier coefficients are nonzero.
The endpoint weighted-sum map is injective: equality of
$d\epsilon+(d-1)k$, with $\epsilon\in\{0,1\}$, implies
$d-1\mid\epsilon-\epsilon'$, hence $\epsilon=\epsilon'$ and $k=k'$.
The projective normality criterion \cite[Theorem~6.2]{LiProfileGeometry} proves sufficiency.  If $\ell>d$, the overlap at degree $d$ already violates
the necessary affine normality condition.
\end{proof}

At $\ell=d+1$, $h=F_s$ is zero in $k[u]/(u^d-s)$, so $\tau_h$
vanishes on that fiber.  This is failure of the surjectivity in
Theorem~\ref{v7:mixed:projective}, not the parameter change at $\ell=d$;
Theorems~\ref{v8r9:mixed:first-excluded} and~\ref{v8r11:mixed:projective-first}
treat the resulting ambient algebra and projective defects.

\subsection{Confluent difference descent and saturated specialization}

For $\ell\le d$, the already proved output algebras determine the
saturated specialization.  The directly specialized equation has a larger
kernel.  Let $\mathscr O=k[\theta,\theta^{-1}]_{(\theta-\zeta)}$, with uniformizer
$\pi=\theta/\zeta-1$, and retain $q_\theta,Q_j(\theta)$ from
Section~\ref{r17:mixed:deformation}.  In
\[
 \mathscr B=\mathscr O[u,v_1,\ldots,v_\ell]^{\mathfrak S_\ell}
             =\mathscr O[\mathbf h,t,u]
\]
put $\mathscr A=\mathscr O[Q_1(\theta),\ldots,Q_r(\theta)]$,
$r=1+(d-1)\ell$.  For a polynomial symmetric in the $v_i$, define
\[
 (\mathscr D_\theta P)(z;v_2,\ldots,v_\ell)
 =P(q_\theta z,\theta z,v_2,\ldots,v_\ell)
     -P(z,z,v_2,\ldots,v_\ell).
\]
Its target is $\mathscr O[z,v_2,\ldots,v_\ell]^{\mathfrak S_{\ell-1}}
=\mathscr O[\mathbf h,z]$.  Symmetry makes this one identity equivalent
to imposing the corresponding identity for every $v_i$.

\begin{proposition}[Saturated confluent specialization]
\label{v8r8:mixed:confluence}
For $d\ge3$ and $1\le\ell\le d$, let
$\mathscr K=\ker\mathscr D_\theta$.  Then
\[
 \mathscr A=\mathscr K,\qquad
 \mathscr A/\pi\mathscr A\xrightarrow{\ \sim\ }A\subset B.
\]
Each root-weight component is finite free over $\mathscr O$; in
particular $\mathscr A$ is a finitely generated flat $\mathscr O$-algebra.
However, the kernel after direct specialization is
\[
 \ker\mathscr D_\zeta=A^{\rm sn}=k[\mathbf h,u^d]+tB,
 \qquad (\ker\mathscr D_\zeta)/A\simeq R_0(-d)
\]
as graded $A$-modules, with $A$ acting on $R_0$ by restriction to $t=0$.
Among elements of
$\ker\mathscr D_\zeta$, those admitting a lift satisfying
$\mathscr D_\theta P=0$ modulo $\pi^2$ are already exactly $A$.
\end{proposition}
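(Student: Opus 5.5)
The plan is to compare $\mathscr A$ with the saturated kernel $\mathscr K$. First I show $\mathscr A\subset\mathscr K$. Then I identify the special fibre of $\mathscr K$ inside $B$ using the one-jet descriptions already proved. Finally I conclude by a gradewise Nakayama argument.

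\emph{Step 1: $\mathscr A\subset\mathscr K$.} Since $\mathscr D_\theta$ is additive and $\mathscr K$ is a subalgebra (the substitution is a ring map), it suffices to check the generators. Substitute $u=q_\theta z$, $v_1=\theta z$ into $Q_j(\theta)$. The $v_1$ string contributes $\sum_{a=1}^{d-1}\theta^{-(a-1)j}z^j$, and the singleton contributes $\theta^{-(d-1)j}z^j$; together they give $\sum_{b=0}^{d-1}\theta^{-bj}z^j$. Substituting $u=v_1=z$ instead gives $z^j+\sum_{a\ge1}\theta^{-aj}z^j$, which is the same sum. The other $v_i$ are untouched, so $\mathscr D_\theta Q_j(\theta)=0$.

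\emph{Step 2: saturation.} $\mathscr K$ is the kernel of an $\mathscr O$-linear map into the torsion-free module $\mathscr O[\mathbf h,z]$. Hence $\mathscr K\cap\pi\mathscr B=\pi\mathscr K$, and $\mathscr K/\pi\mathscr K$ embeds in $B$. Each root-weight component $\mathscr K_n$ is a saturated submodule of a finite free $\mathscr O$-module, hence finite free. For the same reason, the image of $\mathscr K/\pi\mathscr K$ consists of elements $P_0\in B$ admitting a lift $P$ with $\mathscr D_\theta P=0$, and a fortiori with $\mathscr D_\theta P\equiv0\pmod{\pi^2}$.

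\emph{Step 3: the direct specialization.} At $\theta=\zeta$ one has $q_\zeta=\zeta$, so $\mathscr D_\zeta P=0$ says $P(\zeta z,\zeta z,\dots)=P(z,z,\dots)$. In the coordinates \eqref{v7:mixed:division}, setting $v_1=u$ is exactly $t=0$ with $h$ fixed, and the substitution rotates $u$. Writing $P=F_0+tF_1+\cdots$, the condition is therefore $F_0(\mathbf h,\zeta u)=F_0(\mathbf h,u)$, that is $F_0\in k[\mathbf h,u^d]$, with no condition on $tB$. This gives $\ker\mathscr D_\zeta=k[\mathbf h,u^d]+tB=A^{\rm sn}$. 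The quotient description $(\ker\mathscr D_\zeta)/A\simeq R_0$ then comes from \eqref{v7:mixed:affine-conductor} and Theorem~\ref{v8:mixed:critical}. The grading shift is $d$ because the generator is $[tu^c]$, of weight $\ell+c=d$, when $\ell<d$, and $[t]$, of weight $\ell=d$, when $\ell=d$.

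\emph{Step 4: the first-order condition (main obstacle).} I would show that if $P_0\in A^{\rm sn}$ has a lift $P_0+\pi P_1$ with $\mathscr D_\theta(P_0+\pi P_1)\equiv0\pmod{\pi^2}$, then $P_0$ satisfies the jet condition \eqref{v7:mixed:jet}; by Theorems~\ref{v7:mixed:affine} and~\ref{v8:mixed:critical} this means $P_0\in A$. Expanding to first order in $\pi$, the condition reads
\[
 \partial_\pi\big|_{0}\bigl(P_0(q_\theta z,\theta z,\dots)\bigr)+(\mathscr D_\zeta P_1)=0 .
\]
The derivative term is $\zeta z\bigl(-(d-1)\partial_uP_0+\partial_{v_1}P_0\bigr)$ evaluated at $u=v_1=\zeta z$. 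So the condition says that this derivative class lies in the image of $\mathscr D_\zeta$.

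The hard part is computing the cokernel of $\mathscr D_\zeta$ on each graded piece and the induced functional on $A^{\rm sn}$. The cokernel should be the non-$\mu_d$-invariant part of $k[\mathbf h,z]$, and pairing against it should yield a functional on $A^{\rm sn}$. I would rewrite $\partial_{v_1}$ and $\partial_u$ in the coordinates $(\mathbf h,t,u)$, using $t=f(u)$ and $\partial_{v_1}t=-h(u)$ on $t=0$. The $\partial_{v_1}$ term then contributes $h(u)F_1$, while the $\partial_u$ term contributes $\partial_uF_0=du^{d-1}\partial_sH$. Projecting onto the $z^{-1}$-type component modulo invariants, I expect to obtain $\varepsilon_s(hF_1)+(d-1)\partial_sH$, that is, $\tau_h(F_1)+(d-1)\partial_sH$, which recovers \eqref{v7:mixed:jet}. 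As a consistency check I would verify the constants on $t$, on $tu^c$, and on $u^d$ against Step 1. I would also check the case $\ell=d$, where $\tau_h(1)=1$.

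\emph{Step 5: conclusion.} Steps 2 and 4 show that $\mathscr K/\pi\mathscr K$, and even the larger set of elements admitting a lift killed modulo $\pi^2$, lies in $A$. Conversely, the specializations $Q_j(\zeta)$ generate $A$, so $\mathscr A_n\to\mathscr K_n/\pi\mathscr K_n$ is surjective for every $n$. Nakayama's lemma over the local ring $\mathscr O$, applied to the finite module $\mathscr K_n$, gives $\mathscr A_n=\mathscr K_n$. Therefore $\mathscr A=\mathscr K$ is flat with finite free components, and $\mathscr A/\pi\mathscr A\cong A$. The last claim also follows: every element of $A$ lifts to $\mathscr A=\mathscr K$.
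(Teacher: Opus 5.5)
Your architecture (generators in the kernel, saturation, direct specialization, gradewise Nakayama) is the paper's, and Steps~1--3 and~5 are sound. The gap is in Step~4. That step is the real content of the proposition, and you only anticipate its outcome; as sketched it would not produce the jet condition.

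First, the cokernel is backwards. $\mathscr D_\zeta$ sends $g(\mathbf h)z^p$ to $(\zeta^p-1)g(\mathbf h)z^p$, so its image is spanned by the monomials with $d\nmid p$. Its cokernel is therefore the invariant part $k[\mathbf h,z^d]$. The obstruction is the cyclic sum $\sum_{a=0}^{d-1}(\mathscr D_1P_0)(\zeta^az)$, in which the term $\mathscr D_\zeta P_1$ telescopes.

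Second, your chain rule drops terms. At fixed $v$ one has $\partial_u t=f'(u)=h(u)$ identically. So the $-(d-1)\partial_u$ term contributes $-(d-1)h(u)F_1$ in addition to $-(d-1)\partial_uF_0$. Combined with $\partial_{v_1}t=-h(u)$, the total coefficient of $\zeta z\,h(\zeta z)F_1(\mathbf h,\zeta z)$ is $-d$, not $\pm1$. If you keep only the contributions you list, the cyclic sum gives $\tau_h(F_1)=-d(d-1)\partial_sH$. This already fails for $Q_d$, where $\partial_sH=d$ and $\tau_h(F_1)=-d(d-1)$.

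Third, both derivatives also move the division coordinates $\mathbf h$. Through the divided difference $(h(x)-h(u))/(x-u)$ this produces the extra term $-d\zeta z\sum_{i=1}^{\ell-1}\chi_i(\zeta z)\,\partial_{h_i}F_0(\mathbf h,\zeta z)$, where $\chi_i(u)=\sum_{a=0}^{i-1}h_au^{i-1-a}$. This term vanishes in the cyclic sum only because $\partial_{h_i}F_0$ is $\mu_d$-invariant and the exponents of $z\chi_i(z)$ lie in $[1,i]$ with $i\le\ell-1<d$. That is exactly where the hypothesis $\ell\le d$ is used, and your Step~4 never uses it. At $\ell=d+1$ the $i=d$ term survives and changes the condition (compare the operator $\mathcal V$ in Proposition~\ref{v8r10:mixed:certificate}).

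With all three contributions the identity is $\sum_a(\mathscr D_1P_0)(\zeta^az)=-d^2z^d\bigl((d-1)\partial_sH+\tau_h(F_1)\bigr)|_{s=z^d}$. You then cancel $z^d$ in the domain $k[\mathbf h,z]$ and use injectivity of $R_0\to k[\mathbf h,z]$, $s\mapsto z^d$, to obtain \eqref{v7:mixed:jet}. Only then do Theorems~\ref{v7:mixed:affine} and~\ref{v8:mixed:critical} place $P_0$ in $A$. Without this computation you have not shown that the image of $\mathscr K/\pi\mathscr K$ lies in $A$ rather than merely in $A^{\rm sn}$. That inclusion is what your Step~5 Nakayama argument needs, so Step~5 cannot start.
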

\begin{proof}
\textit{Direct specialization.}
The identity
\[
 q_\theta^j-1+a_j(\theta)(\theta^j-1)=0
\]
shows $\mathscr A\subset\mathscr K$; higher output Newton sums are
polynomials in the finite list used to define $\mathscr A$.
Restriction to $v_1=u$ is restriction to $t=0$, whose quotient is
$k[\mathbf h,u]$.  At $\theta=\zeta$ the two evaluated pairs are
$(\zeta z,\zeta z)$ and $(z,z)$, with the same residual polynomial $h$.
Thus $\ker\mathscr D_\zeta=k[\mathbf h,u^d]+tB$.

\textit{First-order coordinates.}\label{r23:mixed:first-variation}
Suppose $P+\pi P^{(1)}$ is a lift modulo $\pi^2$, with
$P=F_0+tF_1\pmod{t^2B}$.  Then $F_0=H(\mathbf h,u^d)$.
The following division identities hold for every $\ell\ge1$, without
removing collision loci.  With
$h(x)=\prod_{i=2}^{\ell}(x-v_i)$, the division coordinates of the shifted
pair $(q_\theta z,\theta z)$ are
\[
 \widetilde h(x)=h(x)+(q_\theta-\theta)z
       \frac{h(x)-h(q_\theta z)}{x-q_\theta z},\qquad
 \widetilde t=(q_\theta-\theta)z h(q_\theta z).
\]
The divided difference is a polynomial.  Write $h_0=1$ and
$\chi_i(u)=\sum_{a=0}^{i-1}h_a u^{i-1-a}$ for $1\le i<\ell$.
Since
$q_\theta=\zeta(1-(d-1)\pi)+O(\pi^2)$, the coefficient
$\mathscr D_1P$ of $\pi$ in $\mathscr D_\theta P$ is
\[
 \begin{split}
 \mathscr D_1P={}&-(d-1)\zeta z\,\partial_uF_0(\mathbf h,\zeta z)\\
 &-d\zeta z\sum_{i=1}^{\ell-1}\chi_i(\zeta z)
                         \partial_{h_i}F_0(\mathbf h,\zeta z)\\
 &-d\zeta z h(\zeta z)F_1(\mathbf h,\zeta z).
 \end{split}
\]
The derivatives use $(\mathbf h,u)$.  Now use $\ell\le d$: in
$\mathscr D_\zeta P^{(1)}+\mathscr D_1P=0$, sum over the substitutions
$z\mapsto\zeta^a z$, $0\le a<d$.
The cyclic sum of $\mathscr D_\zeta P^{(1)}$ telescopes.
The $h_i$-variation terms sum to zero because every power in
$z\chi_i(z)$ lies between $1$ and $i\le\ell-1<d$.
The remaining two terms give the polynomial identity
\[
 \sum_{a=0}^{d-1}(\mathscr D_1P)(\zeta^az)
 =-d^2z^d\bigl((d-1)\partial_sH+
                       \tau_h(F_1)\bigr)\big|_{s=z^d}.
\]
\textit{Using the actual algebra.}
Cancel $z^d$ in the universal domain and use
$R_0\hookrightarrow k[\mathbf h,z]$ to obtain \eqref{v7:mixed:jet}.
This does not localize away from $s=0$.
Theorems~\ref{v7:mixed:affine} and \ref{v8:mixed:critical} give $P\in A$.
Conversely, expressing an element of $A$ in the finite $Q_j$ and replacing
them by $Q_j(\theta)$ gives an exact lift in $\mathscr A$.

\textit{Saturation.}\label{r23:mixed:saturation}
In each root weight $n$, source and target of $\mathscr D_\theta$
are finite free $\mathscr O$-modules.  Its kernel $\mathscr K_n$ is
finite free and saturated in $\mathscr B_n$: if $\pi P$ belongs to
the kernel, the torsion-free target forces $P$ to belong to it.
Consequently reduction embeds $\mathscr K_n/\pi\mathscr K_n$ into $B_n$.
The preceding obstruction calculation and the generator lifts show that
its image is $A_n$.  Thus
$\mathscr K_n=\mathscr A_n+\pi\mathscr K_n$.
The module $\mathscr K_n/\mathscr A_n$ is finite over the local ring
$\mathscr O$, so Nakayama's lemma makes it zero.  This proves all
assertions about $\mathscr A$ and its embedded special fiber.
Finally $[tu^{d-\ell}]$ generates $A^{\rm sn}/A$ by the jet test,
including $[t]$ when $\ell=d$; its root weight is $d$.
This proves the last displayed graded formula.
\end{proof}

This affine root-graded specialization does not give a projective
compactification.  At $\ell=d+1$ the residual term with $i=d$ can survive
the cyclic sum; the next calculation retains all such terms without
assuming generation.

\subsection{The lifting kernel and a finite generation certificate}
\label{r23:mixed:kernel-certificate}

Now allow every $\ell\ge1$, with $d\ge3$ fixed.  Retain $B$, $R_0$
and the finite output algebra $A=k[Q_1,\ldots,Q_r]$; do not assume
$R_0\subset A$.  With $h_0=1$, define
\[
 \gamma_i(s)=\sum_{q=1}^{\lfloor i/d\rfloor}h_{i-qd}s^{q-1},\qquad
 \mathcal V=(d-1)\partial_s+
              \sum_{i=d}^{\ell-1}\gamma_i(s)\partial_{h_i}.
\]
An empty sum is zero.  On $S_0=R_0+tB\subset B$ put
\[
 \Phi\bigl(H(\mathbf h,u^d)+tF_1+O(t^2)\bigr)
       =\mathcal V H+\tau_h(F_1),\qquad M=\ker\Phi.
\]
Thus $\Phi$ is a derivation relative to restriction $S_0\to R_0$, not
an $R_0$-linear map on $S_0$.  At this stage $S_0$ is the directly
specialized kernel, not an asserted seminormalization of $A$.

\begin{proposition}[Cyclic obstruction and a finite generation certificate]
\label{v8r10:mixed:certificate}
For every $d\ge3$ and $\ell\ge1$, the actual output satisfies $A\subset M$,
and
\[
 \Hilb_M(T)=\mathcal H_\ell(T):=
       \frac{1-T+T^{\ell+1}}{(1-T)\prod_{j=1}^{\ell}(1-T^j)}.
\]
In the difference family above, reduction identifies
$\mathscr K/\pi\mathscr K$ with $M\subset B$, where
$\mathscr K=\ker\mathscr D_\theta$.  An element of
$\ker\mathscr D_\zeta=S_0$ admits a lift modulo $\pi^2$ exactly when
it belongs to $M$; every such element has an exact lift in $\mathscr K$.
These statements about the difference equation do not alone assert $A=M$.

Here is a finite sufficient certificate for that last equality.
Exhibit homogeneous $b_1,\ldots,b_{\ell+1}\in A$,
algebraically independent, and homogeneous
$a_1,\ldots,a_N\in A$ linearly independent over
$R=k[b_1,\ldots,b_{\ell+1}]$, such that
\[
 \frac{\sum_{i=1}^N T^{\deg a_i}}
      {\prod_{j=1}^{\ell+1}(1-T^{\deg b_j})}=\mathcal H_\ell(T).
\]
Membership of these finitely many elements must be certified in the
finite output generators, not inferred from $\Phi=0$.  Then
\[
 A=M=\bigoplus_i Ra_i,\qquad (A:B)=t^2B,
 \qquad \mathscr A=\mathscr K,\qquad
 \mathscr A/\pi\mathscr A\simeq A\subset B.
\]
In particular $A$ is Cohen--Macaulay and this is an embedded flat
specialization.  No smoothness of its reduced boundary is assumed.
\end{proposition}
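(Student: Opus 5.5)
The plan is to prove the proposition in four steps, in the order of the statement: the inclusion $A\subset M$, the identification of the lifting kernel with $M$, the Hilbert series of $M$, and finally the certificate, which is formal once the first three steps are in place.

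\emph{Step 1: the cyclic first-order computation.} I would rerun the first-order computation of Proposition~\ref{v8r8:mixed:confluence} with no restriction on $\ell$. For $P=F_0+tF_1+O(t^2)\in S_0$ with $F_0=H(\mathbf h,u^d)$, the coefficient $\mathscr D_1P$ has the same three-term formula, coming from the division identities for $(\widetilde h,\widetilde t)$. Next I would sum over the substitutions $z\mapsto\zeta^az$, $0\le a<d$.
\begin{itemize}
\item The term $\mathscr D_\zeta P^{(1)}$ telescopes, as before.
\item In the $h_i$-variation terms, only the monomials $z^{qd}$ of $z\chi_i(z)$ survive the cyclic sum. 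For $i\ge d$ these produce exactly the coefficients $\gamma_i(s)$.
\item The two remaining terms again give $(d-1)\partial_sH+\tau_h(F_1)$.
\end{itemize}
This yields the identity
\[
 \sum_a(\mathscr D_1P)(\zeta^az)=-d^2z^d\,(\Phi P)|_{s=z^d},
\]
so a lift modulo $\pi^2$ forces $\Phi P=0$. For the converse, given $\Phi P=0$, I would solve $\mathscr D_\zeta P^{(1)}=-\mathscr D_1P$ by inverting the cyclic difference on the kernel of the cyclic average; this is where the cyclic sum being zero is used. The inclusion $A\subset M$ then follows because each $Q_j(\theta)$ lies in $\mathscr K$, and the same identity applies to any element of $\mathscr K$ reduced modulo $\pi$.

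\emph{Step 2: saturation.} The argument is identical to the saturation step in Proposition~\ref{v8r8:mixed:confluence}. In each root weight, $\mathscr K_n$ is saturated and finite free, so reduction embeds $\mathscr K_n/\pi\mathscr K_n$ into $B_n$, with image contained in $M_n$ by Step~1. To obtain equality with $M_n$, I would compare ranks: the rank of $\mathscr K_n$ equals the dimension of the generic-fiber kernel, and this is given by the Weil-generic series $\mathcal H_\ell$ (Sergeev--Veselov, \cite[Theorem~3.2(ii)]{EtingofRains}, via Appendix~\ref{r17:mixed:dictionary}). Exact lifting of every element of $M$ then follows from $\dim M_n\le\operatorname{rank}\mathscr K_n$ together with the embedding.

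\emph{Step 3: the Hilbert series of $M$ (main obstacle).} I would compute $\Hilb_M$ directly from the exact sequence $0\to M\to S_0\xrightarrow{\Phi}R_0(-d)\to0$ in root weights. The series of $S_0$ is that of $R_0$ plus that of $tB$. I expect the hardest point to be surjectivity of $\Phi$ when $h=F_s$ degenerates for $\ell>d$. To prove it, I would exhibit preimages: $\Phi(tu^{d-\ell\bmod d}\cdot(\cdot))$ together with $\mathcal V$ acting on $R_0$, since $\mathcal V$ has the nonvanishing $\partial_s$ coefficient $d-1$, so that $\Phi(R_0)$ already surjects onto $R_0$ in lower weight after a shift by $d$. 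With surjectivity in hand, the series becomes
\[
 \frac{1}{(1-T^d)\prod_{j<\ell}(1-T^j)}+\frac{T^\ell}{(1-T)\prod_{j<\ell}(1-T^j)}-\frac{T^d}{(1-T^d)\prod_{j<\ell}(1-T^j)},
\]
which simplifies to $\mathcal H_\ell$. If the surjectivity argument fails, I would fall back on the rank bound from Step~2, which gives $\Hilb_M\ge\mathcal H_\ell$ coefficientwise.

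\emph{Step 4: the certificate.} The hypotheses make $\bigoplus Ra_i$ a free $R$-submodule of $A\subset M$ whose Hilbert series is $\mathcal H_\ell=\Hilb_M$. Equality of series forces equality in every weight, so $A=M=\bigoplus Ra_i$, and $A$ is Cohen--Macaulay. The conductor $(A:B)=t^2B$ follows as in Theorem~\ref{v7:mixed:affine}, provided $t^2B\subset M=A$. I would check that inclusion first: $\Phi$ vanishes on $t^2B$. I would then show that no $tp$ with $p\ne0$ modulo $t$ lies in the conductor, by applying $\Phi$ to $tpq$ for all $q$. This pairs $p$ against the weighted functional; to conclude $p=0$ in the domain, I would use the $u$-variation with $s$ generic. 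Finally, $\mathscr A\subset\mathscr K$ and both reduce onto $A=M$, so the Nakayama step of Proposition~\ref{v8r8:mixed:confluence} gives $\mathscr A=\mathscr K$ and the flat embedded specialization.
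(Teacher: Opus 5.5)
Your Steps 1, 3 and 4 follow the paper: the cyclic sum of $\mathscr D_1$ producing $\mathcal VH+\tau_h(F_1)$, surjectivity of $\Phi$, the dimension squeeze, the conductor via the perfect pairing $\varepsilon_s$ in the domain $E\simeq k[\mathbf h,u]$, and Nakayama. Step 2, however, has a genuine gap.

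The identification $\mathscr K_n/\pi\mathscr K_n=M_n$, and with it exact lifting of all of $M$ for arbitrary $\ell$, needs $\operatorname{rank}_{\mathscr O}\mathscr K_n\ge\dim M_n$. You take this rank from the Weil-generic series of Etingof--Rains. But $\mathscr K_n\otimes k(\theta)$ is the difference kernel at $(q,t)=(\theta^{-(d-1)},\theta)$, i.e.\ at the generic point of the fixed curve $qt^{d-1}=1$. Along that curve the deformed power sums become ordinary power sums of the string alphabet. A Weil-generic statement excludes countably many proper subvarieties of the $(q,t)$-plane, so it says nothing about the generic point of one prescribed curve. Kernel dimension is only upper semicontinuous and could jump along the whole curve.

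To make your route work you would need an extra argument. One option is to use the kernel--algebra identification at every nonresonant parameter \cite[Proposition~3.5(i)]{EtingofRains}, which covers transcendental $\theta$. Then sandwich: $\dim\ker$ is upper semicontinuous, and the weight-$n$ dimension of the generated algebra is lower semicontinuous. Both agree at Weil-generic points, so both take the Weil-generic value at transcendental points of the curve. You would also have to check that the parameter dictionary carries that operator exactly to $\mathscr D_\theta$.

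The paper avoids all of this internally. It works on the lattice $\mathscr C^\sharp$ with basis $\epsilon_py^pm_\mu$, where $\epsilon_p=\pi$ if $d\mid p$ and $\epsilon_p=1$ otherwise. There $\mathscr D_\theta(u^pm_\mu)$ is triangular in partition length with unit diagonal $(q_\theta^p-1)/\epsilon_p$ (residue $-(d-1)p$ when $d\mid p$). Hence $\mathscr D_\theta\colon\mathscr B\to\mathscr C^\sharp$ is split surjective in each weight. Its reduction consists of the $y^p$-coefficients of $\mathscr D_\zeta$ for $d\nmid p$ and of $\mathscr D_1$ for $d\mid p$, whose common kernel is $M$ by your Step 1. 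Splitness gives $\mathscr K/\pi\mathscr K\simeq M$ and exact lifting at once. It computes the generic rank instead of importing it, and keeps the proposition independent of the external comparisons.

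Step 3 needs two corrections. First, since $B$ contains all powers of $t$, the $tB$ term is $T^\ell/\bigl((1-T)(1-T^\ell)\prod_{j<\ell}(1-T^j)\bigr)$. Your displayed sum equals $(1-T+T^\ell)/\bigl((1-T)\prod_{j<\ell}(1-T^j)\bigr)\ne\mathcal H_\ell$. The corrected sum gives $\mathcal H_\ell$ via $(1-T)(1-T^\ell)+T^\ell=1-T+T^{\ell+1}$.

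Second, your fallback $\Hilb_M\ge\mathcal H_\ell$ points the wrong way. The embedding $\mathscr K_n/\pi\mathscr K_n\hookrightarrow M_n$ already gives $\dim M_n\ge\operatorname{rank}\mathscr K_n$, whereas Step 2 needs the reverse. The fallback is unnecessary anyway: surjectivity of $\Phi$ comes from $\mathcal V$ alone, and the $\tau_h$ part is not surjective once $\ell>d$. Indeed $\mathcal V$ is locally nilpotent with slice $x=s/(d-1)$, so $\sum_{j\ge0}(-1)^jx^{j+1}\mathcal V^jH/(j+1)!$ is an explicit preimage of $H$.
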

\begin{proof}
\textit{The kernel and its Hilbert series.}
The operator $\mathcal V$ lowers root weight by $d$ and
$\mathcal V(s)=d-1$.  It is locally nilpotent because it strictly lowers
the positive root-weight grading.  It is surjective: for
$x=s/(d-1)$ and $H\in R_0$ the finite sum
\[
 \sum_{j\ge0}\frac{(-1)^jx^{j+1}}{(j+1)!}\mathcal V^jH
\]
has $\mathcal V$-derivative $H$.  Thus $\Phi$ is surjective.  Its degree is $-d$,
so $S_0=R_0\oplus tB$ as graded vector spaces gives
\[
 \Hilb_M=(1-T^d)\Hilb_{R_0}+T^\ell\Hilb_B=\mathcal H_\ell.
\]
\textit{The first-order obstruction.}
Use the universal coordinate calculation in
Section~\ref{r23:mixed:first-variation}, now retaining every $h_i$ term.
The cyclic sum of $u\chi_i(u)$ is
$d\sum_{q\ge1}h_{i-qd}s^q=d s\gamma_i(s)$.  Consequently, for
$F=H(\mathbf h,u^d)+tF_1+O(t^2)$,
\[
 \sum_{a=0}^{d-1}(\mathscr D_1F)(\zeta^a y)
       =-d^2y^d\bigl(\mathcal VH+\tau_h(F_1)\bigr)_{s=y^d}.
\]
The difference operator at $\pi=0$ is the cyclic difference on
restriction to $t=0$.  Its image is precisely the subspace of
$k[\mathbf h,y]$ with zero cyclic sum, since the characteristic is zero.
This proves the stated criterion for a lift modulo $\pi^2$.
Cancellation of $y^d$ takes place in the universal polynomial domain,
not on a punctured open set.  Each $Q_j$ has the exact lift
$Q_j(\theta)$; hence $A\subset M$.

\textit{Exact lifts before generation.}
To prove exact lifting without assuming generation, use the monomial
symmetric basis $m_\mu$ in the residual $\ell-1$ root variables.
In the target $\mathscr C=\mathscr O[y,v_2,\ldots,v_\ell]^{\mathfrak S_{\ell-1}}$
consider the $\mathscr O$-lattice with basis
\[
 \epsilon_p y^p m_\mu,\qquad p\ge1,\quad \operatorname{length}(\mu)\le\ell-1,
 \qquad
 \epsilon_p=\begin{cases}\pi,&d\mid p,\\1,&d\nmid p.\end{cases}
\]
Call it $\mathscr C^\sharp$; it is a module, not an asserted subalgebra.
Every coefficient of $\mathscr D_\theta$ at $y^p$ is divisible by
$\epsilon_p$, and its constant coefficient is zero.  This follows by
expanding a source basis element $u^a m_\lambda(v_1,\ldots,v_\ell)$:
the coefficient from a selected part $b$ is $q_\theta^a\theta^b-1$,
whose value at $\zeta$ is $\zeta^{a+b}-1$.
For $p\ge1$ and $\mu$ as above, we use the same partition notation
on both sides of the difference map: in the source $m_\mu$ is evaluated
in the $\ell$ variables $v_1,\ldots,v_\ell$, while in the target the
symbols $m_\nu$ are evaluated in the $\ell-1$ residual variables
$v_2,\ldots,v_\ell$.  Then
\[
 \mathscr D_\theta(u^p m_\mu)
  =(q_\theta^p-1)y^p m_\mu+
    \sum_{b\in\operatorname{parts}_+(\mu)}
       (q_\theta^p\theta^b-1)y^{p+b}m_{\mu\setminus(b)}.
\]
The sum is over distinct positive parts.  The other partitions have
strictly smaller length.  Relative to the displayed target basis the
diagonal coefficient $(q_\theta^p-1)/\epsilon_p$ is a unit: for
$d\mid p$ its residue is $-(d-1)p\ne0$.  Induction on partition length
therefore proves that $\mathscr D_\theta:\mathscr B\to\mathscr C^\sharp$
is surjective and split in each root weight.  On reduction, its equations
are the noninvariant coefficients of $\mathscr D_0$ and the invariant
coefficients of $\mathscr D_1$.  Their common kernel is exactly $M$ by
the cyclic calculation.  The split sequence proves
$\mathscr K/\pi\mathscr K\simeq M$ and exact lifting of every element
of $M$.

\textit{Generation by finite outputs.}
The certified inclusions $\bigoplus_iRa_i\subset A\subset M$ and the
rational-function identity force equality in every root weight.  Thus
$A=M$ is finite free over $R$, hence Cohen--Macaulay.

\textit{Conductor and embedded specialization.}
The conductor argument of Theorem~\ref{v7:mixed:affine} now applies.
First $t^2B\subset A$.  Reducing a conductor element modulo $t$ gives
$H\in R_0$ with $H\,k[\mathbf h,u]\subset R_0$; since the conductor of
$R_0=k[\mathbf h,u^d]$ in $k[\mathbf h,u]$ is zero, $H=0$.  Thus it lies
in $tB$.  If $tp$ is its first jet, multiplication by arbitrary
$q\in k[\mathbf h,u]$ and the jet condition give $\tau_h(pq)=0$.
Via $R_0[u]/(u^d-s)\simeq k[\mathbf h,u]$, the perfect unweighted pairing
then gives $hp=0$, hence $p=0$ and $(A:B)=t^2B$.
For specialization, the split kernel and exact generator lifts give
$\mathscr K_n=\mathscr A_n+\pi\mathscr K_n$.
The finite-module Nakayama and saturation argument in
Section~\ref{r23:mixed:saturation} then proves
$\mathscr A=\mathscr K$ and the embedded special fiber.
\end{proof}

The dimension argument is standard; compare
\cite[Lemmas~2.2--2.3 and the proof of Theorem~2.4]{EtingofRains}
and \cite[proof of Theorem~5.8]{SergeevVeselovMR}.
The specific inputs here are the scaled target $\mathscr C^\sharp$, its
split coefficient map, and a certificate of membership and independence
in the prescribed finite outputs.  The kernel calculation alone does
not supply that certificate.

\subsection{Uniform descent across the first nonnormal boundary}

For $\ell=d+1$ and every $d\ge3$, we now prove the finite certificate:
output membership, parameter independence and a free basis.
Theorem~\ref{v8r11:mixed:projective-first} supplies the separate
projective calculation.

\begin{theorem}[One-jet descent at $\ell=d+1$]
\label{v8r9:mixed:first-excluded}
Let $d\ge3$, $\ell=d+1$, and put
\[
 h=x^d+a_1x^{d-1}+\cdots+a_{d-1}x+c,\quad f=(x-u)h+t,
 \quad B=k[\mathbf a,c,t,u],\quad w=u^d-(d-1)c,\quad z=u^d+c.
\]
Define, for $1\le i<d$,
\[
 \begin{aligned}
 U_{d+i}&=a_i z-dtu^{i-1},& U_{2d}&=z^2-2dtu^{d-1},\\
 U_{2d+i}&=t(zu^{i-1}-a_i u^{d-1}),& U_{3d}&=z^3-3dztu^{d-1}.
 \end{aligned}
\]
For the actual degree-$d^2$ output algebra $A=k[Q_1,\ldots,Q_{d^2}]$,
the ring $R=k[\mathbf a,w,U_{d+1},U_{2d}]$ is polynomial and
\[
 A=R\ \oplus\!\bigoplus_{i=2}^{d-1}RU_{d+i}
       \ \oplus\!\bigoplus_{i=1}^{d-1}RU_{2d+i}
       \ \oplus RU_{3d}\ \oplus R(U_{d+2}U_{2d-1}).
\]
This is free of rank $2d$.  In particular $A$ is Cohen--Macaulay.
For $F=F_0+tF_1\pmod{t^2B}$, actual membership is equivalent to
\[
 F_0=H(\mathbf a,c,u^d),\qquad
 \tau_h(F_1)+\partial_cH+(d-1)\partial_sH=0,
 \qquad R_0=k[\mathbf a,c,s]=k[\mathbf a,w,z].
\]
Here $s=u^d$, and
$\tau_h(\sum_{j=0}^{d-1}p_ju^j)=\sum_{i=1}^{d-1}a_i p_{i-1}+zp_{d-1}$.
The reduced boundary algebra is
\[
 A_\partial=\operatorname{im}(A\to B/tB)
   =k[\mathbf a,w,a_1z,\ldots,a_{d-1}z,z^2,z^3].
\]
Its normalization is $R_0$, with conductor
$(a_1,\ldots,a_{d-1},z^2)R_0$ and quotient $R_0/A_\partial\simeq k[w]z$.
It is normal off $\mathbf a=z=0$ and has depth two at every closed point
of that line.  Nevertheless
\[
 (A:B)=t^2B,\qquad A^{\rm sn}=S_0=R_0+tB,\qquad S_0/A\simeq R_0(-d).
\]
The last formula is of graded $A$-modules, with action through
$A\to A_\partial\subset R_0$; it is not invertible over $A_\partial$
along the line.  The local type of $A$ at the total-zero point is $d+1$.
Moreover $\mathscr A=\ker\mathscr D_\theta$ and
$\mathscr A/\pi\mathscr A\simeq A\subset B$ for this whole layer.
\end{theorem}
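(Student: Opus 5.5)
The plan is to run the finite certificate of Proposition~\ref{v8r10:mixed:certificate} with $\ell=d+1$. That proposition already supplies $A\subset M$, the identity $\Hilb_M=\mathcal H_{d+1}$, the conductor argument and the saturated specialization, once explicit $b_j,a_i$ are exhibited in $A$.

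\emph{Identifying the jet functional.} With $h=x^d+a_1x^{d-1}+\cdots+c$, the only extra term of $\mathcal V$ is $i=d$, where $\gamma_d=h_0=1$ and $h_d=c$. So $\mathcal V=(d-1)\partial_s+\partial_c$, which is the stated membership condition. Reducing $h(u)p(u)$ modulo $u^d-s$ and reading the coefficient of $u^{d-1}$ gives $\sum a_ip_{i-1}+(s+c)p_{d-1}$, i.e.\ the displayed $\tau_h$ with $z=s+c$. The coordinates $w=s-(d-1)c$ and $z=s+c$ satisfy $\mathcal Vw=0$ and $\mathcal Vz=d$, which explains the normalizations in the $U$'s.

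\emph{Output membership.} This is the step I expect to be the main obstacle. The first $d$ outputs recover $\mathbf a$ and $c$ only modulo $t$, because $Q_j=-P_j(h)$ plus $t$-corrections appearing at index $\ell=d+1$. I would redo the Laurent expansion of $\log f$ used in \eqref{v7:mixed:triangular-moments}, keeping the $t^2$ and $t^3$ terms, for the indices $d+1,\dots,3d$, which lie below $r=d^2$. From this I would express $Q_d,\dots,Q_{3d}$ in the variables $(\mathbf a,c,t,u)$. The aim is to show that $w$ and each $U_{d+i},U_{2d},U_{2d+i},U_{3d}$ are polynomials in these outputs, subtracting already recovered products as in the triangular inductions of Theorems~\ref{v7:mixed:affine} and~\ref{v8:mixed:critical}. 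Several checks are needed:
\begin{enumerate}[label=\textup{(\alph*)}]
\item each $U$ should have $\Phi(U)=0$; for example $\Phi(U_{d+i})=a_i d-d\,\tau_h(u^{i-1})=0$;
\item the recovery of $U_{2d}$ should use $d\ge3$, as in \eqref{v8:mixed:critical-square};
\item a small residual case must be handled separately: for $d=3$ the product $U_{d+2}U_{2d-1}$ involves $U_{5}U_{5}$, which is harmless.
\end{enumerate}
I would first test $d=3$ symbolically to calibrate the sign and scalar coefficients.

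\emph{Independence and the Hilbert series.} Modulo $t$, the parameters become $\mathbf a,w,a_1z,z^2$. They are algebraically independent because, after specializing $\mathbf a$, the elements $w$ and $z^2$ are independent. For freeness, compute the root weights: $\deg a_i=i$, $\deg w=d$, $\deg U_{d+1}=d+1$, $\deg U_{2d}=2d$. The basis weights are $0$, $d+i$ for $2\le i<d$, $2d+i$ for $1\le i<d$, $3d$, and $2d+1$ for the product. Then verify the rational identity with $\mathcal H_{d+1}$ directly. Linear independence over $R$ would follow from the Hilbert count once generation is established, so it is enough to show $\bigoplus Ra_i\to M$ is injective. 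I would do this via the $t$-adic filtration: the leading parts give a determinant that is a power of $t$ times a unit in a suitable basis. If that becomes unwieldy, I would instead use the local type computation at the origin, checking that the Artin reduction has length $2d$. The certificate then gives $A=M$, Cohen--Macaulayness, $(A:B)=t^2B$, and $\mathscr A=\ker\mathscr D_\theta$.

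\emph{Boundary algebra, seminormalization and type.} Reducing the basis modulo $t$ gives $A_\partial=k[\mathbf a,w,a_iz,z^2,z^3]$. Its conductor in $R_0=k[\mathbf a,w,z]$ and the quotient $k[w]z$ follow by inspection. The depth-two claim on the line $\mathbf a=z=0$ follows from the depth lemma applied to $0\to A_\partial\to R_0\to k[w]z\to0$. The surjectivity of $\Phi$ gives $S_0/A\simeq R_0(-d)$. The subintegrality of $A\subset S_0$, together with the seminormality of $S_0$ (by the square-and-cube argument of Theorem~\ref{v7:mixed:affine}), gives $A^{\rm sn}=S_0$. Finally, reduce the free basis modulo the parameters of $R$ to get an Artin algebra of length $2d$. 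Its multiplication is read off from the explicit $U$'s, and I would then count a socle of dimension $d+1$ at the total-zero point.
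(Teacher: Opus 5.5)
Your architecture matches the paper's: verify the hypotheses of the certificate in Proposition~\ref{v8r10:mixed:certificate} at $\ell=d+1$, then read off the boundary, the seminormalization and the type. Your computations of $\mathcal V=\partial_c+(d-1)\partial_s$, of $\tau_h$, and of $\Phi(U_{d+i})=0$ are correct. The one step you actually supply for the parameter ring fails, however. Modulo $t$ the parameters become $\mathbf a,w,a_1z,z^2$. These are $d+2$ elements of the $(d+1)$-dimensional ring $R_0=k[\mathbf a,w,z]$, and they satisfy $(a_1z)^2=a_1^2z^2$. So no argument after reduction mod $t$ can give the algebraic independence the certificate requires.

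What is missing is finiteness of $B$ over $R$. Substitute $c=(u^d-w)/(d-1)$, $z=(du^d-w)/(d-1)$ and $t=(a_1z-U_{d+1})/d$ into $U_{2d}=z^2-2dtu^{d-1}$. This gives a monic equation of degree $2d$ for $u$ over $k[\mathbf a,w,U_{d+1},U_{2d}]$. Hence $B$ is free on $1,u,\ldots,u^{2d-1}$, and a dimension count makes $R$ polynomial. The same coordinates are what make independence of the $2d$ module generators checkable. A $t$-adic leading-form argument runs into the same dependence, and $t\notin R$. After specializing $\mathbf a=w=0$, however, one has $t=-U_{d+1}/d$, and the paper gets a determinant equal to a nonzero constant times $U_{d+1}^{2d}$. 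Your fallback via the length of the Artin reduction presupposes the structure of $A$ it is meant to establish. Also, $U_{d+2}U_{2d-1}$ has weight $3d+1$, not $2d+1$; with your weight list the identity with $\mathcal H_{d+1}$ fails.

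The output-membership step, which carries the theorem, is only announced, and its starting point is misdescribed. The variable $t$ first appears at index $d+1$. So $Q_1,\ldots,Q_{d-1}$ recover $\mathbf a$ exactly, while $Q_d=du^d+(d-1)P_d(h)$ recovers only $w$. The coordinate $c$ does not descend even modulo $t$: $c=(z-w)/d$ with $w\in A$ and $z\notin A_\partial$.

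The organizing device you lack is the reciprocal series. Put $H_*(v)=1+\sum_{i<d}a_iv^i$ and $L=H_*+cv^d+tv^{d+1}(1-uv)^{-1}$. The reciprocal of $f$ is $(1-uv)L(v)$, so with $\kappa_j=[v^j]\log L$ one has $Q_j=j\kappa_j$ for $d\nmid j$ and $Q_{kd}=ds^k-(d-1)kd\,\kappa_{kd}$. With $K(v)=(w+\sum_{i<d}U_{d+i}v^i)H_*(v)^{-1}$ there is the exact identity $L/H_*=1+d^{-1}(z-K)v^d+tu^{d-1}v^{2d}(1-uv)^{-1}H_*^{-1}$. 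Expanding the logarithm through $v^{3d}$ makes $U_{d+i}$, $U_{2d}$, $U_{2d+i}$, $U_{3d}$ enter $\kappa_{d+i}$, $Q_{2d}$, $\kappa_{2d+i}$, $Q_{3d}$ with coefficients $-1/d$, $d-1$, $(d-2)/d$, $(d-1)(d-2)/d$. Thus $d\ge3$ is needed for $U_{2d+i}$ and $U_{3d}$, not for $U_{2d}$ as you anticipated.

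Finally, for the type: every positive-weight basis element maps to zero in $B/R_+B$, where $t$ and $z^2$ vanish. So the Artin multiplication cannot be computed in $B$ after reducing. You must rewrite products in the basis of $A/(\mathbf a,w)A$, which still embeds in $B/(\mathbf a,w)B$, and only then kill $U_{d+1},U_{2d}$. The only surviving products are $[U_{d+i}][U_{d+j}]=[U_{d+2}U_{2d-1}]$ for $i+j=d+1$. This leaves the $d+1$ socle classes $[U_{2d+i}]$, $[U_{3d}]$ and $[U_{d+2}U_{2d-1}]$.
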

\begin{proof}
Abbreviate $X_i=U_{d+i}$, $Y_0=U_{2d}$, $Z_i=U_{2d+i}$ and $W_0=U_{3d}$.

\textit{Finite-output membership.}
For a formal variable $v$ define
\[
 \begin{gathered}
 H_*(v)=1+\sum_{i=1}^{d-1}a_iv^i,\qquad
 C(v)=H_*(v)^{-1}=\sum_{j\ge0}C_jv^j,\qquad
 \Lambda_j=[v^j]\log H_*(v),\\
 L(v)=H_*(v)+cv^d+\frac{tv^{d+1}}{1-uv},\qquad
 \kappa_j=[v^j]\log L(v).
 \end{gathered}
\]
These are coefficient identities of formal series over polynomial rings.
The reciprocal of $f$ is $(1-uv)L(v)$, and the reciprocal output is
$(1-u^dv^d)\prod_{a=1}^{d-1}L(\zeta^a v)$.  Thus
\[
 Q_j=j\kappa_j\ (d\nmid j),\qquad
 Q_{kd}=d(u^d)^k-(d-1)kd\kappa_{kd}.
\]
The first $d-1$ sums recover $\mathbf a$ triangularly, and
$Q_d=dw-d(d-1)\Lambda_d$ recovers $w$.
Put
\[
 K(v)=\frac{w+\sum_{i=1}^{d-1}X_iv^i}{H_*(v)},\qquad
 A_*(v)=\frac{z-K(v)}d,\qquad
 B_*(v)=\frac{tu^{d-1}}{(1-uv)H_*(v)}.
\]
The exact identity $L/H_*=1+A_*v^d+B_*v^{2d}$ gives
\[
 \log L=\log H_*+A_*v^d+(B_*-A_*^2/2)v^{2d}
                    +(-A_*B_*+A_*^3/3)v^{3d}\pmod{v^{3d+1}}.
\]
For $1\le i<d$, $\kappa_{d+i}=\Lambda_{d+i}-K_i/d$.
Since $K_i=wC_i+\sum_{j=1}^iX_jC_{i-j}$, these equations recover
$X_1,\ldots,X_{d-1}$ successively.  At the next multiple of $d$,
\[
 Q_{2d}=w^2+(d-1)Y_0+2(d-1)K_d-2d(d-1)\Lambda_{2d},
\]
so $Y_0\in A$.

To verify the remaining recoveries, put $s=u^d$, $T=tu^{d-1}$ and
$S_i=\sum_{j=1}^iZ_jC_{i-j}$.  Coefficient extraction gives
\[
 (B_*)_i=T\sum_{j=0}^iu^jC_{i-j},\qquad
 K_i=wC_i+\sum_{j=1}^iX_jC_{i-j},\qquad
 S_i=tz\sum_{j=0}^{i-1}u^jC_{i-1-j}+TC_i.
\]
Substitute $ds=w+(d-1)z$, $2dT=z^2-Y_0$ and
$\sum_{j=1}^ia_jC_{i-j}=-C_i$.  For $1\le i<d$ these identities yield
\[
 (B_*)_i+\frac{zK_i}{d^2}
 =\frac{d-2}{d}S_i-\frac{Y_0C_i+w\sum_{j=1}^iX_jC_{i-j}}{d^2}.
\]
Since $[v^i]A_*^2=((K^2)_i-2zK_i)/d^2$, the coefficient of
$v^{2d+i}$ in the logarithm is
\[
 \begin{split}
 \kappa_{2d+i}={}&\Lambda_{2d+i}-\frac{K_{d+i}}d
   +\frac{d-2}{d}\sum_{j=1}^i Z_jC_{i-j}\\
 &-\frac{Y_0C_i+w\sum_{j=1}^iX_jC_{i-j}+(K^2)_i/2}{d^2}.
 \end{split}
\]
The nonzero coefficient $(d-2)/d$ recovers all $Z_i$ successively.
To handle weight $3d$, let
\[
 T_d=-\frac{C_dY_0}{d^2}+\sum_{i=1}^{d-1}C_{d-i}
             \left(\frac{d-2}{d}Z_i-\frac{wX_i}{d^2}\right),\quad
 L_0=\Lambda_{3d}-\frac{K_{2d}}d-\frac{(K^2)_d}{2d^2}+T_d.
\]
The same convolution, now at index $d$, gives
$(B_*)_d+zK_d/d^2=sT+T_d$.  Since $A_*(0)=c$ and $B_*(0)=T$,
the coefficient at $v^{3d}$ is therefore
\[
 \kappa_{3d}=L_0+(s-c)T+c^3/3.
\]
Substituting this into $Q_{3d}=ds^3-3d(d-1)\kappa_{3d}$ and using
$w=s-(d-1)c$, $z=s+c$, $Y_0=z^2-2dT$ gives
\[
 Q_{3d}=\frac{w^3+3(d-1)wY_0+(d-1)(d-2)W_0}{d}
                    -3d(d-1)L_0.
\]
All terms except $W_0$ have already been recovered, and its coefficient
is nonzero.  These recoveries use only $Q_j$ with $j\le3d\le d^2$.
Thus every proposed parameter and basis element belongs to the actual
finite output algebra; in particular, the displayed product basis element
does as well.  No membership is inferred merely from the jet condition.

\textit{Polynomial parameters.}
Use independent symbols $\mathbf a,w,\alpha,\beta$ for the proposed
parameters.  Eliminating
\[
 c=\frac{u^d-w}{d-1},\qquad
 z=\frac{du^d-w}{d-1},\qquad t=\frac{a_1z-\alpha}{d}
\]
from $\beta=z^2-2dtu^{d-1}$ gives the monic equation
\[
 \begin{split}
 0={}&u^{2d}-\frac{2(d-1)}d a_1u^{2d-1}-\frac{2w}d u^d\\
 &+\frac{2(d-1)}{d^2}\bigl(a_1w+(d-1)\alpha\bigr)u^{d-1}
       +\frac{w^2-(d-1)^2\beta}{d^2}.
 \end{split}
\]
The map from this monic quotient to $B$ sends
$\alpha$ to $X_1$ and $\beta$ to $Y_0$.
Its inverse sends $c$ and $t$ to the displayed expressions; the monic
relation is exactly $\beta=z^2-2dtu^{d-1}$ after substitution.
Thus the quotient is $B$, so $R$ embeds as a polynomial ring and $B$
is free over it on $1,u,\ldots,u^{2d-1}$.

\textit{Module independence and the Hilbert identity.}
For independence of the proposed $A$-basis, specialize its coefficient
matrix at $\mathbf a=w=0$, retaining $\alpha,\beta$.  Its columns are
\[
 1,\quad \alpha u^{i-1}\ (2\le i<d),\quad
 -\frac{\alpha}{d-1}u^{d+i-1}\ (1\le i<d),\quad
 \frac d{d-1}(\alpha u^{2d-1}+\beta u^d),\quad
 \alpha^2u^{d-1}.
\]
In the displayed order their determinant is
$-d\alpha^{2d}/(d-1)^d\ne0$.  The universal determinant is consequently
nonzero, proving independence over $R$, not independence in every
special fiber of $B$.
The basis weights are zero and all integers from $d+2$ through $3d+1$
except $2d$.  Their numerator satisfies
\[
 1+\sum_{j=d+2}^{3d+1}T^j-T^{2d}
       =\frac{(1-T+T^{d+2})(1-T^{2d})}{1-T}.
\]
Together with the parameter weights $1,\ldots,d+1,2d$, this verifies
the finite certificate of Proposition~\ref{v8r10:mixed:certificate}.
It proves $A=\ker\Phi$, the free basis, Cohen--Macaulayness, the conductor,
and saturated specialization.  Here
$\mathcal V=\partial_c+(d-1)\partial_s=d\partial_z$.

\textit{The actual reduced boundary.}\label{r23:mixed:boundary-algebra}
The coefficient-functional image is $J_\tau=(\mathbf a,z)R_0$, so a zero-order value $H$
occurs exactly when $\partial_zH\in J_\tau$.  Expansion in $z$ omits
precisely $k[w]z$, giving $A_\partial$ and $R_0/A_\partial=k[w]z$.
Its conductor is $J_\partial=(\mathbf a,z^2)R_0$, not $J_\tau$.
Adjoining $z$, whose square and cube are in $A_\partial$, gives its
subintegral normalization $R_0$.  At every closed point of the line,
the normalization module has depth $d+1$ and its quotient has depth one;
hence $A_\partial$ has depth two.

\textit{Seminormalization and its module action.}
The square-and-cube argument of Theorem~\ref{v7:mixed:affine} proves
$S_0$ seminormal, using
$k[\mathbf a,c,u]\cap k(\mathbf a,c,u^d)=R_0$ modulo $t$.
For subintegrality, note that $t^2B\subset A$ gives
\[
 A[1/t^2]=S_0[1/t^2]=B[1/t^2],\qquad
 (A/t^2A)_{\rm red}=A_\partial,\quad
 (S_0/t^2S_0)_{\rm red}=R_0.
\]
Indeed, if $b=tg\in A\cap tB$, then
$b^4=t^2(t^2g^4)\in t^2A$; the same argument applies
to $S_0$.  Since $A=S_0$ off $V(t^2)$ and their reduced map on $V(t^2)$ is the
subintegral extension $A_\partial\subset R_0$, the finite extension
$A\subset S_0$ is subintegral (the prime and residue-field conditions are
checked on these two pieces).  The surjective relative derivation $\Phi$ gives
$S_0/A\simeq R_0(-d)$ as $A$-modules, acting through
$A\to A_\partial\subset R_0$.
At a closed point of the line,
$R_0/\mathfrak m_{A_\partial}R_0\simeq k[z]/(z^2)$, so this module
requires two generators and is not invertible.

\textit{The Artin quotient at the total-zero point.}
First form $A_0=A/(\mathbf a,w)A$, which is free over
$k[\alpha,\beta]$ on the displayed basis.  Its map into
$B_0=B/(\mathbf a,w)B$ is injective by the specialized nonzero
determinant above.  Thus polynomial multiplication in $B_0$ may be
rewritten uniquely in the $A_0$-basis \emph{before} setting
$\alpha=\beta=0$ in $A_0$.
For $2\le i,j<d$, the columns $X_i=\alpha u^{i-1}$ give the exact
identities in $A_0$
\[
 X_iX_j=\alpha X_{i+j-1}\quad(i+j\le d),\qquad
 X_iX_j=X_2X_{d-1}\quad(i+j=d+1).
\]
The largest weight of an Artin basis class is $3d+1$.
All products involving $Z_i$, $W_0$ or $X_2X_{d-1}$, and the remaining
$X_iX_j$ with $i+j>d+1$, have larger weight and vanish.
Consequently the only nonzero products of positive-weight basis classes are
\[
 [U_{d+i}][U_{d+j}]=[U_{d+2}U_{2d-1}]
       \quad(2\le i,j<d,\ i+j=d+1).
\]
Their pairing on the $d-2$ classes $[X_i]$ is nonsingular anti-diagonal.
The socle consists of the $d-1$ classes $[Z_i]$, $[W_0]$, and
$[X_2X_{d-1}]$, so its dimension is $d+1$.
The Artin quotient is $A_0/(\alpha,\beta)A_0$; the embedding into $B_0$
was used only before this final quotient.
\end{proof}

\paragraph{Minimal output generators.}
Under the hypotheses of Theorem~\ref{v8r9:mixed:first-excluded}, let
\(\mathfrak m=A_+=\bigoplus_{n>0}A_n\) be the homogeneous maximal ideal of
the total-zero point.  Then
\[
 \Hilb_{\mathfrak m/\mathfrak m^2}(T)=T+T^2+\cdots+T^{3d}.
\]
Consequently the embedding dimension at that point is $3d$,
$Q_1,\ldots,Q_{3d}$ form a minimum-cardinality homogeneous $k$-algebra
generating set, and every $k$-algebra generating set of $A$ has at least
$3d$ elements.  In particular, the least integer $M$ for which
$A=k[Q_1,\ldots,Q_M]$ is $M=3d$.  The finite free extension $R\subset A$ gives
$\dim A_{\mathfrak m}=\dim R_{R_+}=d+2$, so the embedding codimension at
the total-zero point is $2d-2$.

Indeed, the recoveries in the proof express every polynomial parameter and
every element of the displayed free $R$-basis in
$Q_1,\ldots,Q_{3d}$, so these outputs generate $A$.  Write
\[
 R_+=(\mathbf a,w,X_1,Y_0)R,\qquad I=R_+A,\qquad C=A/I,
 \qquad P=X_2X_{d-1}.
\]
The Artin multiplication table in the proof gives
\[
 C_+^2=k\cdot[P].
\]
Hence the classes of
\[
 X_2,\ldots,X_{d-1},\quad Z_1,\ldots,Z_{d-1},\quad W_0
\]
form a basis of $C_+/C_+^2$, of dimension
$(d-2)+(d-1)+1=2d-2$.

It remains to see that the $d+2$ polynomial parameters of $R$ remain
independent in $\mathfrak m/\mathfrak m^2$.  Let
$\rho:A\to R$ be the $R$-linear projection onto the scalar summand in the
free decomposition of Theorem~\ref{v8r9:mixed:first-excluded}.  Every
non-scalar basis element has weight at least $d+2$.  Therefore, in total
weight $n\le 2d$, a product of two positive-weight elements can contribute
to the scalar summand only through the product of their scalar parts; thus
\[
 \rho\bigl((\mathfrak m^2)_n\bigr)\subset (R_+^2)_n,
 \qquad n\le 2d.
\]
The parameter weights are $1,\ldots,d+1,2d$, and the corresponding
variables are not in $R_+^2$.  Their classes are consequently independent
in $\mathfrak m/\mathfrak m^2$.

The quotient map $A\to C$ now gives an exact sequence
\[
 0\longrightarrow (I+\mathfrak m^2)/\mathfrak m^2
 \longrightarrow \mathfrak m/\mathfrak m^2
 \longrightarrow C_+/C_+^2\longrightarrow0.
\]
Modulo $\mathfrak m^2$, the left term is spanned by the parameter classes,
so it has dimension $d+2$.  Their weights, together with the weights
$d+2,\ldots,2d-1$, $2d+1,\ldots,3d-1$, and $3d$ of the displayed basis of
$C_+/C_+^2$, are exactly $1,\ldots,3d$.  This proves the Hilbert series.
Since $Q_1,\ldots,Q_{3d}$ generate $A$, their cotangent classes form a
basis.  Finally, after subtracting constant values at the total-zero point,
the elements of any $k$-algebra generating set must span
$\mathfrak m/\mathfrak m^2$, which proves the asserted lower bound and the
sharp consecutive cutoff.

For $d=3$, the degree-nine model has last basis element $U_5^2$ and
total-zero type four.  No type formula at other points, or finite-output
generation for $\ell\ge d+2$, follows from this certificate.

The abstract boundary model is a classical conductor fiber product:
with $J_\partial=(\mathbf a,z^2)R_0$,
\[
 A_\partial=R_0\mathbin{\times}_{k[w,z]/(z^2)}k[w].
\]
This is the preimage of $k[w]$ in $R_0/J_\partial$
\cite[Lemma~1.3]{Ferrand}.  The calculation above identifies this
classical fiber product with the restriction of the actual finite-output
algebra, rather than just a candidate boundary.

\subsection{Projective descent over the first nonnormal boundary}

Globalizing Theorem~\ref{v8r9:mixed:first-excluded} requires three
boundary objects: $C$ in the normalization of $Y$, the boundary
normalization $\overline D$, and its reduced image $D$.

\begin{theorem}[The projective first nonnormal boundary]
\label{v8r11:mixed:projective-first}
Let $d\ge3$, $\ell=d+1$, and let $\nu:N=\PP^1\times\PP^{d+1}\to Y$
be the singleton--complement normalization.  Put
$C=\{f(u)=0\}\simeq\PP^1\times\PP^d$ and $D=\nu(C)_{\rm red}$.
There is a factorization
\[
 C\xrightarrow{\rho}\overline D=\PP^1\times\PP^d
       \xrightarrow{\beta}D\xrightarrow{i}Y,
 \qquad \rho(u,h)=(u^d,h),\qquad j=i\beta.
\]
Here $\beta$ is the finite subintegral normalization.  If $F_s$ denotes
the binary binomial represented by $s\in\PP^1$, set
\[
 Z=\{(s,h):[h]=[F_s]\}\subset\overline D,\qquad
 \Sigma=\beta(Z),\qquad \iota:\Sigma\hookrightarrow D.
\]
Then $Z\simeq\Sigma\simeq\PP^1$, the map to $Y$ is $[F_s]\mapsto[F_s^d]$,
and $\beta$ is an isomorphism off $\Sigma$.  The boundary has depth two
at each closed point of $\Sigma$, and
\[
 0\longrightarrow\mathcal O_D\longrightarrow\beta_*\mathcal O_{\overline D}
   \longrightarrow\iota_*\mathcal O_\Sigma(-2)\longrightarrow0.
\]
In contrast, $Y$ is smooth off $D$ and locally Cohen--Macaulay everywhere.
Its normalization conductor is
\[
 \mathfrak c\mathcal O_N=\mathcal I_C^2
             =\mathcal O_N(-2d-2,-2).
\]
Its seminormalization algebra and defect layers are
\[
 \mathcal S=\nu_*\mathcal O_N
       \mathop{\times}_{j_*\rho_*\mathcal O_C}j_*\mathcal O_{\overline D},
 \qquad
 \mathcal Q_{\rm glue}=j_*\mathcal O_{\overline D}(-1,0)^{\oplus(d-1)},
 \qquad
 \mathcal Q_{\rm inf}=j_*\mathcal L,
 \quad \mathcal L=\mathcal O_{\overline D}(-2,0).
\]
The extension \eqref{v7:descent:extension} does not split.  The rank-one
$\mathcal O_D$-module $\beta_*\mathcal L$, corresponding to
$\mathcal Q_{\rm inf}=i_*\beta_*\mathcal L$, is not invertible along
$\Sigma$ and requires two generators there.  More
precisely, with $L_C=\mathcal I_C/\mathcal I_C^2$, the canonical map
\[
 j_*\rho_*L_C\longrightarrow\mathcal Q_{\rm inf}
\]
has image $j_*(\mathcal I_Z\mathcal L)$ and cokernel
$(i\iota)_*\mathcal O_\Sigma(-2)$.

The actual polarization satisfies
$j^*\mathcal O_Y(1)=\mathcal O_{\overline D}(1,d-1)$ and
$\mathcal O_Y(1)|_\Sigma=\mathcal O_\Sigma(d)$.  With $m_n=(d-1)n$,
\[
 \begin{aligned}
 P_Y(n)&=(n+1)\binom{m_n+d+1}{d+1}-(dn-1)\binom{m_n+d}{d},\\
 P_{Y^{\rm sn}}(n)&=(n+1)\binom{m_n+d+1}{d+1}-(d-1)n\binom{m_n+d}{d},\\
 P_D(n)&=(n+1)\binom{m_n+d}{d}-(dn-1).
 \end{aligned}
\]
For $Y^{\rm sn}$ the line bundle is the pullback from $Y$, and for $D$
it is the restriction from $Y$.
\end{theorem}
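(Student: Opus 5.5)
The plan is to follow the proof of Theorem~\ref{v7:mixed:projective}, replacing the smooth-boundary input by Theorem~\ref{v8r9:mixed:first-excluded} and its boundary analysis.

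\emph{Boundary geometry.} On $C$ the singleton together with its coincident complement is a full mask, so $\nu|_C$ factors through the full-mask/$d$-complement profile image. Its product normalization \cite[Theorem~5.4]{LiProfileGeometry} is $\overline D=\PP^1\times\PP^d$, reached by $\rho(u,h)=(u^d,h)$. This gives the factorization $C\to\overline D\to D$, together with $j^*\mathcal O_Y(1)=\mathcal O_{\overline D}(1,d-1)$.

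Proposition~\ref{v8:mixed:boundary-threshold} says $D$ is not normal. The failure comes only from the overlap at weight $d$, and I will locate it with the companion half of Lemma~\ref{r35:mixed:completed-model}. The full-mask group is ambiguous exactly when the residual $h$ itself contains a complete $\mu_d$-orbit that can trade places with the full mask. Because $\deg h=d$, this forces $h=F_s$. So the non-isomorphism locus of $\beta$ is $Z$, and since $F_s$ determines $s$, $\beta$ is injective on $Z$. The image point is $[F_s\cdot F_s^{d-1}]=[F_s^d]$, which has degree $d$ in $s$, so $\mathcal O_Y(1)|_\Sigma=\mathcal O_\Sigma(d)$.

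Locally along $\Sigma$, the reduced boundary algebra $A_\partial$ of Theorem~\ref{v8r9:mixed:first-excluded} has normalization quotient $k[w]z$ and depth two. Its line $\mathbf a=z=0$ is the chart of $Z$. Gluing the monic and reciprocal charts then identifies $\beta_*\mathcal O_{\overline D}/\mathcal O_D$ with a line bundle on $\Sigma\simeq\PP^1$. I will find its degree $-2$ from the transition of the generator $z$, which is a weight-$d$ form on the $s$-line. As a check, the Euler characteristic count for $P_D$ should produce the correction $-(dn-1)$ from $(dn+1)-2$.

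\emph{Conductor, seminormalization, layers.} At each closed point, Lemma~\ref{r35:mixed:completed-model} with $\ell=d+1$ gives a product with regular factors. Its mixed factor is Theorem~\ref{v7:mixed:affine} for $m<d$, Theorem~\ref{v8:mixed:critical} for $m=d$, or Theorem~\ref{v8r9:mixed:first-excluded} for $m=d+1$. In every case the conductor is $t^2B$, and the algebra is Cohen--Macaulay and smooth off $C$. Annihilators commute with completion, so faithful flatness gives $\mathfrak c\mathcal O_N=\mathcal I_C^2$ and local Cohen--Macaulayness. The class of $\mathcal I_C$ is $(d+1,1)$.

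The seminormalization is locally $S_0=R_0+tB$, and $R_0$ is the coordinate ring of $\overline D$, not of $D$. This gives the fiber product $\mathcal S$ and $\mathcal Q_{\rm glue}=j_*(\rho_*\mathcal O_C/\mathcal O_{\overline D})$. The latter splits as $\mathcal O_{\overline D}(-1,0)^{\oplus(d-1)}$ by the $d$th-power map on $\PP^1$.

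For $\mathcal Q_{\rm inf}=\mathcal S/\mathcal O_Y$, I will use the relative derivation $\Phi$ with target $R_0$, together with the duality computation of Theorem~\ref{v7:mixed:projective}. That computation identifies $\mathcal O_{\overline D}(-2,0)$ globally, with the map $j_*\rho_*L_C\to\mathcal Q_{\rm inf}$ given by $\tau_h$. By the preceding remark, $\tau_h$ vanishes exactly on the fibers where $h=F_s$. Its image is therefore $\mathcal I_Z\mathcal L$; locally $J_\tau=(\mathbf a,z)$, which is reduced and cuts out $Z$. The cokernel is then $\mathcal L|_Z=\mathcal O_\Sigma(-2)$. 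Non-invertibility of $\beta_*\mathcal L$ with two generators is the local statement $R_0/\mathfrak m_{A_\partial}R_0\simeq k[z]/(z^2)$ already proved. For non-splitting, I will exhibit $\mathcal I_D\mathcal Q\ne0$ as before, using $t[u^{c}]$-type products, or $b_1[u^{d-1}]$ mapped through $\tau_h$.

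\emph{Hilbert polynomials.} I will twist the layers by $j^*\mathcal O_Y(1)$. The gluing layer contributes $(d-1)n\binom{m_n+d}{d}$ after the sum over $\mathcal O(n-1,\cdot)$. The infinitesimal layer contributes $(n-1)\binom{m_n+d}{d}$, giving the total $(dn-1)\binom{m_n+d}{d}$. For $P_D$, I will subtract the $\Sigma$ quotient $\chi(\mathcal O_\Sigma(dn-2))=dn-1$ from $\chi(\overline D)$.

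\textbf{Main obstacle.} I expect the hardest step to be the global identification of $\mathcal Q_{\rm inf}$ as $\beta_*\mathcal L$ rather than an $\mathcal O_D$-line bundle. This requires checking that the jet kernel from Theorem~\ref{v8r9:mixed:first-excluded}, with its modified $\mathcal V=d\partial_z$, glues across charts to the same duality twist. It also requires checking that the degeneracy of $\tau_h$ along $Z$ is reduced, so that the image is exactly $\mathcal I_Z\mathcal L$. I would verify the latter directly from the explicit formula for $\tau_h$ in Theorem~\ref{v8r9:mixed:first-excluded}.
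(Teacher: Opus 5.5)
Your outline follows the paper's architecture: Lemma~\ref{r35:mixed:completed-model} with the affine theorems for the conductor, smoothness off $D$ and Cohen--Macaulayness; $S_0=R_0+tB$ for $\mathcal S$ and $\mathcal Q_{\rm glue}$; and the same Euler-characteristic bookkeeping. However, the step you flag as the main obstacle is not closed by the tool you cite, and as written the argument there is circular.

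\emph{Identifying $\mathcal Q_{\rm inf}$.} In Theorem~\ref{v7:mixed:projective}, duality identified $\mathcal S/\mathcal O_Y$ only because the conormal map $i_*\rho_*L_C\to\mathcal S/\mathcal O_Y$ was surjective. Hence $\mathcal S/\mathcal O_Y\cong\rho_*L_C/\ker T$, with $T$ onto $\mathcal O_D(-2,0)$. At $\ell=d+1$ neither map is surjective. Equality of kernels then only identifies the \emph{image} of the conormal map with $\operatorname{im}T=\mathcal I_Z\mathcal L$. It says nothing about $\mathcal Q_{\rm inf}$ itself, which is an extension of $\beta_*\mathcal O_{\overline D}/\mathcal O_D$ by that image. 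So you cannot read off ``image $\mathcal I_Z\mathcal L$, cokernel $\mathcal L|_Z$'' from $\mathcal Q_{\rm inf}\cong j_*\mathcal L$.

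The missing idea is the normalization action. The chart isomorphisms $\Phi:S_0/A\to R_0$ are $A$-linear, with $A$ acting through $A_\partial$. Since $\operatorname{End}_{A_\partial}(R_0)=R_0$, they glue, together with the line bundle off $\Sigma$, to $\mathcal Q_{\rm inf}=j_*\mathcal M$ for some line bundle $\mathcal M$ on $\overline D$. Only then can $\mathcal M$ be determined, in one of two ways:
\begin{itemize}
\item The paper computes $[z']=-s^{-2}[z]$ using $\Phi_0=\partial_c+(d-1)\partial_s+\tau_h$. This requires the first-order $t$-variation of $c'$; without it one gets $\Phi_0(c')=-c^{-2}$ instead of $-s^{-2}$. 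It then extends across codimension two.
\item Alternatively, once $\mathcal M$ exists, the conormal map is $\mathcal O_{\overline D}$-linear and has the same saturated kernel as $T$. Its image is therefore a subsheaf of $\mathcal M$ isomorphic to $\mathcal I_Z\mathcal L$, with cokernel supported on $Z$, of codimension $d\ge3$. Reflexive hulls then give $\mathcal M\cong\mathcal L$. This is a legitimate version of your duality idea.
\end{itemize}

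\emph{No ambiguous full-mask group.} On $C$, each group total $d\epsilon+(d-1)k$ recovers $\epsilon$ and $k$, and $b_c=\epsilon+k-n_c$ recovers the allocation. So $\beta$ is bijective on geometric points. This radiciality is what makes $\beta$ subintegral, and it gives $\beta^{-1}(\Sigma)=Z$, which you need before ``injective on $Z$'' and ``isomorphism off $Z$'' combine. The defect along $Z$ is infinitesimal: $z\notin A_\partial$ while $z^2,z^3\in A_\partial$. Also, $Z\cong\Sigma$ requires $\beta|_Z$ to be the Veronese closed immersion, not merely injective.

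\emph{Non-splitting witnesses.} The witnesses you propose do not exist at $\ell=d+1$. Here $c=d-\ell<0$; $t\notin A$ because $\tau_h(1)=a_1$; and $tH_1\notin A$ because $\tau_h(u-a_1)=a_2-a_1^2$. Use instead an element of $A\cap tB$ from Theorem~\ref{v8r9:mixed:first-excluded}, e.g.\ $U_{2d+1}=t(z-a_1u^{d-1})$. Its product with $[u^{d-1}]$ has $\Phi$-value $z^2-sa_1a_{d-1}\ne0$.

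\emph{Your boundary-sequence computation is a valid alternative.} You work from $R_0/A_\partial=k[w]z$ and $z'=1/s+1/c=z/(sc)$, with $c=-s$ on $Z$, which gives $[z']=-s^{-2}[z]$. This is a correct and more elementary route to $\iota_*\mathcal O_\Sigma(-2)$ than the paper's. The paper obtains it as the cokernel $\mathcal L|_Z$ only after $\mathcal Q_{\rm inf}$ is known, whereas at $t=0$ no first-order correction arises.
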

\begin{proof}
\emph{The reduced boundary and its complete fibers.}
On $C$ the output consists of one full mask and $d$ complements.  Its
normalization and hyperplane pullback are $\overline D$ and
$\mathcal O_{\overline D}(1,d-1)$ by \cite[Theorem~5.4]{LiProfileGeometry}.
For each entire nonzero $\mu_d$-orbit, and separately at zero and
infinity, the output total $d\epsilon+(d-1)k$ uniquely recovers
$\epsilon\in\{0,1\}$ and $k$.  On a nonzero orbit the complement
multiplicities are recovered from $b_c=\epsilon+k-n_c$.
The reconstruction holds on every geometric fiber, so the finite surjection
$\beta$ is radicial.  In characteristic zero its residue extensions are
trivial; hence $\beta$ is subintegral (and it is birational by the
normalization input).  Its monic ring is the
actual $A_\partial$ of
Theorem~\ref{v8r9:mixed:first-excluded}; its only nonnormal locus is
$\mathbf a=z=0$, equivalently $h=F_s$.  The reciprocal chart gives the
same conclusion at infinity.

A point omitted by both charts has output roots at both zero and infinity.
Group the output by entire nonzero orbits and the two endpoints.  The group
containing the full mask then has $k<d$ complements; otherwise there would
be no root in any other group.  Its truncated full--complement profile is
smooth by \cite[Theorems~6.1--6.2]{LiProfileGeometry}; every other group is a smooth homogeneous
complement profile.  The companion-profile assertion of
Lemma~\ref{r35:mixed:completed-model} therefore makes $\beta$ an
isomorphism at the omitted points.  The local boundary calculation shows
that $D$ has depth two at every closed point of $\Sigma$.  On $Z$ the
output is $F_s^d$, whose coefficients give the degree-$d$ Veronese map of
the binomial line.  Thus $\beta|_Z$ is a closed immersion and identifies
$Z$ with $\Sigma$.

\emph{The ambient image and its conductor.}
Lemma~\ref{r35:mixed:completed-model} applies with $\ell=d+1$.
Insert Theorem~\ref{v7:mixed:affine}, \ref{v8:mixed:critical}, or
\ref{v8r9:mixed:first-excluded} for the mixed factor according as
$1\le m<d$, $m=d$, or $m=d+1$; $m=0$ is smooth.
There are $d+1-m$ additional regular parameters.  The annihilator and
faithfully flat descent argument in Theorem~\ref{v7:mixed:projective}
then gives the conductor, smoothness off $D$, and local
Cohen--Macaulayness.  The same models identify the displayed coherent
fiber-product algebra $\mathcal S$ with the seminormalization: on each
mixed factor it is $R_0+tB$, with the regular factors adjoined.

\emph{The normalization action on the infinitesimal module.}
\label{r23:mixed:normalization-action}
For $a\in A$ and $F\in S_0$, the relative Leibniz rule gives
$\Phi(aF)=a|_{t=0}\Phi(F)$, since $\Phi(a)=0$.
Thus $\mathcal I_D$ kills $\mathcal Q_{\rm inf}=\mathcal S/\mathcal O_Y$.
On the monic and reciprocal charts this is the torsion-free rank-one
normalization module $R_0$ over $A_\partial$, by
Theorem~\ref{v8r9:mixed:first-excluded}.  Off $\Sigma$ the same completed
models show that it is a line bundle on the smooth $D$.
Consequently its endomorphism algebra is $\beta_*\mathcal O_{\overline D}$:
on the two charts this is
$\operatorname{End}_{A_\partial}(R_0)=R_0$, since an endomorphism is
multiplication by a fraction and its value on $1$ belongs to $R_0$;
off $\Sigma$ the assertion is immediate.  These identifications glue in
the common function field.  Thus $\mathcal Q_{\rm inf}=j_*\mathcal M$
for a line bundle $\mathcal M$ on $\overline D$.  This constructs the
normalization action; it does not assume that $\mathcal S/\mathcal O_Y$
is an $\mathcal S$-module.

\emph{The reciprocal transition.}
We determine the line bundle, rather than just its affine rank.  Write
$\Phi_0=\partial_c+(d-1)\partial_s+\tau_h$ for the local quotient map,
with the derivative acting on the zero-order part.  On the overlap put
$u'=u^{-1}$ and $f'(x)=x^{d+1}f(x^{-1})/f(0)$, and divide
$f'=(x-u')h'+t'$.  At $t=0$ the transition and its first derivative are
\[
 s'=s^{-1},\quad a'_i=a_{d-i}/c,\quad c'=c^{-1},\qquad
 t'=-\frac{t}{u^{d+2}c}\pmod{t^2},\qquad
 \left.\partial_t h'_i\right|_{t=0}
       =\frac{a_{d-i}-cu^{-i}}{uc^2}\quad(1\le i\le d),
\]
where $a_0=1$ and $h'_d=c'$.  The already constructed normalization
action lets us compute the transition on one quotient generator:
$[z]/d=[s+c]/d$ generates $\mathcal M$ on the monic chart because
$\Phi_0(z)=d$, and $[z']/d=[s'+c']/d$ does so on the reciprocal chart.
Writing $z=s+c$, the first derivatives above give
\[
 \Phi_0(s')=-\frac{d-1}{s^2},\qquad
 \Phi_0(c')=-\frac1{c^2}
       +\frac{z}{sc^2}-\frac{z}{s^2c}=-\frac1{s^2}.
\]
Consequently $[z']=-s^{-2}[z]$ under that action.  Equivalently,
\[
 \Phi_0(P')=-s^{-2}\Phi_\infty(P')
\]
for every local function $P'$ of the primed seminormal algebra, with
the right side in unprimed boundary coordinates.  Omitting the
$t$-variation of $c'$ would incorrectly leave $-c^{-2}$.
The transition is that of $\mathcal O(-2,0)$, up to a constant unit sign.
The two opens require respectively a finite binomial parameter and
nonzero leading coefficient of $h$, or a nonzero binomial parameter
and nonzero constant coefficient of $h$.  They contain $Z$; their
complement is a union of codimension-two loci.  Both $\mathcal M$ and $\mathcal O(-2,0)$ are already
line bundles on the smooth $\overline D$, so their isomorphism extends
uniquely across that subset \cite[Tag~0EBJ]{Stacks}.  This proves
$\mathcal M=\mathcal L$, without assigning a line bundle to the nonnormal
$D$.

\emph{Why the conormal map is not surjective.}
The inclusion $\nu_*\mathcal I_C\subset\mathcal S$ gives the canonical
map $j_*\rho_*L_C\to\mathcal Q_{\rm inf}$, since
$\nu_*\mathcal I_C^2\subset\mathcal O_Y$.  On the monic chart it is
$tp\mapsto\tau_h(p)$.  Finite duality for the finite flat map $\rho$
\cite[Tags~0BUZ and 0BVE]{Stacks} identifies the corresponding global map
\[
 T:\rho_*L_C\longrightarrow\mathcal L
\]
with the evaluation section $h(u)$: indeed
\[
 L_C=\mathcal O_C(-d-2,-1),\quad
 \omega_\rho=\mathcal O_C(2d-2,0),\quad
 L_C^\vee\otimes\omega_\rho\otimes\rho^*\mathcal L
       =\mathcal O_C(d,1).
\]
The induced map is $\mathcal O_{\overline D}$-linear: scalar commutation
holds generically, and its target is torsion-free, so it holds everywhere.
The two maps agree on the dense monic chart and hence everywhere in the
torsion-free target.  Its image near $Z$ is
$(a_1,\ldots,a_{d-1},z)\mathcal L=\mathcal I_Z\mathcal L$, by the
explicit row for $\tau_h$.  Off $Z$ it is surjective: a nonzero degree-$d$
form vanishes on the entire length-$d$ binomial fiber exactly when it is
that binomial, even at a ramified endpoint.  This proves the claimed image.

\emph{The boundary's own defect.}
The cokernel identification is formal after the restriction maps and
actual conormal image have been determined.  With $J=tB\subset S_0$
and $J^2\subset A$,

\[
 \operatorname{coker}(J/J^2\longrightarrow S_0/A)
   =S_0/(A+J)\simeq R_0/A_\partial.
\]
Restriction $\mathcal S\to j_*\mathcal O_{\overline D}$ is surjective,
with kernel $\nu_*\mathcal I_C$, and the image of $\mathcal O_Y$ under
restriction is $i_*\mathcal O_D$.  Therefore the cokernel of the conormal
map is also $i_*(\beta_*\mathcal O_{\overline D}/\mathcal O_D)$.
It is $j_*(\mathcal L|_Z)= (i\iota)_*\mathcal O_\Sigma(-2)$,
proving the boundary sequence and its twist.
The monic and reciprocal charts cover $Z$, so the two-generator and
noninvertibility assertions follow from the $A_\partial$-module
calculation in Theorem~\ref{v8r9:mixed:first-excluded}.
Also $\nu_*\mathcal O_N/\mathcal S=
 j_*(\rho_*\mathcal O_C/\mathcal O_{\overline D})$;
the degree-$d$ power-map quotient is
$\mathcal O_{\overline D}(-1,0)^{\oplus(d-1)}$, giving $\mathcal Q_{\rm glue}$.

\emph{The actual conductor square and the nonsplit extension.}
The conductor square has source $2C$, not $C$.  For
$\mathcal K=\ker T$, its target fits into
\[
 0\longrightarrow j_*\mathcal K\longrightarrow
       \mathcal O_Y/\mathfrak c\longrightarrow i_*\mathcal O_D
       \longrightarrow0.
\]
On the monic chart its subalgebra in $B/t^2B$ consists of the pairs
$H+tp$ with $(\partial_c+(d-1)\partial_s)H+\tau_h(p)=0$.
The reciprocal and full-fiber models specify it on the other charts.
Together with
$\mathcal O_Y=\nu_*\mathcal O_N\times_{\nu_*\mathcal O_{2C}}
(\mathcal O_Y/\mathfrak c)$ this records the target subalgebra, not only
the source ideal; the fiber-product mechanism is classical
\cite[Lemma~1.3]{Ferrand}.  No equality $\mathfrak c=\mathcal I_D^2$
or splitting of this sequence is asserted.

Both layers are killed by $\mathcal I_D$, but their extension is not.
The finite-output generator
$U_{2d+1}=t(z-a_1u^{d-1})$ belongs to $\mathcal I_D$.
Its product with $[u^{d-1}]$ lies in the infinitesimal layer and maps under
$\Phi$ to
\[
 \tau_h\bigl((z-a_1u^{d-1})u^{d-1}\bigr)
       =z^2-sa_1a_{d-1}\ne0
\]
in the universal ring $R_0$.  Thus $\mathcal I_D\mathcal Q\ne0$,
excluding a splitting of \eqref{v7:descent:extension} without inverting
a resultant.  This witness proves global nonsplitting, not nonvanishing
of the displayed polynomial at every boundary point.

\emph{Actual polarization and Euler characteristics.}
The boundary output is $F_s$ times the complement output of $h$, so its
specified line bundle pulls back to $\mathcal O_{\overline D}(1,d-1)$.
On $Z$ this has degree $d$, consistently with $F_s^d$.
The two defect layers after twisting by $\mathcal O_Y(n)$ have Euler
characteristics
\[
 (d-1)n\binom{m_n+d}{d},\qquad
 (n-1)\binom{m_n+d}{d}.
\]
Subtracting their sum from
$\chi(N,\mathcal O_N(n,(d-1)n))$ proves $P_Y$; subtracting only the
first proves the stated polarized formula for $Y^{\rm sn}$.
The boundary sequence instead subtracts $\chi(\Sigma,\mathcal O_\Sigma(dn-2))
=dn-1$ from $\chi(\overline D,\mathcal O(n,(d-1)n))$, proving $P_D$.
The boundary quotient is subtracted separately for $P_D$, not again
for $P_Y$: it has already entered the identification of $\mathcal Q_{\rm inf}$.
All three formulas use the actual line bundles, not the root-weight series.
\end{proof}

The same formula for $P_Y$ continues through $\ell=d+1$; its
infinitesimal layer is then pushed forward from the boundary normalization.

The exact root-string and multi-wheel parameter dictionaries, together
with the integral free decomposition implied by the finite-output
certificate, are recorded in Appendix~\ref{r17:mixed:comparisons}.

\Needspace{14\baselineskip}
\section{Separations: what the descent data remember}\label{v7:sec:separations}

Over an algebraically closed characteristic-zero field, four pairs test
which data determine the image (Table~\ref{v7:table:separations}).
Polarizations are compared by their pullbacks to the normalization.

\subsection{Mixed degrees do not determine the normalization}
\label{r24:separation:degrees}

\begin{example}[Two quartic surfaces]\label{v6:cond:quartic}
Take $I=\{0,4,8\}$ and $r=4$.  Let $A=1100$ and $O=1010$ be the
necklaces with stabilizer orders one and two.  The profiles $AA$ and
$AO$ have the same expanded weights $(2,2)$, covering degree $16$,
and mixed degrees $(16,8,4)$, but their image normalizations are
$\PP^2$ and $\PP^1\times\PP^1$, respectively.

For $AA$, fix $i^2=-1$.  The roots
$\alpha,i\alpha,\beta,i\beta$ give the normalization map
\[
 [P:Q:R]\longmapsto
 [P^2:-(1+i)PQ:iQ^2:-i(1+i)QR:-R^2].
\]
After diagonal coordinate rescaling this is
$[P^2:PQ:Q^2:QR:R^2]$, whose image is the complete intersection
$z_0z_2=z_1^2$, $z_2z_4=z_3^2$ in $\PP^4$.
Indeed, on $z_2\ne0$ this complete intersection is an
affine plane, while its locus $z_2=0$ is supported on a
line.  It therefore has one generically reduced
component and no embedded components, so is integral
and equals the image.  On the monic chart the algebra is
\[
 k[p,pq,q^2]\simeq k[p,b,c]/(b^2-p^2c),
\]
with normalization $B=k[p,q]$.  Its conductor is $pB$, and the
normalization line $Q=0$ maps two-to-one onto the nonnormal line.

For $AO$, write the factor as
\[
 (X-\alpha Z)(X-i\alpha Z)(X^2-zZ^2).
\]
The first two monic coefficients are $-(1+i)\alpha$ and
$i\alpha^2-z$; hence they recover $\alpha,z$ polynomially and this
chart is smooth.  The projective normalization is
$\PP^1\times\PP^1$ with pullbacks
\[
 \eta=2H_1+H_2,\qquad \xi=4H_1+2H_2.
\]
For $AA$ the pullbacks are $2H,4H$, giving the same three mixed degrees;
the normalization Picard ranks differ.  The projective $AO$ image is
nevertheless nonnormal: assigning its two weight-two types to zero and
infinity in either order gives the same factor $X^2Z^2$.
\end{example}

\subsection{Polarized normalizations do not determine the image}
\label{r24:separation:pullbacks}

\begin{example}[Same normalization pullbacks, different boundary]
\label{v6:cond:degree-nine}
Take $I=\{0,6,12\}$ and $r=6$.  Compare two copies of
$S_A=\{0,1,2\}$ with two copies of $S_B=\{0,1,3\}$.  Both are
aperiodic necklaces of weight three.  Thus both profiles have
\[
 \widetilde Y=\PP^2,\quad(\eta,\xi)=(3H,6H),\quad
 D=36,\quad\delta=(36,18,9),
\]
and the same normalized relation cover.  In $B=k[p,q]$,
Corollary~\ref{v6:cond:halfmask} with $t=3$ gives
\[
 A_A=k[p,pq,pq^2,q^3]=k[q^3]+pB,\qquad(A_A:B)=pB.
\]
Its conductor line maps with degree three onto a line.  If $\zeta$ is
a primitive sixth root, then for $S_B$ one has
$\lambda_{B,1}=\zeta$ and $\lambda_{B,2}=1+\zeta$, both nonzero.
Corollary~\ref{v6:norm:homogeneous-threshold} therefore gives $Y_B\simeq\PP^2$
for the entire projective image.  Consequently
\[
 P_{Y_A}(n)=\binom{3n+2}{2}-2n,
 \qquad P_{Y_B}(n)=\binom{3n+2}{2}.
\]
The general hyperplane sections have arithmetic genera three and one.
Thus the common cover and pullbacks do not determine the image or its
Hilbert polynomial.
\end{example}

\Needspace{10\baselineskip}
\subsection{Point fibers do not determine conductor thickness}
\label{r24:separation:thickness}

\begin{theorem}[A pair with different conductor thickness]
\label{v6:cond:thick}
For $I=\{0,12,24\}$ and $r=8$, consider the profiles consisting of two
copies of
\[
 S_0=\{0,1,3,4\},\qquad S_1=\{0,2,3,5\}.
\]
Write $Y_0,Y_1$ for their reduced projective images.  They have the
same normalized relation cover, image normalization $\PP^2$,
pullbacks $(\eta,\xi)=(4H,12H)$, and mixed degrees $(144,48,16)$.
On the monic normalization set $B=k[p,q]=k[p,t]$, where $t=q-p^2$.
The image algebras and conductors are
\begin{align}
 A_0&=k[q^2]+pB,&(A_0:B)&=pB,\label{v6:cond:thin-ring}\\
 A_1&=k[p,t^2,p^3t,pt^3],&(A_1:B)&=(p^3,pt^2)B.
 \label{v6:cond:thick-ring}
\end{align}
The seminormalization of $Y_1$ is $Y_0$, compatibly with the fixed
normalization $\PP^2$.  Their Hilbert polynomials are
\begin{equation}\label{v6:cond:thick-hilbert}
 P_{Y_0}(n)=\binom{4n+2}{2}-2n,
 \qquad P_{Y_1}(n)=\binom{4n+2}{2}-2n-4.
\end{equation}
The first surface is seminormal and locally Cohen--Macaulay.  The
second fails both properties at exactly two points.
\end{theorem}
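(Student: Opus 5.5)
The plan is to treat the two masks separately through their Fourier spectra in $C_{12}$, and then compare them with Theorem~\ref{v7:descent:seminormal-classification}.

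\textbf{The shared numerical data.} Factor the mask polynomials:
\[
 M_{S_0}=(1+T)(1+T^3)=\Phi_2^2\Phi_6,\qquad
 M_{S_1}=(1+T^2)(1+T^3)=\Phi_4\Phi_2\Phi_6 .
\]
Hence $Z(S_0)=\{2,6,10\}$ and $Z(S_1)=\{2,3,6,9,10\}$. Both masks are aperiodic of weight four, so Corollary~\ref{v6:norm:homogeneous-threshold} gives the normalization $\PP^2$, the pullbacks $(4H,12H)$ and the mixed degrees $(144,48,16)$. The normalized relation map is $12$th powers in both cases.

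\textbf{The surface $Y_0$.} Every odd $\lambda_j(S_0)$ is nonzero, so Corollary~\ref{v6:cond:surface} applies. The nonzero even frequencies are exactly $j\equiv0,4,8\pmod{12}$, so $\Gamma=2\mathbb N$, $g=2$ and $\delta=0$. This gives \eqref{v6:cond:thin-ring}, the first formula in \eqref{v6:cond:thick-hilbert}, seminormality, and local Cohen--Macaulayness, the last by \eqref{v6:cond:surface-equivalences}.

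\textbf{The affine algebra $A_1$.} Here $\lambda_3=0$, so the surface corollary is unavailable and a direct finite-output computation is needed. By Proposition~\ref{v6:spec:monic-algebra}, $A_1=k[P_j:j\not\equiv2,3,6,9,10\ (12)]$, generated by $P_j$ with $j\le8$.
\begin{enumerate}
\item Since $P_1=p$, everything reduces modulo $k[p]$. I would write the $P_j$ in $(p,t)$ with $t=q-p^2$, using the Newton recursion $P_j=pP_{j-1}-qP_{j-2}$ up to sign conventions.
\item Grading by weight ($\deg p=1$, $\deg t=2$), I would extract from $P_4$, $P_5$, $P_7$, $P_8$ the generators $t^2$, $p^3t$, $pt^3$. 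The missing $P_2,P_3$ are exactly what kills $t$, $pt$ and $p^2t$.
\item The reverse inclusion, that no $P_j$ contains these missing monomials, I would check by a weight-by-weight count, or by a jet-type criterion: the coefficient of $t\cdot p^i$ for $i\le2$ must vanish.
\item The conductor $(p^3,pt^2)B$ then follows from the monomial structure. $B/A_1$ is spanned by $t,pt,p^2t,pt^2$ (only $pt^2$ is missing) and the odd powers of $t$. So the conductor is the monomial ideal generated by $p^3$ and $pt^2$.
\end{enumerate}

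\textbf{Seminormalization and the Hilbert polynomials.} I would check that $\mathcal K_2(S_0)=\mathcal K_2(S_1)$. At mass at most two, the relevant integer kernel vectors are $e_a+e_{a+6}-e_b-e_{b+6}$-type vectors, and these are annihilated by both circulants. Theorem~\ref{v7:descent:seminormal-classification} then gives $Y_1^{\mathrm{sn}}\simeq Y_0$, since $Y_0$ is seminormal. For $P_{Y_1}$, the quotient $A_0/A_1$ is spanned by $pt,p^2t,pt^2$ at the monic endpoint and has length two in the local ring there (I expect $[pt]$ and $[p^2t]$ up to lower terms). Cyclotomic conjugacy gives the same length at the reciprocal endpoint, so the total correction is $-4$. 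Away from these two points, the argument for omitted points in Theorem~\ref{v7:cond:semigroup} transfers, since truncated pieces of size at most one are smooth. At the two points, depth one follows from the finite-length quotient and the depth lemma, and non-seminormality follows from $A_1\ne A_0$ there.

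The main obstacle is the explicit identification of $A_1$, in particular proving the reverse inclusion and confirming the generator $pt^3$ rather than some other weight-$7$ element. I would handle this by computing $P_1,\dots,P_8$ explicitly and comparing Hilbert series weight by weight.
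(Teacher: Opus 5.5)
Your proof is correct. Your treatment of $Y_0$ via Corollary~\ref{v6:cond:surface}, the finite-output computation of $A_1$ from $P_1,P_4,P_5,P_7,P_8$, and the monomial conductor calculation match the paper. The genuine difference is how you obtain $Y_1^{\mathrm{sn}}=Y_0$ and the global comparison map.

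The paper stays on the monic chart. It shows $A_0=A_1[pt]$ with $(pt)^2,(pt)^3\in A_1$, so $A_1\subset A_0$ is a subintegral extension into a seminormal ring. It then builds $\varphi\colon Y_0\to Y_1$ from the reduced common graph, checking the reciprocal chart, the overlap, and the point $[0:1:0]$ by hand.

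You get $\varphi$ at once from Theorem~\ref{v7:descent:seminormal-classification}, via $\mathcal K_2(S_0)=\mathcal K_2(S_1)$. This is more conceptual, since it is exactly the statement that point fibers agree, and it removes the gluing checks. The chart computation is then used only to measure $\varphi_*\mathcal O_{Y_0}/\mathcal O_{Y_1}$. That is legitimate: $\varphi$ lifts the identity of $\PP^2$, so over the monic chart it is the inclusion $A_1\subset A_0\subset B$. In exchange, the paper's route displays the quotient as the cyclic module $R/(p^2,s)[pt]$ with its $A_1$-action, which it reuses for the nonsplit extension.

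Several statements need repair, though none changes the outcome.

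\begin{itemize}
\item The vectors $e_a+e_{a+6}-e_b-e_{b+6}$ lie in either kernel only when $b\equiv a+3\pmod 6$; otherwise frequencies $4,8$ survive.
\item Your kernel check must also exclude the extra directions of $\ker C_{S_1}$ at frequencies $3,9$. These first appear at mass three, e.g.\ $e_0-e_2+e_4-e_6+e_8-e_{10}$, which is exactly why $m=2$ works.
\item Since $pt^2=p\cdot t^2\in A_1$, the quotient $B/A_1$ has monomial basis $t^{2k+1}$ ($k\ge0$), $pt$, $p^2t$, and $A_0/A_1$ has basis $[pt],[p^2t]$.
\item Your proposed jet criterion would not characterize $A_1$, because it admits $t^3$. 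No criterion is needed: the reverse inclusion is just the observation that the five generators lie in $k[p,t^2,p^3t,pt^3]$.
\item The step $P_{Y_1}=P_{Y_0}-4$ requires $\chi(Y_0,\varphi^*\mathcal O_{Y_1}(n))=P_{Y_0}(n)$. The paper gets this from the conductor sequence of $Y_0$, which depends only on the pullback $\mathcal O(4)$ and on degree two along the boundary line.
\item Your depth-one conclusion at the endpoints uses that $A_0$ has depth two there, which you do have from local Cohen--Macaulayness of $Y_0$.
\end{itemize}
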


\begin{proof}
\emph{Common normalization data.}
The mask polynomials are $(1+T)(1+T^3)$ and $(1+T^2)(1+T^3)$.
Both masks are aperiodic: a stabilizer order divides four, whereas
translations by three and six preserve neither mask.
The product-normalization theorem \cite[Theorem~5.4]{LiProfileGeometry} gives the normalization and polarizations.
In the common root coordinates the normalized relation map is
$\{u,v\}\mapsto\{u^{12},v^{12}\}$, and the stated mixed degrees follow.
For $S_0$ all odd Fourier coefficients are nonzero and the supported
even indices generate $\Gamma=2\mathbb N$.  This proves
\eqref{v6:cond:thin-ring} by Corollary~\ref{v6:cond:surface}.

\emph{The actual finite-output algebra.}
For $S_1$, the supported indices between one and eight are
$1,4,5,7,8$.  Since $\lambda_3(S_1)=0$, we use
Proposition~\ref{v6:spec:monic-algebra} directly: the image algebra is
generated by these $P_j$.
Writing $p=u+v$, $q=uv$ and $t=q-p^2$, the recurrence
$P_j=pP_{j-1}-qP_{j-2}$, with $P_0=2$ and $P_1=p$, gives
\begin{align*}
 P_4&=-p^4+2t^2,&
 P_5&=p^5+5p^3t+5pt^2,\\
 P_7&=p^7-7p^3t^2-7pt^3,&
 P_8&=-p^8-8p^6t-16p^4t^2-8p^2t^3+2t^4.
\end{align*}
After recovering $p$, these identities recover $t^2$, then $p^3t$,
then $pt^3$, in that order.  The expression for $P_8$ belongs to the
same ring, proving both inclusions in the image-algebra equality.

\emph{Conductor, quotient action, and seminormalization.}
Put $s=t^2$ and
$R=k[p,s]$, $I=(p^3,ps)R$.  The even--odd decomposition in $t$ gives
\[
 B=R\oplus tR,\qquad A_1=R\oplus tI,\qquad
 A_0=R\oplus ptR.
\]
The last expression equals $k[q^2]+pB$, since
$t^2-q^2=-2p^2q+p^4\in pB$.  Equivalently, $p^at^b\in A_1$ for all
$a\ge0$ if $b$ is even, $a\ge3$ if $b=1$, and $a\ge1$ if $b\ge3$ is odd.
For $r+tj\in A_1$, multiplication by the $R$-basis element $t$ of
$B$ stays in $A_1$ exactly when $r\in I$.  Consequently
\[
 (A_1:B)=I\oplus tI=IB=(p^3,pt^2)B.
\]
Thus the conductor is the indicated ideal of $B$.

The finite infinitesimal quotient is the cyclic module
\[
 M=A_0/A_1=tpR/tI\simeq R/(p^2,s)\,[pt].
\]
Thus it has length two, with basis $[pt],[p^2t]$ and nonzero action
$p[pt]=[p^2t]$.  The odd generators $p^3t,pt^3$ act by zero on $M$.
Moreover $A_0=A_1[pt]$ and $(pt)^2,(pt)^3\in A_1$; adjoining such an
element is subintegral (on every residue field its square and cube determine
its unique value).  Since $A_0$ is seminormal by
Corollary~\ref{v6:cond:surface}, it is the seminormalization of $A_1$.

\emph{The projective comparison.}
Use normalization coordinates
$[P:Q:R_2]$ for $PX^2-QXZ+R_2Z^2$; the monic coordinates are
$p=Q/P$, $q=R_2/P$.  On the reciprocal chart,
\[
 p'=p/q,\qquad q'=q^{-1},\qquad t'=t/q^2.
\]
The reciprocal Fourier pattern gives $A_1'\subset A_0'$ with another
length-two quotient.  On the overlap, the last output coefficient is a
nonzero multiple of $q^4$ and annihilates $M$, so the algebras agree
after inverting it.  The displayed reciprocal change also identifies
the conductor ideals: $(p')$ and $((p')^3,p'(t')^2)$ become $(p)$ and
$(p^3,pt^2)$ because $q$ is a unit.  The sole normalization point
outside these charts is $[0:1:0]$.
Its output is $[X^4Z^4]$, with the unique input allocation of one zero
and one infinite root.  Hensel separation and the nonzero coefficients
$\lambda_1,\lambda_{-1}$ recover both local parameters, by
Theorem~\ref{v6:spec:completed-model}.  The normalization is an
isomorphism near this point on either image.
The reduced common graph has finite projections; the projection to
$Y_0$ is an isomorphism on both charts and at the remaining completion,
hence globally by faithful flatness.  The other projection gives a
finite subintegral morphism $\varphi:Y_0\to Y_1$ lifting the identity
of $\PP^2$, with quotient length four.
Both images have the same point fibers, including the generically
two-point fibers on the line $Q=0$; only the two endpoints carry $M$.

\emph{Actual polarization and local depth.}
Use the line bundle $\mathcal L=\varphi^*\mathcal O_{Y_1}(1)$ on $Y_0$.
Its normalization pullback is $\mathcal O_{\PP^2}(4)$.
The reduced boundary $D_0\subset Y_0$ is $\PP^1$, covered with degree
two by the normalization line $Q=0$, and therefore
$\deg(\mathcal L|_{D_0})=2$.
Twisting the conductor sequence for $Y_0$ by $\mathcal L^n$ gives
\[
 \chi(\PP^2,\mathcal O(4n))-\chi(Y_0,\mathcal L^n)
 =(4n+1)-(2n+1)=2n.
\]
The same computation applies separately to $\mathcal O_{Y_0}(1)$.
The length-four quotient of $\varphi$ is unchanged by twisting,
which proves \eqref{v6:cond:thick-hilbert}.
At the finite endpoint, with maximal ideal
$\mathfrak m=(p,s,p^3t,pt^3)$ of $A_1$, the ring $A_0$
is free over $R$ with basis $1,pt$, whereas $M$ is nonzero
of finite length.  Since $(p,s)A_1$ is $\mathfrak m$-primary,
the depth lemma over $R_{(p,s)}$ gives
$\operatorname{depth}(A_1)_{\mathfrak m}=1$.
The reciprocal endpoint is identical.  Elsewhere $\varphi$ is an
isomorphism, proving the stated local Cohen--Macaulay and seminormality
claims.

\emph{The nonsplit defect extension.}
As $R$-modules, the terms of \eqref{v7:descent:extension} are
\[
 0\longrightarrow tpR/tI\longrightarrow tR/tI
   \longrightarrow tR/tpR\longrightarrow0.
\]
The odd part $tI$ of $A_1$ kills all three terms.
Multiplication by $p^2$ kills both outer terms but sends $[t]$ in the
middle term to $[p^2t]\ne0$.  The extension is therefore nonsplit as
an $A_1$-module extension, and hence also nonsplit globally on $Y_1$.
\end{proof}

\subsection{A fixed source conductor does not determine the image}
\label{r24:separation:fixed-conductor}

The final pair also fixes the source conductor and varies the target
boundary subring.

\begin{theorem}[A separation with a fixed source conductor]
\label{v7:cond:same-ideal}
In $C_{24}$ take
\[
 T=\{0,1,2,3,4,6,7,8,9,10\},\qquad
 S=\{0,1,3,5,6,7,9,11,15,21\},
\]
and repeat either mask twice.  These profiles have $I=\{0,24,48\}$,
$r=20$, normalization $\PP^2$, pullback polarizations $(10H,24H)$,
and mixed degrees $(576,240,100)$.
They have identical normalization point fibers, the same
seminormalization, and the same conductor ideal in the fixed
normalization.  Nevertheless their images are different, even as
abstract varieties.  On $B=k[p,q]$ the rings are
\[
 A_T=k[q^2]+pB,\qquad A_S=k[q^4,q^6]+pB,
 \qquad(A_T:B)=(A_S:B)=pB.
\]
Their projective Hilbert polynomials are
\[
 P_{Y_T}(n)=50n^2+10n+1,\qquad
 P_{Y_S}(n)=50n^2+10n-1.
\]
\end{theorem}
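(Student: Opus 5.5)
The plan is to reduce everything to Corollary~\ref{v6:cond:surface}. That requires an exact determination of the Fourier supports of $T$ and $S$ in $C_{24}$, which I would get by identifying which cyclotomic factors $\Phi_q$, $q\mid24$, divide the mask polynomials $M_T$ and $M_S$. As in Corollary~\ref{v6:norm:homogeneous-threshold}, $\lambda_j=0$ exactly when $\Phi_{24/\gcd(24,j)}\mid M$.

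\emph{Step 1: normalization data.} First I check that both masks are aperiodic: a stabilizer order must divide $|S|=|T|=10$ and $24$, so the only candidate is translation by $12$, which preserves neither mask. With weight $10$, Corollary~\ref{v6:norm:homogeneous-threshold} then gives normalization $\PP^2$, polarizations $(10H,24H)$ and mixed degrees $(24^2,24\cdot10,10^2)=(576,240,100)$. The normalized relation cover is the same $24$th-power map for both.

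\emph{Step 2: the cyclotomic computation.} Odd $j$ correspond to $q\in\{8,24\}$, so condition \eqref{v6:cond:odd-condition} amounts to $\Phi_8,\Phi_{24}\nmid M_T,M_S$. The even indices $2a$ with $1\le a\le10$ are governed by $q\in\{2,3,4,6,12\}$. The target semigroups are:
\begin{itemize}
\item For $T$: $\Gamma=2\mathbb N$, so $\lambda_2=0$ and $\lambda_4\ne0$. Then $a=2$ generates $\Gamma$, and odd $a$ contribute nothing.
\item For $S$: $\Gamma=\langle4,6\rangle$. This requires $\lambda_{2a}=0$ for $a=1,2,3,5,7$, that is $\Phi_{12},\Phi_6,\Phi_4\mid M_S$, while $\lambda_8,\lambda_{12}\ne0$, that is $\Phi_3,\Phi_2\nmid M_S$.
\end{itemize}
I would verify these by reducing $M$ modulo each $\Phi_q$, which amounts to grouping residues of the mask modulo $q$ with the standard relations. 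For instance, $T=\{0,\dots,10\}\setminus\{5\}$ has $M_T=(1+T^6)(1+T+\dots+T^4)$, which makes its factors transparent. For $S$ I would reduce mod $T^4+1$, $T^4-T^2+1$, $T^2+1$, and so on directly. This bookkeeping is the step most prone to error, so I would double-check it by evaluating $\lambda_j$ at explicit roots of unity.

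\emph{Step 3: apply the surface corollary.} Both cases have $g=2$. We get $\Delta=\mathbb N$ with $\delta=0$ for $T$, and $\Delta=\langle2,3\rangle$ with $\delta=1$ for $S$. Corollary~\ref{v6:cond:surface} then gives the displayed algebras and the common conductor $pB$, since $g>1$. The Hilbert polynomials follow from \eqref{v6:cond:surface-hilbert}:
\[
 \tbinom{10n+2}{2}-5n-2\delta=50n^2+10n+1-2\delta .
\]
Theorem~\ref{v7:cond:semigroup} gives the seminormalization $k[q^2]+pB$ in both cases. Equality of seminormalizations compatible with $\PP^2$, combined with Theorem~\ref{v7:descent:seminormal-classification}, (iii)$\Rightarrow$(ii), yields identical point fibers.

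\emph{Step 4: distinguishing the images as abstract varieties.} A Hilbert polynomial depends on the polarization, so it cannot be used here. Instead I use intrinsic local properties, again from \eqref{v6:cond:surface-equivalences} and Theorem~\ref{v7:cond:semigroup}. The variety $Y_T$ is seminormal and everywhere locally Cohen--Macaulay, because $\Gamma=g\mathbb N$. The variety $Y_S$ is not seminormal, and it has depth one at the endpoint images of $\Lambda$, because $\Delta\ne\mathbb N$. These properties are invariant under abstract isomorphism, so $Y_T\not\simeq Y_S$.
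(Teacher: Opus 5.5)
Your proposal is correct and is essentially the paper's proof. The paper likewise factors $M_S=(1+z^6)(1+z+z^3+z^5+z^{15})$ with $1+z^6=\Phi_4\Phi_{12}$, checks that $\Phi_6$ is the only further cyclotomic divisor of $z^{24}-1$ dividing the second factor (confirming exactly your list of divisibility conditions, though your enumeration of vanishing indices should also include $a=9$, which $\Phi_4$ covers), applies Corollary~\ref{v6:cond:surface}, and separates the images intrinsically by seminormality and depth as in your Step~4.

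The one step you leave implicit is globalization. The formula $A^{\mathrm{sn}}=k[q^2]+pB$ is a monic-chart statement. The paper checks the reciprocal chart, the overlap, and the smooth point with one input at each endpoint, and from these builds a finite subintegral map $Y_T\to Y_S$. In your framework it is simpler to read off equal point fibers chartwise from $A_T^{\mathrm{sn}}=A_S^{\mathrm{sn}}$ and its reciprocal analogue, and then apply Theorem~\ref{v7:descent:seminormal-classification} in the direction (ii)$\Rightarrow$(iii) rather than (iii)$\Rightarrow$(ii).
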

\begin{proof}
\emph{Spectra and actual image rings.}
The mask polynomials factor as
\[
 M_T=(1+z^6)(1+z+z^2+z^3+z^4),\qquad
 M_S=(1+z^6)(1+z+z^3+z^5+z^{15}).
\]
Both masks have weight ten and trivial stabilizer: its order divides
$\gcd(10,24)=2$, and translation by twelve preserves neither mask.
The factor $1+z^6$ is $\Phi_4\Phi_{12}$.
The other factor of $M_T$ is $\Phi_5$, so gives no further divisor of
$z^{24}-1$.  For the other factor $P$ of $M_S$, direct remainders for
$q=2,3,4,6,8,12,24$ are respectively
\[
 -3,\quad2,\quad1,\quad0,\quad1,\quad
 3z^3+1,\quad z^5+z+1.
\]
Only the $q=6$ remainder is zero.  Hence every odd Fourier coefficient
is nonzero, and the supported even indices divided by two generate
$\Gamma_T=2\mathbb N$ and $\Gamma_S=\langle4,6\rangle$.
Corollary~\ref{v6:cond:surface} gives the rings, conductors, and Hilbert
polynomials.

\emph{Global descent and intrinsic inequivalence.}
Here $A_T=A_S[q^2]$ and $(q^2)^2,(q^2)^3\in A_S$, so this finite
extension is subintegral; Theorem~\ref{v7:cond:semigroup} identifies
$A_T=A_S^{\mathrm{sn}}$.  Its quotient is the one-dimensional space $k\cdot[q^2]$,
killed by $pB,q^4,q^6$.
The two reciprocal charts therefore each contribute length one.
The actual last output coefficient is a nonzero multiple of $q^{10}$.
On the overlap,
\[
 A_T[q^{-10}]=A_S[q^{-10}]
   =k[q^2,q^{-2}]+p\,k[p,q,q^{-1}].
\]
Indeed $q^2=q^{12}q^{-10}$ and $q^{-2}=q^8q^{-10}$ already belong
to the localized boundary subring.  The changes $p'=p/q$, $q'=q^{-1}$
therefore identify both actual rings and conductor ideals.
The remaining point of $\PP^2$ has one input at each endpoint and is
the unique preimage of a smooth point by
Theorem~\ref{v7:cond:semigroup}.  The common reduced graph is finite
over both images; its projection to $Y_T$ is an isomorphism on these
charts and at the remaining completion, hence globally.  It gives a
finite subintegral morphism $Y_T\to Y_S$, equating all point fibers.
The source conductor is the same line ideal.  The first image is
seminormal and locally Cohen--Macaulay; the second fails both
properties at its two endpoints.
These intrinsic properties rule out an abstract isomorphism.
\end{proof}

\begin{table}[H]
\centering\small
\caption{Which data fail to determine the image?  Each row is a separate pair;
all pullbacks are on its normalization.  In the last two rows, fibers and
source conductors are compared under the displayed normalization identification.}
\label{v7:table:separations}
\begin{tabular}{p{.15\linewidth}p{.39\linewidth}p{.38\linewidth}}
\toprule
Pair & Data held fixed & Distinguishing datum \\
\midrule
Example~\ref{v6:cond:quartic} & Weights, covering degree, and mixed degrees.
 & Normalization: $\PP^2$ versus $\PP^1\times\PP^1$.\\[3pt]
Example~\ref{v6:cond:degree-nine} & Normalized relation cover and both hyperplane pullbacks.
 & Nonnormal image versus the smooth $\PP^2$.\\[3pt]
Theorem~\ref{v6:cond:thick} & Normalized cover, both pullbacks, all point fibers, and the seminormalization.
 & Source ideals $(p)B$ versus $(p^3,pt^2)B$.\\[3pt]
Theorem~\ref{v7:cond:same-ideal} & Normalization, both pullbacks, point fibers, seminormalization, and conductor $(p)B$.
 & Boundary subrings $k[q^2]$ versus $k[q^4,q^6]$.\\
\bottomrule
\end{tabular}
\end{table}

The mixed examples distinguish a further two issues: pointwise relation
compatibility need not satisfy first-order descent
(Proposition~\ref{v7:mixed:relation-obstruction}), and the source conductor
$\mathcal I_C^2$ need not be the target ideal $\mathcal I_D^2$
(Example~\ref{v7:mixed:threefold}).  Together with the nonsplit extension
in Theorem~\ref{v6:cond:thick}, these show why the boundary subalgebra and
its module action must be retained.

\section{Scope, range boundaries, and outlook}\label{sec:outlook-b}
The results distinguish the normalization-side markings, the
point-fiber/seminormal level, the actual descended image algebra, and the
conductor/defect level.  Fourier zero spectra or Fourier lines compare actual
profile images under specified normalization or relation markings.  For
homogeneous profiles, bounded integer relations determine point fibers and
therefore seminormalization.  The explicit calculations in the homogeneous
and singleton--complement families go further and determine the actual
subalgebras, conductors, defect modules, and their actions.

The homogeneous semigroup calculation applies under the spectral hypothesis
of Theorem \ref{v7:cond:semigroup}; the two-element mask is computed
through the first singular interval \(L\le m<2L\) in Theorem
\ref{v7:cond:first-interval}.  Singleton--complement descent is proved
for \(d\ge3\) and \(1\le\ell\le d+1\).  The all-boundary gcd type formula is
proved for \(\ell\le d\); at \(\ell=d+1\) the finite-output generation,
projective conductor, and the type at the total-zero point are determined.
The lifting-kernel calculation applies for arbitrary \(\ell\), but finite
output generation beyond the stated range is not asserted.

Natural next targets are the homogeneous replacement problem for \(m\ge2L\),
singleton--complement generation and projective descent for \(\ell\ge d+2\),
and profiles with several singleton factors.  More conceptually, the
separating examples ask for a minimal functorial package of normalization-side
data that determines the actual image.  Point fibers, seminormalization, and
the source conductor are still too coarse, whereas the whole-fiber completed
image algebra is sufficient in the families treated here.
\appendix
\section{Squarefree mixed boundaries and Fourier line arrangements}
\label{v7:app:linear}

Over an algebraically closed field of characteristic zero, squarefree
nonzero outputs give exact linear configurations and transverse line
conductors.  Section~\ref{v7:sec:mixed-trace} handles repeated roots
and endpoints.

\subsection{The entire squarefree fiber}
\label{r27:linear:fiber}

\begin{proposition}[Exact linear configurations on a squarefree stratum]
\label{v7:linear:exact}
Let a mixed profile have nonempty aperiodic pairwise rotation-distinct
types, multiplicities $n_\nu$, and output degree $R$.
Suppose $G\in Y$ has $R$ distinct roots in $\Gm$.
Let $\mathcal F_G$ be all feasible allocations into the prescribed mask
blocks, modulo permutations within each type and satisfying every
global multiplicity $n_\nu$; these are the normalization preimages.
In relative output-root coordinates $z_1,\ldots,z_R$, each allocation
$\alpha$ defines a linear subspace $L_\alpha$ by equating the coordinates
belonging to the same block.  Then
\[
 \widehat{\mathcal O}_{Y,G}
   \simeq k[[z_1,\ldots,z_R]]/\bigcap_{\alpha\in\mathcal F_G}I(L_\alpha).
\]
This is an equality of completed image rings, not only a tangent-cone
calculation.
\end{proposition}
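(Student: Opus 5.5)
The plan is to specialize the whole-fiber model of Theorem~\ref{v6:spec:completed-model}, in its mixed form from the proof of Theorem~\ref{v6:spec:mixed-classification}, to squarefree outputs. There every cluster has multiplicity one, so the Hensel--Newton coordinates become just the individual output roots.

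\emph{Step 1: ambient coordinates.} Choose the finite chart containing $G$. Since its $R$ roots are distinct and lie in $\Gm$, Hensel factorization into linear factors identifies the completed ambient coefficient ring with $k[[z_1,\ldots,z_R]]$. Here $z_a$ is the relative deviation of the $a$th root, written $\text{root}_a=\rho_a(1+z_a)$. No endpoint groups occur, so zero and infinity play no role.

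\emph{Step 2: fibers and branch maps.} Each normalization preimage $x_\alpha$ is an allocation $\alpha$ of the roots into blocks $\zeta^{S_\nu}v$, one block for each input parameter $v$. Squarefreeness forces every input parameter to have multiplicity one and all blocks to be disjoint. The argument of Theorem~\ref{v6:spec:mixed-classification} shows that such allocations, subject to the global counts $n_\nu$, are exactly the fiber. At $x_\alpha$ the completed normalization ring is regular, with one coordinate $t_i$ per input parameter, $v_i=v_i^0(1+t_i)$. Since $\zeta^a v_i(1+t_i)$ has relative deviation $t_i$, the coefficient map sends $z_a\mapsto t_{i(a)}$, where $i(a)$ is the block of $\alpha$ containing $a$. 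This is exactly formula \eqref{v6:spec:exact-moments} at moment order one, and the higher moments are powers of the same deviations. The map is therefore surjective, and its kernel $I_\alpha$ is generated by the linear forms $z_a-z_b$ for $a,b$ in a common block; that is, $I_\alpha=I(L_\alpha)$.

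\emph{Step 3: conclusion.} Formula \eqref{v6:spec:kernel-intersection}, whose proof carries over verbatim with coupled type counts as in Theorem~\ref{v6:spec:mixed-classification}, now gives the displayed isomorphism as an equality of embedded completed rings.

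The main point to check is that allocations correspond bijectively to normalization points, including the quotient by permutations within each type. Distinct allocations modulo these permutations give distinct points of $\prod\sym^{n_\nu}\PP^1$. Conversely, the symmetrization is harmless: because the inputs are distinct, each point of $\sym^{n_\nu}$ is étale-locally the ordered product, so the completed local ring is exactly $k[[t_i]]$. I would verify this local identification first, since it is what licenses treating the $t_i$ as free coordinates in the kernel computation.
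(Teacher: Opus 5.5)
Your proposal is correct and follows the paper's own argument. Squarefreeness makes the ordered output roots \'etale ambient coordinates. Each branch is the closed immersion of $L_\alpha$ because $\zeta^a v(1+t)$ has relative deviation exactly $t$. The whole-fiber kernel intersection of Theorems~\ref{v6:spec:completed-model} and~\ref{v6:spec:mixed-classification}, with the global type counts retained, then gives the formula. Your additional checks spell out what the paper states in a single line: \'etaleness of $\sym^{n_\nu}\PP^1$ at reduced inputs, surjectivity of each branch map, and the allocation--preimage correspondence.
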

\begin{proof}
Squarefreeness makes ordered output roots \emph{\'etale} coordinates
and prevents coincident same-type inputs.  Under $u\mapsto u(1+t)$,
each block root is exactly $\zeta^au(1+t)$, so its relative coordinate
is $t$ and the branch is the closed immersion of $L_\alpha$.
The full-fiber completion of Theorems~\ref{v6:spec:completed-model}
and \ref{v6:spec:mixed-classification} intersects these kernels over
all feasible allocations, retaining every global type count.
\end{proof}

Proposition~\ref{v6:spec:immersive-conductor} applies to these immersive
branches.  Globally, use absolute root coordinates
$x_{\nu,i,a}=\zeta^a u_{\nu,i}$ to define $L$, and let
$U=\bigcup_{\sigma\in S_R}\sigma L$ be reduced.
Exactness of invariants in characteristic zero makes $k[U]^{S_R}$
the monic image algebra, generated by the restricted elementary
symmetric functions.  This invariant-arrangement presentation
\cite{BrooknerCorwinEtingofSam} retains the equations; at repeated roots
the \emph{\'etale} local model above no longer applies.

\subsection{Complementary masks and transverse conductors}
\label{r27:linear:transverse}

\begin{theorem}[Fourier sumsets and transverse conductors]
\label{v7:linear:sumsets}
Let $S$ be a nonempty proper aperiodic subset of $C_d$, $d\ge3$, and let
$T=C_d\setminus S$ not be rotation-equivalent to $S$.
Use $S,T$ once each.  At $G=X^d-v^dZ^d$ with $v\ne0$,
split off the common smooth factor along the binomial boundary and put
\[
 E=\{j\in C_d\setminus\{0\}:\lambda_{S,j}\ne0\}.
\]
The transverse completed ring is the completion at the origin of the
reduced union of the $d$ lines
\[
 z_a\longmapsto(\zeta^{aj}z_a)_{j\in E},\qquad a\in C_d.
\]
Write $R_E$ for its graded ring and $\widetilde R_E=\prod_{a\in C_d}k[z_a]$
for its normalization.  Let $hE$ be the sumset of exactly $h$ elements
of $E$, with $0E=\{0\}$, and let $c_E=\min\{h:hE=C_d\}$.
Then
\begin{align*}
 \dim_k(R_E)_h&=|hE|, &2\le c_E&\le d-|E|+1,\\
 \mathfrak c\widetilde R_E&=
       \prod_{a\in C_d}z_a^{c_E}k[z_a],&
 \delta(R_E)&=\sum_{h\ge0}(d-|hE|).
\end{align*}
\end{theorem}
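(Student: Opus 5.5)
First I reduce to the transverse model.  At $G=X^d-v^dZ^d$ the output has $d$ simple roots $\zeta^a v$, all on one nonzero orbit, and I enumerate the feasible allocations.  With one input of type $S$ at $\zeta^c v$ and one of type $T$ at $\zeta^{c'}v$, every output root has multiplicity one exactly when $S+c$ and $T+c'$ partition $C_d$.  Because $T=C_d\setminus S$, this means $T+c'=C_d\setminus(S+c)=T+c$, so $c'=c$ by aperiodicity of $T$.  The fiber therefore consists of the $d$ points indexed by $a\in C_d$.  Proposition~\ref{v7:linear:exact} now identifies the completed image ring with the reduced union of the $d$ planes $L_a$.  The plane $L_a$ has coordinates $t_S,t_T$, and the output root $\zeta^bv(1+\cdot)$ takes the value $t_S$ if $b\in S+a$ and $t_T$ otherwise.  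The common direction $t_S=t_T$ is the smooth binomial-boundary factor, and I split it off.  The transverse coordinate is then $z_a=t_S-t_T$.

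Next I pass to Fourier coordinates.  Apply the discrete Fourier transform to the output coordinates, $y_j=\sum_b\zeta^{-bj}x_b$ with $x_b$ the relative root coordinate.  After removing the $t_T$ part, the transform on branch $a$ equals $z_a\sum_{b\in S+a}\zeta^{-bj}=z_a\zeta^{-aj}\lambda_{S,-j}$.  The coordinate $j=0$ lies in the split-off direction, up to an invertible combination.  The coordinates with $\lambda_{S,j}=0$ vanish identically.  Rescaling the remaining $y_j$ and replacing $j$ by $-j$, which preserves $E$ by cyclotomic conjugacy, gives exactly the parametrization $z_a\mapsto(\zeta^{aj}z_a)_{j\in E}$.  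Since this is a linear change of the ambient coordinates, the completed ring is the completion of $R_E$.

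The Hilbert function is computed directly.  $R_E$ is the image of $k[y_j:j\in E]$ in $\prod_a k[z_a]$.  In degree $h$, a monomial $y_{j_1}\cdots y_{j_h}$ maps to the vector $(\zeta^{a(j_1+\cdots+j_h)})_a\,z^h$.  These vectors are characters of $C_d$ and are independent for distinct frequencies, so $\dim(R_E)_h=|hE|$.  The genus formula $\delta=\sum_h(d-|hE|)$ follows because $(\widetilde R_E)_h$ has dimension $d$.

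For the conductor, $(R_E)_h$ is all of $(\widetilde R_E)_h$ exactly when $hE=C_d$.  If $hE=C_d$ then $(h+1)E=C_d$ as well, since $(h+1)E\supset hE+e$.  Hence $\prod_a z_a^{c_E}k[z_a]\subset R_E$, and this is an ideal of $\widetilde R_E$.  Conversely, suppose an element of the conductor has a nonzero component of degree $h<c_E$ in some factor.  Multiplying by idempotents, which lie in $\widetilde R_E$, isolates a single-factor element $z_a^{h'}e_a$ with $h'<c_E$ inside $R_E$.  But $e_a z^{h'}$ spans all characters after translation, so it lies in $R_E$ only if $h'E=C_d$, a contradiction.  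This gives $\mathfrak c\widetilde R_E=\prod_a z_a^{c_E}k[z_a]$.

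The bounds on $c_E$ are as follows.  $E$ is nonempty, since otherwise the Fourier inversion \eqref{v6:spec:fourier-inversion} would make $S$ constant, which is impossible.  We have $1E=E\ne C_d$ because $0\notin E$, so $c_E\ge2$.  For the upper bound, $0\notin E$ and $E=-E$, so $0\in2E$, and then iterated sumsets grow by at least one element until they fill $C_d$.  I expect this upper bound to be the main obstacle.  I would first try a Kneser/Cauchy--Davenport-type argument that uses $0\in hE\subset(h+1)E$ after translation.  Failing that, I would try the growth $|(h+1)E|\ge|hE|+1$ unless $hE$ is a union of cosets of a subgroup $H$ stabilizing it.  In that case aperiodicity of $S$ forces $E$ to generate $C_d$, ruling out stalling before $C_d$.
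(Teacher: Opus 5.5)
Your reduction to the transverse model, the Fourier change of coordinates, the Hilbert function $\dim_k(R_E)_h=|hE|$, the conductor test via idempotents, and the $\delta$-count all follow the paper's proof. The gap is the upper bound $c_E\le d-|E|+1$. The finiteness of $c_E$, and hence the conductor formula, depends on it.

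The condition you plan to use is that $E$ generates $C_d$. This does follow from aperiodicity, but it does not rule out stalling. What is needed is that $E$ not lie in a single coset of a proper subgroup. For example, take $d=4$ and $S=\{0,1\}$. This $S$ is aperiodic and $E=\{1,3\}$ generates $C_4$. Yet $hE$ alternates between $\{1,3\}$ and $\{0,2\}$, so $c_E=\infty$. This $S$ is excluded from the theorem only because $T=S+2$. The hypothesis that $T$ is not a rotation of $S$ never enters your argument, which signals the omission. A Kneser-type argument needs the same coset condition. Also, your observation $0\in 2E$ only nests the sumsets in steps of two.

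The missing step has two parts. First, prove that the $d$ lines are pairwise distinct; you also need this for $\widetilde R_E=\prod_a k[z_a]$ to be the normalization. Suppose the directions for $a\ne a'$ were proportional. Both Fourier transforms vanish at $0$ and off $E$, so the centered indicators $1_{S+a}-|S|/d$ and $1_{S+a'}-|S|/d$ would be proportional. Each takes exactly two values. Hence either $S+a=S+a'$, or $|S|=d/2$ and $S+(a-a')=T$. The hypotheses exclude both.

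Second, convert distinctness into generation. Fix $j_0\in E$ and put $E_0=E-j_0$. If $E_0$ lay in a proper subgroup $H$, take a nonzero $x$ annihilating $H$. Then $\zeta^{xj}=\zeta^{xj_0}$ for all $j\in E$, which identifies the lines of $a$ and $a+x$. Hence $\langle E_0\rangle=C_d$.

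Since $0\in E_0$, the sets $hE_0=hE-hj_0$ are nested and $|hE_0|=|hE|$. An equality $hE_0+E_0=hE_0$ would make $hE_0$ invariant under $\langle E_0\rangle=C_d$. Thus $|hE|$ increases strictly from $|E|$ until it equals $d$, which gives $c_E\le d-|E|+1$.
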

\begin{proof}
\emph{The full fiber and the smooth factor.}
A preimage selects $S+a$ and $T+b$.  Complementarity gives $S+a=S+b$;
aperiodicity forces $a=b$, so there are exactly $d$ preimages.
Their planes in Proposition~\ref{v7:linear:exact} are spanned by
$\mathbf 1$ and $1_{S+a}$.  Splitting the common diagonal from all planes
gives an actual power-series factor, not merely a tangent direction.
Fourier transformation and nonzero diagonal rescaling of the remaining
coordinates give the stated lines.  For two different shifts, proportional
centered indicators force a period of $S$ or an equal-weight complementary
rotation.  The hypotheses exclude both, so the lines are distinct.

\emph{Hilbert function and saturation.}
Degree-$h$ monomials restrict to the independent Fourier characters
with frequencies in $hE$, giving $\dim_k(R_E)_h=|hE|$.
Choose $j_0\in E$ and set $E_0=E-j_0$.  Then $hE=hj_0+hE_0$;
only the translated sets $hE_0$ are necessarily nested.
The set $E_0$ generates $C_d$, or its nontrivial annihilator would
identify two line directions.  Since $0\in E_0$, growth cannot stop
before $C_d$: a stationary sumset is invariant under all its generating
translations.  Starting at $|E|$ proves $c_E\le d-|E|+1$;
$0\notin E$ gives $c_E\ge2$.

\emph{Conductor and quotient length.}
A nonzero homogeneous value supported on one branch has every Fourier
coefficient nonzero, so it belongs to the image exactly when $hE=C_d$.
This equality persists in every higher degree.  Multiplication by the
normalization idempotents tests each branch separately, proving the
conductor formula.  The degree-$h$ normalization quotient has dimension
$d-|hE|$, including $h=0$; summing gives $\delta(R_E)$.
\end{proof}

For $E=C_d\setminus\{0\}$, in particular for a singleton, the Hilbert
function is
$1,d-1,d,d,\ldots$, the conductor order is two, and $\delta=d$.
These classical elliptic $d$-fold points \cite[Definition~2.1]{Smyth}
(see also \cite[Section~1]{Stevens}) give the
squarefree transverse model for $\ell=1$ in Section~\ref{v7:sec:mixed-trace}.
For $d=6$, $S=\{0,1\}$ instead has $E=\{1,2,4,5\}$ and $2E=C_6$:
the same conductor order but $\delta=7$.
Neither computation determines the entire projective conductor of an
arbitrary complementary pair, including repeated roots or endpoints.

\section{Restriction algebras and integral specialization}

\label{r17:mixed:comparisons}

This appendix records the parameter dictionaries and integral consequences
of Section~\ref{v7:sec:mixed-trace}.  The finite-output generation and
projective descent proofs are independent of these comparisons.
Write $\mathcal O,\mathcal A,\mathcal B,\mathcal D_\theta,\mathcal K$
for the deformation objects denoted by
$\mathscr O,\mathscr A,\mathscr B,\mathscr D_\theta,\mathscr K$ above,
and put $K=k(\theta)$.

\subsection{Parameter dictionaries and the scope of the precedents}
\label{r17:mixed:dictionary}

Retain $q_\theta,Q_j(\theta)$ from
Section~\ref{r17:mixed:deformation}.  For arbitrary $\ell\ge1$,
$a_j(\theta)=(q_\theta^j-1)/(1-\theta^j)$ wherever the quotient is
defined.  In the convention of Etingof--Rains the parameters are
$(r,s,q,t)=(\ell,1,q_\theta,\theta)$, with scalar rescaling one.  The
result recorded there as Proposition~3.5(i), due to Sergeev--Veselov,
identifies the deformed power-sum algebra itself with the displayed
difference kernel whenever $q,t$ are not roots of unity and
$q^m\ne t^n$ for all $m,n\ge1$.  In our family the latter equality would
mean $\theta^{(d-1)m+n}=1$; hence the generic nonresonant kernel
description is a direct precedent (in particular at a transcendental
complex value of $\theta$).  It does not determine the saturated
$\mathcal O$-lattice at $\theta=\zeta$, its embedded special fiber, or the
projective conductor and defect modules computed below.
In the convention of Sergeev--Veselov
\cite[Section~5, equations~(11)--(12), arXiv version]{SergeevVeselovMR}, use
\[
 (n_{\rm SV},m_{\rm SV})=(1,\ell),\qquad
 x=u,\quad y_i=\theta^{-1}v_i,\qquad
 (q_{\rm SV},t_{\rm SV})=(\theta^{-(d-1)},\theta^{-1}).
\]
Their deformed power sum is exactly $Q_j(\theta)$.  The entire curve is
special, since $q_{\rm SV}=t_{\rm SV}^{d-1}$; their remark after
Corollary~5.7 gives the alphabet
\[
 (u;\ \theta^{-1}v_i,\theta^{-2}v_i,\ldots,
             \theta^{-(d-1)}v_i)_{1\le i\le\ell}.
\]
For $d=3,\ell=2$ its generic image is the five-variable double-shifted-diagonal
restriction of \cite[Section~2, equation~(1) and Theorem~2.2]{KasataniMiwaSergeevVeselov}.

There is an exact multi-wheel description over $K=k(\theta)$.
In \cite[Definition~6.1, arXiv version]{KasataniMultiwheel}, take
\[
 (n_{\rm K},k_{\rm K},r_{\rm K},m_{\rm K})
   =(1+(d-1)\ell,d-2,2,\ell),\qquad
 (t_{\rm K},q_{\rm K})=(\theta,\theta^{-(d-1)}).
\]
All wheel shifts vanish, and a string starting at $\theta^{-(d-1)}v_i$
is the displayed string in reverse order.  For the Laurent vanishing
ideal $I^{(d-2,2)}_\ell$, restriction of symmetric polynomials satisfies
\[
 \ker\operatorname{res}_\theta
   =I^{(d-2,2)}_\ell\cap K[X_1,\ldots,X_{n_{\rm K}}]^{\mathfrak S_{n_{\rm K}}},
 \qquad \operatorname{im}\operatorname{res}_\theta
   =K[Q_1(\theta),\ldots,Q_{n_{\rm K}}(\theta)].
\]
Symmetry removes the order of the strings, and their nonzero parameters
are dense, so contraction from Laurent to polynomial functions gives
the same vanishing kernel.  Newton's identities identify the image in
the fixed normalization.  The single-wheel precursor is
\cite[Theorem~2.4]{FJMMShifted}.
The quotient-basis statement in \cite[Theorem~6.5]{KasataniMultiwheel}
assumes $n_{\rm K}=k_{\rm K}+1$, not $n_{\rm K}=d^2$.
For composition series, \cite[Section~2.1.4]{EnomotoDAHA} imposes nonroot
parameters, while \cite[Section~5.2, Theorem~5.10]{EtingofStoica} removes
its half-integer exception.  Thus $d=3$, corresponding to $1/(d-1)=1/2$,
is included at that level.  These assertions do not identify the image
lattice over $\mathcal O=k[\theta^{\pm1}]_{(\theta-\zeta)}$.

At $\theta=\zeta$, the Etingof--Rains parameters are $(\zeta,\zeta)$ and
the Sergeev--Veselov parameters are $(\zeta,\zeta^{-1})$.
All specialized coefficients $a_j$ are nonzero, so that convention does
not exclude this family.  The Weil-generic theorem and
\cite[Remark~2.6]{EtingofRains}, which assumes roots distinct up to
inversion, do not give the specialization.
For $1\le\ell\le d$, direct specialization of the difference condition in
\cite[Proposition~3.5(i)]{EtingofRains} gives $A^{\rm sn}$ rather than
$A$.\footnote{Here we specialize the displayed difference equation in
\cite[Proposition~3.5(i), arXiv and accepted-manuscript versions]{EtingofRains}.
Its generation assertion is not invoked at root-of-unity parameters;
generation of the prescribed image is proved by the finite-output
certificates in this paper.}
Proposition~\ref{v8r8:mixed:confluence}, using
Theorems~\ref{v7:mixed:affine} and~\ref{v8:mixed:critical}, instead
identifies the saturated specialization with $A$; its extra compatibility
is \eqref{v7:mixed:jet}.

Root-of-unity enlargement of a wheel vanishing space occurs already in
\cite[Section~3.1]{FJMMTUnity}.  Its $(r_{\rm F},k_{\rm F})=(2,d-1)$
case uses a full $d$-cycle, not disjoint $(d-1)$-strings.
The one-resonance basis theorem of
\cite[Section~2, equation~(2.10)]{FJMMShifted}, the non-special generation
theorem \cite[Theorem~5.8]{SergeevVeselovMR}, and the nonroot operator
restrictions of \cite[Sections~3.2--3.4]{HLNR} do not supply the additional
specialization $\theta^d=1$.  Preservation of a difference algebra must
be separated from its generation.

The constant-coefficient limit $\theta=1$ gives $(x-u)f(x)^{d-1}$,
whose Cohen--Macaulayness is covered by
\cite[Theorem~2.1 and Proposition~2.2]{BrooknerCorwinEtingofSam}.
Its normalization is bijective on geometric points: the multiplicity
$(d-1)k+\epsilon$, $\epsilon\in\{0,1\}$, recovers both input counts at
each output root.  The cyclic image has $d$ preimages at a general point
of $D$, so the two affine images are not even abstractly isomorphic.
Nor is the cyclic algebra the rational quasi-invariant algebra of
\cite[Section~4]{SergeevVeselovCMS}: $Q_1=u-\sum_i v_i$ fails
$(\partial_{v_i}-(d-1)\partial_u)Q_1=0$ on $u=v_i$.
These constructions, the generic difference-kernel description, and
their general specialization mechanisms are precedents.  The additional
claims established here concern the saturated root-of-unity image lattice,
its embedded special fiber, and the projective conductor and defect data.

\paragraph{Ambient regularization and the prescribed image.}
Here $(q,t)=(q_\theta,\theta)$ denotes the standard Macdonald parameters,
not the division coordinate $t=f(u)$.  Root-of-unity Macdonald theory
is a relevant precedent, not excluded by our parameter choice.  Cherednik's Section~9.1 and Main
Theorem~9.1(i)--(ii) allow roots of unity and construct flat limits and
triangular bases of little generalized eigenspaces
\cite{CherednikNonsemisimple}.
Those ambient spaces must be distinguished from the image of a specified
restriction along the resonant curve $qt^{d-1}=1$.
Similarly, the modified bases of
\cite[Definition~3.3 and Theorem~3.14]{KasataniModified} are constructed
over the fraction field of a single specialization divisor, whereas
$\theta=\zeta$ imposes an additional resonance.
The displayed Uglov path $(q,t)=(\zeta p,\zeta p^\gamma)$ uses $\gamma>0$
\cite[Section~4.4]{UglovRootLimit}; our path formally gives
$\gamma=-1/(d-1)$.
Invariant-ideal and operator-restriction results also precede this work:
\cite[Theorem~3.3 and Remark~3.4]{FeiginSilantyev} identifies its type-$A$
ideals with the $r=2$ multi-wheel ideals, under the nonroot setting of
its Section~2.
A regular ambient basis or an invariant restriction kernel does not
itself identify the saturated image lattice.  The embedded specialization
used here is proved through the actual finite outputs; the following computation
records its integral free decomposition.  No exclusion of all possible
modified-basis descriptions is claimed.

\subsection{The integral image supplied by the finite-output certificate}
\label{r17:mixed:integral-lift}

Fix $\ell=d+1$ and $N=d^2$.  Write $X_i=U_{d+i}$,
$Y_0=U_{2d}$, $Z_i=U_{2d+i}$, and $W_0=U_{3d}$ as in the proof of
Theorem~\ref{v8r9:mixed:first-excluded}, retaining $t=f(u)$.
That certificate has the following all-weight consequence.  Let $\mathbf b=(\mathbf a,w,X_1,Y_0)$ and
\[
 \mathcal E=\{1\}\cup\{X_i:2\le i<d\}
  \cup\{Z_i:1\le i<d\}\cup\{W_0,X_2X_{d-1}\}.
\]
For each entry $b$ of $\mathbf b$ and each $e\in\mathcal E$, fix its
rational polynomial expression in $Q_1,\ldots,Q_{3d}$ supplied by the
successive recoveries in that theorem.  Define $\widehat b$ and
$\widehat e$ by substituting $Q_j(\theta)$ into those expressions.
These are lifts through the outputs, not the unchanged formulas in the
normalization coordinates.  With
$\widehat{\mathcal R}=\mathcal O[\widehat{\mathbf b}]$, one has
\[
 \mathcal A=\bigoplus_{e\in\mathcal E}
       \widehat{\mathcal R}\,\widehat e
   =\ker\mathcal D_\theta
   =\mathcal B\cap(\mathcal A\otimes_{\mathcal O}K).
\]
Since every displayed parameter and basis lift is expressed through
$Q_1(\theta),\ldots,Q_{3d}(\theta)$, one also has
\[
 \mathcal A=\mathcal O[Q_1(\theta),\ldots,Q_{3d}(\theta)].
\]
These $3d$ lifted outputs also have minimum cardinality as an
$\mathcal O$-algebra generating set of $\mathcal A$.  Indeed, any generating
set with fewer than $3d$ elements would reduce modulo $\pi$ to a
$k$-algebra generating set of $A$ with fewer than $3d$ elements, contradicting
the minimal-output calculation following
Theorem~\ref{v8r9:mixed:first-excluded}.  In particular, the consecutive
initial cutoff is sharp over the specified DVR $\mathcal O$.  This does not
assert the same minimum number of generators, or the same minimum cutoff,
for the generic fiber over $K$.
The parameter algebra is polynomial and the displayed module is free of
rank $2d$.  To verify this without assuming a generic basis theorem,
fix a root weight $n$.  The monomials
$\widehat{\mathbf b}^{\alpha}\widehat e$ of that weight reduce to the
basis of $M_n$ established by that certificate.  They are therefore independent over
$\mathcal O$: normalize a putative relation by the smallest valuation
of its coefficients and reduce modulo $\pi$.
Their span $L_n$ lies in $\mathcal A_n\subset\mathcal K_n$, and
Proposition~\ref{v8r10:mixed:certificate} identifies
$\mathcal K_n/\pi\mathcal K_n$ with $M_n$.  Hence
$\mathcal K_n=L_n+\pi\mathcal K_n$, so Nakayama gives
$L_n=\mathcal A_n=\mathcal K_n$.
The same reduction argument proves parameter independence.
Finally the kernel of a map to a torsion-free module is saturated,
which gives the intersection equality.  This is a consequence of the finite certificate, not an independent
generation proof from multi-wheel composition factors; its range remains
$\ell=d+1$.

\section*{Funding and declarations}

\noindent\textbf{Funding.}
This work was supported by the National Natural Science Foundation of China
(grant numbers 12571003 and 12501006) and the Basic and Applied Basic Research
Foundation of Guangdong Province (grant number 2024A1515010589).

\smallskip
\noindent\textbf{Declaration of competing interest.}
The author declares no known competing financial interests or personal
relationships that could have appeared to influence the work reported in this
paper.

\end{document}